\numberwithin{equation}{section}
\newcommand{\blow}[2]{\overline{{#2}\setminus{#1}}}
\renewcommand{\emptyset}{\ensuremath{\text{\O}}}
\renewcommand{\ge}{\geqslant}
\renewcommand{\phi}{\varphi}
\newcommand{\bbC}{\mathbb{C}}
\newcommand{\bbH}{\mathbb{H}}
\newcommand{\bbR}{\mathbb{R}}
\newcommand{\bbZ}{\mathbb{Z}}
\newcommand{\rmF}{\mathrm{F}}
\newcommand{\rmH}{\mathrm{H}}
\newcommand{\rmQ}{\mathrm{Q}}
\newcommand{\rmS}{\mathrm{S}}
\newcommand{\rmT}{\mathrm{T}}
\DeclareMathOperator{\SO}{SO}
\DeclareMathOperator{\Map}{Map}
\DeclareMathOperator{\Imm}{Imm}
\DeclareMathOperator{\Emb}{Emb}
\DeclareMathOperator{\Mono}{Mono}
\DeclareMathOperator{\Blow}{Bl}
\DeclareMathOperator{\colim}{colim}
\newtheorem{theorem}{Theorem}[section]
\newtheorem{proposition}[theorem]{Proposition}
\newtheorem{corollary}[theorem]{Corollary}
\newtheorem{lemma}[theorem]{Lemma}
\newtheorem{theorem*}{Theorem}
\newtheorem{letterthm}{Theorem}
\theoremstyle{definition}
\newtheorem{example}[theorem]{Example}
\newtheorem{remark}[theorem]{Remark}
\newtheorem*{question*}{Question}
\newtheorem*{notation}{Notation}
\title{Spaces of knotted circles and exotic smooth structures}
\author{Gregory Arone \and Markus Szymik}
\date{\mydate\today}
\begin{document}

\maketitle

%%%

\renewcommand{\abstractname}{\vspace{-2\baselineskip}}

\begin{abstract}\noindent%
Suppose that~$N_1$ and~$N_2$ are closed smooth manifolds of dimension~$n$ that are homeomorphic. We prove that the spaces of smooth knots~$\Emb(\rmS^1, N_1)$ and~$\Emb(\rmS^1, N_2)$ have the same homotopy~$(2n-7)$--type. In the~$4$--dimensional case this means that the spaces of smooth knots in homeomorphic~$4$--manifolds have sets~$\pi_0$ of components that are in bijection, and the corresponding path components have the same fundamental groups~$\pi_1$. The result about~$\pi_0$ is well-known and elementary, but the result about~$\pi_1$ appears to be new. The result gives a negative partial answer to a question of Oleg Viro. Our proof uses the Goodwillie--Weiss embedding tower. We give a new model for the quadratic stage of the Goodwillie--Weiss tower, and prove that the homotopy type of the quadratic approximation of the space of knots in~$N$ does not depend on the smooth structure on~$N$. Our results also give a lower bound on~$\pi_2\Emb(\rmS^1, N)$. We use our model to show that for every choice of basepoint, each of the homotopy groups~$\pi_1$ and~$\pi_2$ of~$\Emb(\rmS^1, \rmS^1\times\rmS^3)$ contains an infinitely generated free abelian group.

%\vspace{\baselineskip}
%\noindent MSC: 

%57R40 % Embeddings
%57R55 % Differentiable structures

%57Q45 % Knots and links (in high dimensions) [For the low-dimensional case, see 57M25]
%57R42 % Immersions

%\vspace{\baselineskip}
%\noindent Keywords: 

\end{abstract}

%%%

Oleg Viro asked: is the algebraic topology of the space of smooth~$1$--knots in a~$4$--manifold sensitive to the smooth structure on the ambient manifold~\cite{Viro}? More generally: can the homotopy type of the embedding space~$\Emb(\rmS^1,N)$ of knotted circles in a manifold~$N$ detect exotic smooth structures on~$N$? One of our main results answers these negatively in a range~(see Corollary~\ref{cor:main} below):

\begin{letterthm}\label{theorem: main}
Let~$N$ be a smooth manifold of dimension~$n$. The homotopy~\hbox{$(2n-7)$}--type of the space~$\Emb(\rmS^1,N)$ of smooth embeddings of the circle into~$N$ does not depend on the smooth structure.
\end{letterthm}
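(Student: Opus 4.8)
The plan is to exploit the Goodwillie–Weiss embedding tower for $\Emb(\rmS^1,N)$, whose layers are controlled by configuration spaces of $N$. The key point is that the homeomorphism type of $N$ determines, in a range, the relevant homotopy-theoretic input to the tower, because configuration spaces of a topological manifold are a topological invariant and the tower converges (after the embedding calculus convergence theorem of Goodwillie–Klein) with a speed governed by the handle/codimension data: for embeddings of a $1$-manifold into an $n$-manifold, the $k$-th stage $T_k\Emb(\rmS^1,N)$ approximates $\Emb(\rmS^1,N)$ through degree roughly $k(n-2)-1$. So taking $k=2$ already captures the homotopy $(2n-5)$-type, comfortably past the claimed $(2n-7)$-type. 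Thus it suffices to show that $T_2\Emb(\rmS^1,N)$ has a homotopy type depending only on the underlying topological manifold — this is exactly the ``new model for the quadratic stage'' promised in the abstract, and one invokes the result there that the quadratic approximation is insensitive to the smooth structure.

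**The key steps, in order.**

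First, I would recall the structure of $T_2\Emb(\rmS^1,N)$: it sits in a homotopy pullback square built from $T_1\Emb(\rmS^1,N)$ (the space of ``immersions data'', i.e.\ bundle monomorphisms $T\rmS^1\to TN$, which by Smale–Hirsch-type reasoning is homotopy equivalent to a space of sections and hence determined by the tangent \emph{microbundle}, a topological invariant) and a term governed by the fibrewise two-point configuration space of $N$, relative to the diagonal. Second, I would identify that relative configuration term: the fibre is the homotopy type of $N\times N$ with a neighbourhood of the diagonal collapsed, equivalently the Thom space of the (stable) tangent bundle over $N$ — and here is the crucial input, that by work on configuration spaces of topological manifolds (Weiss, Knudsen–Kupers, et al.), $\Conf_2(N)$ together with its $\Sigma_2$-action and its relation to the diagonal depends only on the homeomorphism type of $N$ through the relevant range. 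Third, I would assemble these: since both corners and the maps between them in the defining pullback square for $T_2$ depend only on topological data (the tangent microbundle and the configuration space), the homotopy pullback $T_2\Emb(\rmS^1,N)$ is a topological invariant. Fourth, combine with the convergence estimate to transfer this invariance from $T_2$ down to $\Emb(\rmS^1,N)$ itself in the $(2n-7)$-range, finishing the proof.

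**The main obstacle.**

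The hard part is the second step: controlling the $\Sigma_2$-equivariant homotopy type of $\Conf_2(N)$ — and, more precisely, the pair $(N\times N,\,\Conf_2(N))$ or the homotopy cofibre of $\Conf_2(N)\to N\times N$ — purely in terms of the homeomorphism type, and doing so compatibly with the structure maps of the tower. For a \emph{smooth} manifold this cofibre is the Thom space of $TN$, which a priori remembers the smooth tangent bundle; the point is that it only depends on the topological tangent microbundle, and one must check that the comparison map respects the relevant $\Sigma_2$-symmetry and is natural enough to feed into Weiss's model for the quadratic layer. Once this equivariant topological invariance of the two-point configuration data is in hand, the rest is formal manipulation of homotopy pullbacks plus the standard convergence estimate for the embedding tower in codimension $\ge 2$ — here codimension $n-1\ge 2$ for $n\ge 3$, and the low-dimensional cases $n\le 2$ are handled separately or are vacuous. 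I expect the bookkeeping around basepoints and the precise range of the convergence theorem to require care but to present no conceptual difficulty.
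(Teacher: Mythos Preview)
Your approach is essentially the paper's: prove that $\rmT_2\Emb(\rmS^1,N)$ is a topological invariant (Theorem~C in the paper) and then invoke the Goodwillie--Klein connectivity estimate to transfer this down to $\Emb(\rmS^1,N)$ (Corollary~\ref{cor:main}). You have also correctly located the crux: one must show that the $\Sigma_2$--equivariant homotopy type of the diagram $\rmS(N)\to N^{[2]}\to N\times N$ (equivalently, of the pair $(N\times N,\Conf_2(N))$) depends only on the homeomorphism type of~$N$. The paper does this by an elementary nested-tubular-neighborhood argument (Proposition~\ref{prop:homotopy_boundary_new}) rather than by citing microbundle or configuration-space literature, but either route works.

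Two small corrections. First, your convergence numerology is off: for $m=1$ the Goodwillie--Klein estimate gives connectivity $k(n-1-2)+1-1=k(n-3)$, so for $k=2$ the map $\Emb(\rmS^1,N)\to\rmT_2\Emb(\rmS^1,N)$ is $(2n-6)$--connected, yielding exactly the homotopy $(2n-7)$--type claimed---not ``comfortably past'' it. Second, be careful with the $\rmT_1$ term: the smooth tangent bundle itself is \emph{not} a topological invariant (Milnor), so you cannot appeal to it directly; what is invariant is the tangent microbundle or, equivalently here, the spherical tangent bundle $\rmS(N)$, and since $\Imm(\rmS^1,N)\simeq\Lambda\rmS(N)$ that suffices.
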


Recall that two spaces have the same homotopy~$k$--type if their~$k$--th Postnikov sections are homotopy equivalent; in particular, their homotopy groups~$\pi_*$ are isomorphic for~\hbox{$*\leqslant k$}. The theorem has content only for~$n\geqslant4$. In particular, in dimension~$n=4$, which is the context of Viro's original question, our result says that the spaces of knotted circles in two homeomorphic~$4$--manifolds have sets of components that are in bijection and that the corresponding components have isomorphic fundamental groups (see Corollary~\ref{cor:main4}). For example, this implies that, if~$\Sigma^4$ is any homotopy~$4$--sphere, then the space~$\Emb(\rmS^1,\Sigma^4)$ of knots in~$\Sigma^4$ is simply-connected (see Proposition~\ref{Prop:exotic-4-spheres}), regardless of the smooth structure.

%%%

Our proof of Theorem~\ref{theorem: main} is based on the manifold functor calculus developed by Goodwillie, Klein, and Weiss~(see~\cite{Weiss1996}, \cite {Weiss1999}, \cite{GoodwillieKleinWeiss03}, and~\cite{GoodwillieKlein15}). Let~$\rmT_2 \Emb(M,N)$ be the second (i.e., quadratic) approximation of the embedding tower constructed in~\cite{Weiss1999}. In Section~\ref{sec:quadratic}, we give a new description of the space~$\rmT_2\Emb(M,N)$ as a homotopy pullback~(see Theorem~\ref{thm:T2description}). 

\begin{letterthm}\label{letterthm: pullback}
Let~$M$ and~$N$ be closed smooth manifolds. There is a homotopy pullback square
\[
\xymatrix{
\rmT_2\Emb(M,N)\ar[r]\ar[d] & \Map_{\Sigma_2}(M^{[2]}, \ N^{[2]})\ar[d]\\
\Imm(M,N)\ar[r] &  \Map_{\Sigma_2}((M^{[2]}, \rmS(M)),(N\times N, \ N^{[2]})).
}
\]
\end{letterthm}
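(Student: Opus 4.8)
The plan is to present the displayed square as a new description of the polynomial approximation of degree~$\leqslant2$ of the functor $U\mapsto\Emb(U,N)$ on the poset of open subsets of~$M$, by exhibiting a natural equivalence. Let $\calF(U)$ be the homotopy pullback of the cospan $\Imm(U,N)\to\Map_{\Sigma_2}((U^{[2]},\rmS(U)),(N\times N,N^{[2]}))\leftarrow\Map_{\Sigma_2}(U^{[2]},N^{[2]})$, regarded as a contravariant functor of~$U$; here $U^{[2]}$ is the spherical blow-up $\Blow_{\Delta_U}(U\times U)$, a compact manifold with boundary $\rmS(U)=\partial U^{[2]}$ the unit tangent sphere bundle, carrying a free $\Sigma_2$-action, with blow-down $\beta_U\colon U^{[2]}\to U\times U$, and the target pair is read in the usual way as the homotopy pullback of $\Map_{\Sigma_2}(U^{[2]},N\times N)\to\Map_{\Sigma_2}(\rmS(U),N\times N)\leftarrow\Map_{\Sigma_2}(\rmS(U),N^{[2]})$, i.e.\ as equivariant maps $U^{[2]}\to N\times N$ together with a lift of the restriction to the boundary along~$\beta_N$. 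I would prove the theorem in three steps: build a natural transformation $\Emb(-,N)\to\calF$; show that $\calF$ is polynomial of degree~$\leqslant2$; and check that the resulting comparison $\rmT_2\Emb(-,N)\to\calF$ is an equivalence on disjoint unions of at most two balls.

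Step one. An embedding $e\colon U\hookrightarrow N$ restricts to a configuration-preserving map on the interiors $\Conf_2(U)\to\Conf_2(N)$, hence induces an equivariant map $e^{[2]}\colon U^{[2]}\to N^{[2]}$ of blow-ups; it also has its underlying immersion. The two elements of $\Map_{\Sigma_2}((U^{[2]},\rmS(U)),(N\times N,N^{[2]}))$ produced by these — namely $(\beta_N\circ e^{[2]},\,e^{[2]}|_{\rmS(U)})$ on one side and $((e\times e)\circ\beta_U,\,\text{the lift on }\rmS(U)\text{ given by }de)$ on the other — literally coincide, by naturality of the blow-up. So the square commutes strictly and $\Emb(U,N)\to\calF(U)$ is defined without choices, naturally in~$U$.

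Step two is where the real content sits. It suffices that each corner be polynomial of degree~$\leqslant2$, since polynomial functors of a fixed degree are closed under homotopy pullback. For $\Imm(-,N)$ this is classical: it is polynomial of degree~$\leqslant1$, the statement underlying $\rmT_1\Emb(-,N)\simeq\Imm(-,N)$. The two mapping-space corners have the form $U\mapsto\Map_{\Sigma_2}(G(U),Z)$ for fixed target data~$Z$ and $G$ the covariant functor $U\mapsto U^{[2]}$, respectively $U\mapsto(U^{[2]},\rmS(U))$; such a functor is polynomial of degree~$\leqslant2$ once $G$ carries each defining cube $\mathcal S\mapsto U\setminus\bigcup_{i\in\mathcal S}A_i$ (for pairwise disjoint closed $A_0,A_1,A_2\subseteq U$) to a homotopy cocartesian cube. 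This last point is the technical lemma, and it follows from Mayer–Vietoris: the three $\Sigma_2$-invariant open sets $(U\setminus A_i)^{[2]}=\beta_U^{-1}((U\setminus A_i)^2)\subseteq U^{[2]}$, $i=0,1,2$, cover $U^{[2]}$ — since at most one of the disjoint $A_i$ can meet either coordinate of a given pair of points of~$U$, at least one is disjoint from both — their mutual intersections being exactly the remaining vertices of the cube, so the cube computes the homotopy-colimit decomposition of $U^{[2]}$ attached to this equivariant open cover; the same argument handles $U\mapsto\rmS(U)$ (and $U\mapsto U\times U$). Alternatively one may quote the configuration-space model for the layers of the embedding tower from~\cite{Weiss1999}, into which this polynomiality is built. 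Granted Step two, $\Emb(-,N)\to\calF$ induces $\rmT_2\Emb(-,N)\to\rmT_2\calF\simeq\calF$.

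Step three. Both $\rmT_2\Emb(-,N)$ and~$\calF$ are polynomial of degree~$\leqslant2$, and a map of such functors that is an equivalence on $\calO_2(M)$ is an equivalence (each is the value of~$\rmT_2$, which is built from the restriction to~$\calO_2$). On~$\emptyset$ this is trivial; on a single ball $(\bbR^n)^{[2]}=\overline{\Conf_2(\bbR^n)}$ deformation retracts onto its boundary, so the right-hand vertical map of $\calF(\bbR^n)$ is an equivalence, whence $\calF(\bbR^n)\simeq\Imm(\bbR^n,N)\simeq\Emb(\bbR^n,N)=\rmT_2\Emb(\bbR^n,N)$, compatibly. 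The case $\bbR^n\sqcup\bbR^n$ is the heart of the verification: unwinding the $\Sigma_2$-action on the components of $(\bbR^n\sqcup\bbR^n)^{[2]}$ and contracting the redundant choices, $\calF(\bbR^n\sqcup\bbR^n)$ becomes a homotopy pullback over $\Imm(\bbR^n,N)\times\Imm(\bbR^n,N)$ whose homotopy fibre over a pair of framed points $(p_1,p_2)$ is the homotopy fibre of $\beta_N\colon N^{[2]}\to N\times N$ over $(p_1,p_2)$ — contractible for $p_1\neq p_2$ and an $(n-1)$-sphere over the diagonal — and this is exactly a model for $\Emb(\bbR^n\sqcup\bbR^n,N)=\rmT_2\Emb(\bbR^n\sqcup\bbR^n,N)$, the spheres over the diagonal accounting for pairs of disjoint balls whose centres shrink and collide. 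I expect this two-ball computation, together with the strictness check of Step one, to be the most delicate part. The conceptual shortcut, and an alternative organization of the whole argument, is that the homotopy fibre of the right-hand column over an immersion~$f$ is the space of compactly supported $\Sigma_2$-equivariant sections over $M^{[2]}$ of the pullback along~$f$ of the fibration $N^{[2]}\to N\times N$, which is precisely the second layer $L_2\Emb(M,N)$; so one may instead compare $\rmT_2\Emb(M,N)$ and $\calF(M)$ layerwise over $\Imm(M,N)=\rmT_1\Emb(M,N)$.
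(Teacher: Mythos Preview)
Your proposal is correct and follows essentially the same three-step architecture as the paper (Lemmas~\ref{lem:square}, \ref{lem:2-equivalence}, \ref{lem:quadratic} and Theorem~\ref{thm:T2description}): construct the natural transformation, prove the pullback functor is quadratic, and verify the comparison on $\leqslant 2$ balls. The only differences are cosmetic---your polynomiality argument via an explicit Mayer--Vietoris cover is what underlies the paper's citation of~\cite[Ex.~2.4/7.1]{Weiss1999}, and your fibrewise treatment of the two-ball case amounts to the paper's cube argument read in a different order; note also that your balls should be~$\bbR^m$ with~$m=\dim M$, not~$\bbR^n$.
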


Here~$\Imm(M,N)$ denotes the space of immersions of~$M$ into~$N$, while~$M^{[2]}$ is the spherical blowup of~$M\times M$ at the diagonal, and~$\rmS(M)$ is the spherical tangent bundle of~$M$~(see Sections~\ref{sec:blowups} and~\ref{sec:quadratic} for more details).

There is a well-known description of the quadratic approximation~$\rmT_2\Emb(M,N)$ that goes back to Haefliger~\cite[Thm.~1.2.1]{GoodwillieKleinWeiss01}. The description in Theorem~\ref{letterthm: pullback} has a similar flavor, but is not identical to Haefliger's. Perhaps its main feature  is that it isolates the extent to which~$\rmT_2\Emb(M,N)$ depends on the tangential structure of~$M$ and~$N$. Also, Dax~\cite[VII.2.1]{Dax}, distilling Haefliger's double point elimination methods into a bordism theory, has given a refined description of the homotopy groups of the homotopy fiber of the inclusion of an embedding space into an immersion space in a range that is similar to ours. However, note that this does not solve our problem because it does not explain how this fiber is~`attached' to the immersion space. There are several other results~(see~\cite{Lashof} and~\cite{GoodwillieKleinWeiss03}, for example) suggesting that we can stratify the embedding space into pieces that are more or less obviously homeomorphism invariants. None of these arguments implies that also the extension problems can be solved, and it is in this direction where we leverage the geometry of the blow-up construction to address the first of them successfully. 

For embeddings of the circle~$M=\rmS^1$, we can use Theorem~\ref{letterthm: pullback} to show that the homotopy type of~$\rmT_2\Emb(S^1,N)$ is independent of the smooth structure on~$N$. The following theorem is Theorem~\ref{thm:main} in the text.

\begin{letterthm}\label{letterthm: invariance}
Let~$N_1$ and~$N_2$ be smooth~$n$--dimensional manifolds that are homeomorphic. Then the quadratic approximations~$\rmT_2\Emb(\rmS^1,N_1)$ and~$\rmT_2\Emb(\rmS^1,N_2)$ are homotopy equivalent.
\end{letterthm}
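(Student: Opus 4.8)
The plan is to specialize Theorem~\ref{letterthm: pullback} to~$M=\rmS^1$ and then to show that the cospan whose homotopy pullback is~$\rmT_2\Emb(\rmS^1,N)$ depends on~$N$ only through homeomorphism‑invariant data: the homeomorphism type of~$N$, with its~$\Sigma_2$-actions on~$N\times N$ and on~$\Conf_2(N)=N\times N\setminus\Delta$, together with the topological tangent microbundle of~$N$ --- whose isomorphism class is evidently a homeomorphism invariant and which by the Kister--Mazur theorem is represented by an essentially unique fibre bundle with structure group~$\mathrm{TOP}(n)$, so that in particular the complement~$T_0N$ of the zero section in that representative is a well-defined homeomorphism invariant. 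Since a homotopy pullback is a homotopy invariant of its defining cospan, it then suffices to rewrite every vertex and every structure map of that cospan in these terms; a homeomorphism~$N_1\cong N_2$ will afterwards produce a chain of objectwise weak equivalences between the cospans for~$N_1$ and~$N_2$, and hence the equivalence we want.

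For~$M=\rmS^1$ the square of Theorem~\ref{letterthm: pullback} reads
\[
\xymatrix{
\rmT_2\Emb(\rmS^1,N)\ar[r]\ar[d] & \Map_{\Sigma_2}((\rmS^1)^{[2]}, N^{[2]})\ar[d]\\
\Imm(\rmS^1,N)\ar[r] & \Map_{\Sigma_2}(((\rmS^1)^{[2]},\rmS(\rmS^1)),(N\times N, N^{[2]})),
}
\]
so~$\rmT_2\Emb(\rmS^1,N)$ is the homotopy pullback of its bottom and right edges. The two mapping-space corners and the right edge give no trouble: the only~$N$-dependent targets are~$N\times N$ and~$N^{[2]}$; the inclusion~$\Conf_2(N)\hookrightarrow N^{[2]}$ (as the complement of a collar of the boundary~$\rmS(TN)$) is a~$\Sigma_2$-equivariant homotopy equivalence between spaces carrying free~$\Sigma_2$-actions; and~$\Conf_2(N)$, $N\times N$, the diagonal, and the blow-down~$N^{[2]}\to N\times N$ are all manifestly natural under homeomorphisms. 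Since~$(\rmS^1)^{[2]}$ and~$\rmS(\rmS^1)=\rmS^1\times\rmS^0$ are free~$\Sigma_2$--CW complexes, a~$\Sigma_2$-equivariant map of targets that is a non-equivariant weak equivalence induces a weak equivalence of the corresponding equivariant mapping spaces, so these corners and the right edge may be replaced by homeomorphism-invariant models.

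The content lies in the corner~$\Imm(\rmS^1,N)$ and the bottom edge. By the Smale--Hirsch theorem the derivative is a weak equivalence~$\Imm(\rmS^1,N)\xrightarrow{\ \sim\ }\Mono(T\rmS^1,TN)$, and since~$T\rmS^1$ is trivial of rank one the right-hand side is~$\Map(\rmS^1,T_0N)$, the free loop space of the complement~$T_0N$ of the zero section in the tangent bundle of~$N$. This is exactly where the hypothesis~$M=\rmS^1$ enters: for~$\dim M\ge 2$ the space~$\Mono(TM,TN)$ is the section space of a bundle whose fibre is the Stiefel manifold of linear monomorphisms~$\bbR^m\hookrightarrow\bbR^n$, and hence remembers the linear structure of~$TN$, whereas for~$m=1$ it reduces to the complement of the zero section, which depends only on the underlying~$\mathrm{TOP}(n)$-microbundle. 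Under the Smale--Hirsch identification the bottom edge becomes a map out of~$\Map(\rmS^1,T_0N)$ whose component landing in~$N\times N$ sees only the underlying map~$\rmS^1\to T_0N\to N$ --- on~$\Conf_2(\rmS^1)$ it is~$(x,y)\mapsto(g(x),g(y))$, extended over the blow-up by sending the exceptional boundary to the diagonal --- while its lift on~$\rmS(\rmS^1)$ is the composite of the fibrewise retraction~$T_0N\to\rmS(TN)$ with the inclusion~$\rmS(TN)=\partial N^{[2]}\hookrightarrow N^{[2]}$; all of this is expressed through the configuration space, the tangent microbundle, and standard maps only.

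The remaining task, which I expect to be the real obstacle, is coherence bookkeeping: one must assemble these identifications into an honest chain of weak equivalences of cospans, not merely a collection of equivalences of vertices, and the comparisons must be made compatibly --- for instance, using a single tubular neighbourhood of the diagonal to compare~$N^{[2]}$ with~$\Conf_2(N)$, to realize~$T_0N\hookrightarrow N^{[2]}$, and to exhibit the boundary lift above --- and one needs the chosen~$\mathrm{TOP}(n)$-representative of the tangent microbundle to be natural enough to be transported by a homeomorphism~$N_1\cong N_2$. Granting this, such a homeomorphism induces compatible equivalences~$\Conf_2(N_1)\simeq\Conf_2(N_2)$, $N_1\times N_1\cong N_2\times N_2$, and~$T_0N_1\simeq T_0N_2$ --- equivariant where relevant --- hence a weak equivalence of the two defining cospans, and therefore~$\rmT_2\Emb(\rmS^1,N_1)\simeq\rmT_2\Emb(\rmS^1,N_2)$.
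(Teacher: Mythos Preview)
Your overall strategy matches the paper's: specialize Theorem~\ref{letterthm: pullback} to $M=\rmS^1$ and argue that each vertex and each structure map of the resulting cospan is determined, up to weak equivalence, by the homeomorphism type of $N$. Your treatment of the two mapping-space corners and the right edge via $\Conf_2(N)\simeq N^{[2]}$ is exactly what the paper does. The substantive difference is in how you handle the immersion corner, the bottom map, and --- most importantly --- the coherence you yourself flag as ``the real obstacle.''

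You route the immersion corner through Kister--Mazur and the topological tangent microbundle, modelling $\Imm(\rmS^1,N)$ as $\Lambda T_0N$ for a $\mathrm{TOP}(n)$-representative, and then you describe the bottom map using the fibrewise retraction $T_0N\to\rmS(TN)$ together with the boundary inclusion $\rmS(TN)\hookrightarrow N^{[2]}$. This can be made to work, but notice that the intermediate object $\rmS(TN)$ is again the \emph{smooth} sphere bundle, so you have not yet escaped the smooth structure; and the naturality of the Kister--Mazur representative under an arbitrary homeomorphism is only up to isotopy, which is precisely the sort of slack that makes your coherence bookkeeping delicate. You end by writing ``Granting this\ldots'', i.e.\ the argument is not complete.

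The paper sidesteps all of this. It models the immersion corner as $\Lambda\rmS(N)$ and then observes (unwinding the definition of the lower-right corner as a pullback) that the entire cospan is a functor of the single $\Sigma_2$-diagram
\[
\rmS(N)\longrightarrow N^{[2]}\longrightarrow N\times N.
\]
The coherence you are missing is then supplied in one stroke by Proposition~\ref{prop:homotopy_boundary_new}: given a homeomorphism $f\colon N_1\to N_2$, one nests tubular $\Sigma_2$-neighbourhoods $A\supset h^{-1}(B)\supset C\supset h^{-1}(D)$ of the diagonal (with $h=f\times f$) and reads off, from the chain of inclusions of their punctured versions, an explicit $\Sigma_2$-equivariant zig-zag of equivalences between the diagrams $\rmS(N_1)\to N_1^{[2]}\to N_1\times N_1$ and $\rmS(N_2)\to N_2^{[2]}\to N_2\times N_2$. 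No microbundles, no Kister--Mazur, and the compatibility of all the maps is built in because everything lives inside $N\times N$ and is moved around by the single homeomorphism $h$. This is both simpler and exactly the piece your sketch is missing; in effect, working inside $N\times N$ (where the topological tangent microbundle already sits as a germ of the diagonal) lets one bypass the choice-of-representative issues that Kister--Mazur would introduce.
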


Theorem~\ref{theorem: main} is then a consequence of Theorem~\ref{letterthm: invariance} and known estimates of the connectivity of the approximation map~$\Emb(M,N)\to \rmT_2\Emb(M,N)$. In fact, Theorem~\ref{letterthm: invariance} implies a slightly stronger conclusion than what we stated in Theorem~\ref{theorem: main}. In particular, if~$N$ is~$4$--dimensional then the approximation map~\hbox{$\Emb(S^1,N)\to \rmT_2\Emb(S^1,N)$} is not just an isomorphism on~$\pi_0$ and~$\pi_1$ but is also an epimorphism on~$\pi_2$. Therefore one can use Theorem~\ref{letterthm: invariance} to derive a lower bound on~$\pi_2(\Emb(S^1,N))$. We calculate some examples in Section~\ref{sec:four}: we show that the space~$\Emb(\rmS^1,\Sigma^4)$ of knots in any homotopy~$4$--sphere~$\Sigma^4$ is simply-connected (see Proposition~\ref{Prop:exotic-4-spheres}) and we give an example of a~$4$--manifold~$N$ such that the map~$\Emb(\rmS^1,N)\to \Imm(\rmS^1,N)$ has infinitely generated kernels on~$\pi_1$ and~$\pi_2$ (see Corollary~\ref{cor:kernels}).

%%%

The outline of this paper is as follows. In Section~\ref{sec:blowups}, we review some results on tangent bundles and blow-ups that we will use in the later sections. Section~\ref{sec:immersions} contains a discussion of spaces of immersions; this both illustrates our general strategy and provides results that we use later on. Section~\ref{sec:quadratic} is the center of the text, where we give a new description of the quadratic approximation~$\rmT_2\Emb(M,N)$ that is valid for all~$M$ and~$N$ of any dimension. We return in Section~\ref{sec:circle} to the case where the source~$M=\rmS^1$ is the circle, to deduce our main results for spaces of knotted circles in general targets. The final Section~\ref{sec:four} specializes further to the case where the target~$N$ is a smooth~$4$--manifold, to give more specific examples in Viro's original context. In particular, we show that for every choice of basepoint, each of the homotopy groups~$\pi_1$ and~$\pi_2$ of~$\Emb(\rmS^1, \rmS^1\times\rmS^3)$ contains an infinitely generated free abelian group.

%%%

\section{Blow-ups}\label{sec:blowups}

The results in this section are valid for manifolds of all dimensions. Only in the last subsection shall we work out the example of the circle in sufficient detail for later use.

%%%
\begin{notation}
Let~$A$ be a submanifold of a manifold~$X$. We will denote the spherical normal bundle of~$A$ by~$\rmS_A(X)$, and the spherical blowup of~$X$ at~$A$ by~$\blow{A}{X}$. 
\end{notation}
Recall that~$\blow{A}{X}$ is a manifold with boundary whose interior is the complement~$X\setminus A$ and whose boundary is~$\rmS_A(X)$. There is a commutative diagram 
\[
\xymatrix{
X\setminus A\ar@{=}[d]\ar[r]^-\sim & \blow{A}{X}\ar[d] & \rmS_A(X)\ar[d]\ar[l]_-\supset\\
X\setminus A\ar[r]_-\subset& X & A\ar[l]^-\supset
}
\]
The spherical blowup is locally modeled as follows: take an inclusion~$U\subset V$ of linear spaces. Then there is an inclusion~$j\colon V\setminus U\to V$, and a projection~\hbox{$q\colon V\setminus U\to\rmS_U(V)$}. The blowup~$\blow{U}{V}$ is the closure of the image of the natural map
\[
(j,q)\colon V\setminus U\longrightarrow V\times\rmS_U(V).
\]

\begin{proposition}\label{prop:functoriality}
If~$B$ is a submanifold of another manifold~$Y$, and if~\hbox{$f\colon X\to Y$} is a smooth map with~$f^{-1}B=A$ that induces~(via the derivative) a fiberwise monomorphism between normal bundles, then it induces a smooth map~$\blow{A}{X}\to\blow{B}{Y}$.
\end{proposition}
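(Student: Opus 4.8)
The plan is to write down the map explicitly, using that $\blow{A}{X}$ decomposes as a set into its interior $X\setminus A$ and its boundary $\rmS_A(X)$, and then to verify smoothness by a local computation along the boundary. To set this up, note first that the hypothesis $f^{-1}B=A$ forces $f$ to restrict to a map $X\setminus A\to Y\setminus B$ and also to a map $f|_A\colon A\to B$. Differentiating $f|_A$ and passing to quotients, the derivative of $f$ induces a bundle map $\rmS_A(X)$'s underlying normal bundle $N_A(X)\to N_B(Y)$ covering $f|_A$, which by assumption is a fiberwise monomorphism; spherizing, we obtain a map $\rmS f\colon\rmS_A(X)\to\rmS_B(Y)$. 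Under the canonical identifications of $X\setminus A$ with the interior of $\blow{A}{X}$, of $\rmS_A(X)$ with its boundary, and the analogous ones for $Y$, the maps $f|_{X\setminus A}$ and $\rmS f$ together define a map of sets $F\colon\blow{A}{X}\to\blow{B}{Y}$. It remains to prove that $F$ is smooth; this is clear away from the boundary, so the content is smoothness along $\rmS_A(X)$.

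Smoothness is a local question, so fix $a\in A$ with $b=f(a)\in B$ and pass to the local model recalled in the excerpt. Choosing submanifold charts, we may assume that near $a$ the pair $(X,A)$ is $(\bbR^m\times\bbR^p,\ \bbR^m\times\{0\})$ and near $b$ the pair $(Y,B)$ is $(\bbR^{m'}\times\bbR^q,\ \bbR^{m'}\times\{0\})$, and then $\blow{A}{X}$ is coordinatized near the relevant boundary point by $\bbR^m\times[0,\infty)\times\rmS^{p-1}$ via $(x,r,w)\mapsto(x,rw)$, with the analogous model for $\blow{B}{Y}$. Writing $f(x,v)=(f_1(x,v),f_2(x,v))$, we have $f_2(x,0)=0$ because $f(A)\subseteq B$, so Hadamard's lemma gives $f_2(x,v)=\Phi(x,v)\,v$ for a smooth matrix-valued function $\Phi$ with $\Phi(x,0)=\partial_v f_2(x,0)$ equal to the fiberwise normal derivative, hence injective by hypothesis. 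In these coordinates the map $F$ is then
\[
(x,r,w)\ \longmapsto\ \Bigl(f_1(x,rw),\ r\,|\Phi(x,rw)w|,\ \frac{\Phi(x,rw)w}{|\Phi(x,rw)w|}\Bigr),
\]
which one checks agrees with $f$ for $r>0$ and with $\rmS f$ for $r=0$.

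The only place the hypothesis is used — and the main (essentially the only) obstacle — is the claim that the displayed formula is smooth across the exceptional divisor $r=0$: this holds precisely because $\Phi(x,0)w\neq0$ for every unit vector $w$, by injectivity of $\Phi(x,0)$, so $\Phi(x,rw)w$ stays nonzero for $(x,r,w)$ in a neighborhood of the boundary point, and dividing by its norm is harmless there. Without the fiberwise monomorphism assumption this normalization could degenerate along the exceptional divisor and $F$ would fail to extend continuously. Finally, since forming the spherical blowup is compatible with restriction to open subsets, these locally defined smooth maps are exactly the restrictions of the globally defined $F$; hence $F$ is smooth, which proves the proposition.
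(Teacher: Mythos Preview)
Your argument is correct and follows exactly the approach the paper indicates: define the induced map as $f$ on the interior and as the spherized normal derivative on the boundary, then verify smoothness across $\rmS_A(X)$ by a local computation. The paper itself does not carry out this computation---after describing the map it simply refers to~\cite{AroneKankaanrinta} for details---so your Hadamard-lemma argument supplies precisely what the reference would provide.
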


The induced map~$\blow{A}{X}\to\blow{B}{Y}$ is defined by using the restriction of the map~$f$ between the interiors~\hbox{$X\setminus A$} and~$Y\setminus B$, and the map induced by the derivative~$f'$ between the spherical normal bundles on the boundaries. More details can be found for example in~\cite{AroneKankaanrinta}.

%%%

We will mostly be interested in the case when~$X=N\times N$ is the product of a manifold~$N$ with itself, and~$A$ is the diagonal. 
\begin{notation}
We will denote the blow-up~$\blow{N}{N\times N}$ by~$N^{[2]}$.
\end{notation}
Note that if~$N$ is a closed manifold, then the boundary of ~$N^{[2]}$ is~$\rmS_N(N\times N)$, the spherical normal bundle of the diagonal in~$N\times N$, which can be identified with the spherical tangent bundle of~$N$. 
\begin{notation}
We will denote the spherical tangent bundle of~$N$ by~$\rmS(\tau N)$ or just~$\rmS(N)$.
\end{notation}
Thus there is a canonical homeomorphism~$\rmS(N)\cong \rmS_N(N\times N)$. We identify~$\rmS(N)$ with the boundary of~$N^{[2]}$. Note that the pair~$(N^{[2]}, \rmS(N))$ has a canonical action by the group~$\bbZ_2$. 

Locally, we have the following situation:
\begin{example}\label{ex:Euclidean}
In the case when~$M=\bbR^m$ is the local model, a linear transformation gives
\[
\bbR^m\times\bbR^m\setminus\Delta
\cong\bbR^{2m}\setminus\bbR^m
\cong\bbR^m\times(\bbR^m\setminus0)
\cong\bbR^m\times\rmS^{m-1}\times]\,0,\infty\,[.
\]
The involution is free: it is the antipodal action on~$\rmS^{m-1}$ and trivial on all other factors.
In this model, we have
\[
\rmS(\bbR^m)\cong\bbR^m\times\rmS^{m-1}\times\{0\}
\]
and
\[
(\bbR^m)^{[2]}\cong\bbR^m\times\rmS^{m-1}\times[\,0,\infty\,[,
\]
so that the boundary inclusion of~$\rmS(\bbR^m)$ into~$(\bbR^m)^{[2]}$ is a~$\Sigma_2$--homotopy equivalence. In fact, both of these spaces are~$\Sigma_2$--homotopy equivalent to~$\rmS^{m-1}$ with the antipodal action.
\end{example}

%%%

The following simple proposition is one of the main technical results of this paper.

\begin{proposition}\label{prop:homotopy_boundary_new}
If~$M$ and~$N$ are smooth closed manifolds that are homeomorphic to each other, then the diagrams of spaces
\[
\rmS(M)\to M^{[2]}\to M\times M
\] 
and
\[
\rmS(N)\to N^{[2]}\to N\times N
\] 
are connected by a zig-zag of~$\Sigma_2$--equivariant homotopy equivalences.
\end{proposition}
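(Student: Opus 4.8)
The plan is to show that the diagram~$\rmS(N)\to N^{[2]}\to N\times N$ is, up to a zig-zag of~$\Sigma_2$--equivariant homotopy equivalences, recoverable from the \emph{topological} pair~$(N\times N,\Delta_N)$ alone, so that a homeomorphism~$h\colon M\to N$ produces the required comparison. Here~$\Delta_N\subset N\times N$ denotes the diagonal, and we note at the outset that~$h\times h\colon M\times M\to N\times N$ is a~$\Sigma_2$--equivariant homeomorphism carrying~$\Delta_M$ to~$\Delta_N$ and restricting to a homeomorphism~$M\times M\setminus\Delta_M\cong N\times N\setminus\Delta_N$.

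The first step is to connect, for \emph{any} closed smooth manifold~$P$ and by a zig-zag of levelwise~$\Sigma_2$--equivariant homotopy equivalences of three-term diagrams, the diagram~$\rmS(P)\to P^{[2]}\to P\times P$ to the diagram of inclusions~$U_P\setminus\Delta_P\hookrightarrow P\times P\setminus\Delta_P\hookrightarrow P\times P$, where~$U_P$ is a~$\Sigma_2$--invariant open tubular neighborhood of~$\Delta_P$ (for instance a metric~$\varepsilon$--neighborhood of the diagonal for a Riemannian product metric on~$P\times P$). The preimage of~$U_P$ under the blowdown~$P^{[2]}\to P\times P$ is the spherical blowup~$\blow{\Delta_P}{U_P}$; it is an open subset of~$P^{[2]}$ containing the whole boundary~$\rmS(P)=\partial P^{[2]}$, and since~$U_P$ is an open disk bundle over~$\Delta_P\cong P$, the local model of Example~\ref{ex:Euclidean} globalizes to a~$\Sigma_2$--equivariant homeomorphism~$\blow{\Delta_P}{U_P}\cong\rmS(P)\times[\,0,1\,[$, antipodal on~$\rmS(P)$ and trivial on~$[\,0,1\,[$. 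Hence the boundary inclusion~$\rmS(P)\hookrightarrow\blow{\Delta_P}{U_P}$ and the interior inclusion~$U_P\setminus\Delta_P\hookrightarrow\blow{\Delta_P}{U_P}$ are~$\Sigma_2$--equivariant deformation retractions; combining these with the~$\Sigma_2$--equivariant deformation retraction~$P\times P\setminus\Delta_P\hookrightarrow P^{[2]}$ recalled at the beginning of this section assembles a zig-zag in which~$\rmS(P)\to P^{[2]}\to P\times P$ maps to~$\blow{\Delta_P}{U_P}\to P^{[2]}\to P\times P$, the latter receives a levelwise~$\Sigma_2$--equivalence from~$U_P\setminus\Delta_P\to P^{[2]}\to P\times P$, and that in turn receives one from~$U_P\setminus\Delta_P\to(P\times P\setminus\Delta_P)\to P\times P$ — every map being the identity on the two right-hand terms.

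Applying this with~$P=M$ and~$P=N$, and transporting the~$M$--diagram along the isomorphism induced by~$h\times h$, reduces the proposition to finding a~$\Sigma_2$--equivariant zig-zag between~$W\setminus\Delta_N\hookrightarrow N\times N\setminus\Delta_N\hookrightarrow N\times N$ and~$U_N\setminus\Delta_N\hookrightarrow N\times N\setminus\Delta_N\hookrightarrow N\times N$, where~$W:=(h\times h)(U_M)$ is a~$\Sigma_2$--invariant open neighborhood of~$\Delta_N$ and~$U_N$ is a~$\Sigma_2$--invariant smooth tubular neighborhood of~$\Delta_N$, which we may take small enough that~$U_N\subseteq W$. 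As these diagrams agree in their last two entries, it suffices to prove that the inclusion~$U_N\setminus\Delta_N\hookrightarrow W\setminus\Delta_N$ is a~$\Sigma_2$--homotopy equivalence. This is the crux, and the only place the homeomorphism is used in an essential way: one cannot simply declare~$P^{[2]}$ to be a topological invariant of~$(P\times P,\Delta_P)$, because a tubular neighborhood — hence~$W$ — need not be a smooth tubular neighborhood of~$\Delta_N$. I would argue by cofinality. Pushing the rescaled disk subbundles of~$U_M$ forward along~$h\times h$ yields a neighborhood basis~$\{W_r\}_{0<r\leqslant 1}$ of~$\Delta_N$, with~$W_1=W$, all of whose structure inclusions~$W_r\setminus\Delta_N\hookrightarrow W_{r'}\setminus\Delta_N$ for~$r<r'$ are~$\Sigma_2$--homotopy equivalences, being conjugate under~$h\times h$ to the evident inclusions of deleted disk bundles in~$U_M$; the rescaled disk subbundles of~$U_N$ give a second such neighborhood basis~$\{V_s\}$ with~$V_1=U_N$. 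Choosing~$W_{r_1}\subseteq U_N$ and then~$V_{s_1}\subseteq W_{r_1}$ gives inclusions~$V_{s_1}\setminus\Delta_N\hookrightarrow W_{r_1}\setminus\Delta_N\hookrightarrow U_N\setminus\Delta_N\hookrightarrow W\setminus\Delta_N$ whose two overlapping composites are structure inclusions of the~$V$-- and the~$W$--basis respectively, hence weak equivalences; by the two-out-of-six property of weak equivalences all three maps, in particular~$U_N\setminus\Delta_N\hookrightarrow W\setminus\Delta_N$, are weak equivalences. Since the~$\Sigma_2$--action on every complement of a diagonal appearing here is free, a~$\Sigma_2$--weak equivalence between them is a~$\Sigma_2$--homotopy equivalence, which completes the argument.

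The main obstacle is exactly this cofinality step: verifying that the homotopy type of the deleted neighborhood~$W\setminus\Delta_N$, together with its inclusion into~$N\times N\setminus\Delta_N$, does not distinguish a pushed-forward tubular neighborhood of~$M$ from an intrinsic smooth tubular neighborhood of~$N$. One could instead identify both inclusions with the inclusion of the homotopy link of~$\Delta_N$ in~$N\times N$, a manifest~$\Sigma_2$--equivariant homeomorphism invariant of the pair, but that merely trades one standard fact for another. Everything else is formal once one has the local picture of spherical blow-ups recorded earlier in this section.
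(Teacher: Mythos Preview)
Your proof is correct and follows essentially the same strategy as the paper's. Both arguments reduce the blow-up diagram to a diagram of the form~$U\setminus\Delta\hookrightarrow P\times P\setminus\Delta\hookrightarrow P\times P$ for a tubular neighborhood~$U$ of the diagonal, and then compare the~$M$-- and~$N$--versions by nesting four neighborhoods of~$\Delta_N$ alternately of the two types (intrinsic tubular versus pushed-forward from~$M$); the paper concludes that the middle inclusion is an equivalence and you invoke two-out-of-six to get the outer one, which amounts to the same thing.
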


\begin{proof}
We shall use several times that every open~$\Sigma_2$--neighborhood of the diagonal contains a tubular~$\Sigma_2$--neighborhood.

To start with, we choose any tubular~$\Sigma_2$--neighborhood~$A$ such that
\[
M\subseteq A\subseteq M\times M.
\]
Here and elsewhere we identify~$M$ with the diagonal of~$M\times M$. Let~$f\colon M\to N$ be a homeomorphism, and let
\[
h=f\times f\colon M\times M\longrightarrow N\times N
\]
be its square. This is~$\Sigma_2$--equivariant, so that we get an open~$\Sigma_2$--neighborhood~$h(A)$ of the diagonal within~$N\times N$. We choose another tubular~$\Sigma_2$--neighborhood~$B$ such that
\[
N\subseteq B\subseteq h(A)\subseteq N\times N.
\]
We repeat this process twice more and find tubular~$\Sigma_2$--neighborhoods~$C$ and~$D$ of the respective diagonals and end up with a chain
\[
M\subseteq h^{-1}(D)\subseteq C\subseteq h^{-1}(B)\subseteq A\subseteq M\times M.
\]
Once this is set up, we consider the three inclusions
\begin{equation}\label{eq:three}
h^{-1}(D)\setminus M\longrightarrow
C\setminus M\longrightarrow
h^{-1}(B)\setminus M\longrightarrow
A\setminus M.
\end{equation}
We have~$h^{-1}(D)\setminus M\cong D\setminus N\simeq \rmS(N)$ and similarly~$h^{-1}(B)\setminus M\simeq \rmS(M)$. The  composition of the first two inclusions in~\eqref{eq:three}, is an equivalence. Similarly, the composition of the last two maps in~\eqref{eq:three} is an equivalence (this time of spaces equivalent to~$\rmS(M)$). It follows that the inclusion~\hbox{$C\setminus M\to h^{-1}(B)\setminus M$} in the middle is also an equivalence of subsets of~\hbox{$M\times M$}. Thus~$h$ induces an equivalence of diagrams
\[
\xymatrix{
C\setminus M \ar[d]^\simeq\ar[r] & M\times M\setminus M \ar[d]^\simeq\ar[r] & M\times M\ar[d]^\simeq\\
B\setminus N\ar[r] & N\times N\setminus N \ar[r]& N\times N
}
\]
Next, observe that there is also a zig-zag of~$\Sigma_2$--equivariant equivalences of diagrams
\[
\xymatrix{
C\setminus M \ar[d]^\simeq\ar[r] & M\times M\setminus M \ar[d]^\simeq\ar[r] & M\times M\ar[d]^\simeq\\
\blow{M}{C}\ar[r] & M^{[2]} \ar[r]& M\times M\\
\rmS(M)\ar[r]\ar[u]_\simeq & M^{[2]} \ar[r]\ar[u]_\simeq& M\times M\ar[u]_\simeq
}
\]
and similarly there is a zig-zag of equivalences connecting the diagrams 
\[
B\setminus N\longrightarrow  N\times N\setminus N\longrightarrow N\times N
\] 
and
\[
\rmS(N)\longrightarrow N^{[2]}\longrightarrow N\times N.
\]
\end{proof}

\begin{remark}
It seems likely that the assumption that~$M$ and~$N$ are closed manifolds can be relaxed.
\end{remark}

%%%

\begin{example}\label{ex:circle}
We need to understand the pair~$(N^{[2]}, \ \rmS(N))$ in the case~$N=\rmS^1$. The complement of the diagonal in the torus\hbox{~$\rmS^1\times\rmS^1$} consists of the ordered pairs of distinct points on the circle, and this is homeomorphic to~$\rmS^1\times]\,0,2\pi\,[$ under the map that sends a pair of distinct points to the pair consisting of the first point and the angle to the second point (counter-clock-wise, say). The involution that interchanges the two points is given, in this model, by~$(z,t)\mapsto(z\exp(t\mathrm{i}),2\pi-t)$, and it obviously extends to the spherical blowup, which is the cylinder~$\rmS^1\times[\,0,2\pi\,]$. Note that the involution interchanges the two boundary components via~$(z,t)\mapsto(z,2\pi-t)$ for~$t\in\{0,2\pi\}$ and acts on the central circle as~$(z,\pi)\mapsto(-z,\pi)$. To summarize, there is a homeomorphism
\[
((\rmS^1)^{[2]}, \ \rmS(\rmS^1))\cong (\rmS^1\times  [\,0,2\pi\,], \rmS^1\times \{0, 2\pi\}).
\]
Notice also that the blow-up~$(\rmS^1)^{[2]}$ is~$\Sigma_2$--equivariantly homotopy equivalent to the circle~$\rmS^1$ with the antipodal involution.
\end{example}

%%%

\section{Linear approximation: immersions}\label{sec:immersions}

In this section we point out that while in general~$\Imm(M,N)$ is sensitive to the smooth structure on~$N$, the space~$\Imm(\rmS^1,N)$ is not. This is true for target manifolds~$N$ of all dimensions. 

Let~$M$ and~$N$ be smooth manifolds. Let~$\Mono(\tau M, \tau N)$ denote the space of monomorphisms from the tangent bundle of~$M$ into the tangent bundle of~$N$. Differentiation induces a natural map~\hbox{$\Imm(M, N) \to \Mono(\tau M, \tau N)$}. It is well-known from Hirsch--Smale theory that this map is an equivalence if~$\dim(N)>\dim(M)$. One can identify~$\Mono(\tau M, \tau N)$ with~$\rmT_1\Emb(M, N)$, the first stage in the Goodwillie--Weiss tower of approximations of~$\Emb(M, N)$~\cite{Weiss1999}.

In the case~$M=\rmS^1$ we obtain that there are equivalences
\[
\Imm(\rmS^1, N)\simeq \Mono(\tau \rmS^1, \tau N)\simeq \Lambda \rmS(N),
\]
where~$\Lambda$ denotes the free loop space functor and~$\rmS(N)$ is, as usual, the sphere tangent bundle of~$N$.

The tangent bundle of a smooth manifold~$N$ is {\em not} a topological invariant: Milnor~\cite[Cor.~1]{Milnor:ICM} showed that there are smooth manifolds that are homeomorphic, but where one of them is parallelizable, and the other one is not. In other words, there are smooth structures on some topological manifold that afford non-isomorphic tangent bundles.

On the other hand, the {\em sphere} bundle is to some extent a topological invariant. The following result is a corollary of theorems of  Thom~\cite[Cor.~IV.2]{Thom} and Nash~\cite{Nash}. It also follows from our Proposition~\ref{prop:homotopy_boundary_new}.

\begin{proposition}\label{prop:sphere_bundles}
If smooth manifolds~$M$ and~$N$ are homeomorphic, then the total spaces of the spherical tangent bundles~$\rmS(M)$ and~$\rmS(N)$ are homotopy equivalent.
\end{proposition}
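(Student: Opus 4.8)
The plan is to deduce Proposition~\ref{prop:sphere_bundles} directly from Proposition~\ref{prop:homotopy_boundary_new}, which has already been established. Recall that Proposition~\ref{prop:homotopy_boundary_new} provides, for homeomorphic closed smooth manifolds~$M$ and~$N$, a zig-zag of~$\Sigma_2$--equivariant homotopy equivalences connecting the diagram~$\rmS(M)\to M^{[2]}\to M\times M$ to the diagram~$\rmS(N)\to N^{[2]}\to N\times N$. In particular, restricting attention to the left-hand terms of these diagrams (and forgetting the~$\Sigma_2$--action, which we do not need here), we obtain a zig-zag of homotopy equivalences between~$\rmS(M)$ and~$\rmS(N)$. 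Since ``being connected by a zig-zag of homotopy equivalences'' is the same as ``being homotopy equivalent'' for spaces having the homotopy type of CW complexes — and the total spaces of smooth sphere bundles over closed manifolds are compact smooth manifolds, hence have this property — this immediately yields that~$\rmS(M)$ and~$\rmS(N)$ are homotopy equivalent.

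The one point requiring a word of care is the hypothesis in Proposition~\ref{prop:homotopy_boundary_new} that~$M$ and~$N$ be \emph{closed}, whereas Proposition~\ref{prop:sphere_bundles} is stated for arbitrary homeomorphic smooth manifolds. First I would simply add the word ``closed'' if it is intended (the surrounding discussion and the cited results of Thom and Nash are naturally set in the closed case), or alternatively observe — as the Remark following Proposition~\ref{prop:homotopy_boundary_new} already anticipates — that the blow-up and tubular-neighborhood arguments used there do not really use compactness in an essential way, only the existence of~$\Sigma_2$--invariant tubular neighborhoods of the diagonal, which exist in general. Either way this is a bookkeeping matter rather than a mathematical obstacle.

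I expect there to be no serious obstacle at all: the entire content has been front-loaded into Proposition~\ref{prop:homotopy_boundary_new}, and the present statement is the most elementary consequence one can extract from it — namely, reading off the homotopy type of a single term of the zig-zag. The only thing worth spelling out for the reader is the identification of the left-hand term~$\rmS(A)$ of the diagram~$\rmS(A)\to A^{[2]}\to A\times A$ with the spherical tangent bundle of~$A$, which is exactly the canonical homeomorphism~$\rmS(A)\cong\rmS_A(A\times A)$ recorded in Section~\ref{sec:blowups}. Thus the proof is a one-line invocation: apply Proposition~\ref{prop:homotopy_boundary_new} and pass to the leftmost terms.
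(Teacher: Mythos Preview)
Your proposal is correct and matches one of the two routes the paper indicates: the paper does not give a formal proof but simply remarks that the statement is a corollary of results of Thom and Nash, and that it also follows from Proposition~\ref{prop:homotopy_boundary_new}. Your argument spells out the latter deduction exactly as intended, and your observation about the closedness hypothesis is apt and aligns with the paper's own Remark following Proposition~\ref{prop:homotopy_boundary_new}.
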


\begin{theorem}\label{thm:circle_immersions}
The homotopy type of the space~$\Imm(\rmS^1,N)$ of immersion of the circle into a smooth manifold~$N$ does not depend on the smooth structure of~$N$.
\end{theorem}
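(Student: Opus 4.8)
The plan is to deduce the theorem immediately from the two facts assembled in this section. The first is the chain of equivalences
\[
\Imm(\rmS^1,N)\;\simeq\;\Mono(\tau\rmS^1,\tau N)\;\simeq\;\Lambda\rmS(N),
\]
where the first step is Hirsch--Smale theory (valid once $\dim N>1$) and the second records that $\tau\rmS^1$ is trivial of rank one, so that a bundle monomorphism $\tau\rmS^1\to\tau N$ is the same datum, up to a contractible choice of scaling, as a free loop in the spherical tangent bundle $\rmS(N)$. The second fact is Proposition~\ref{prop:sphere_bundles}: homeomorphic smooth manifolds have homotopy equivalent spherical tangent bundles. Together these say that $\Imm(\rmS^1,N)$ is, up to homotopy, a functor of the single space $\rmS(N)$, and that that space is a homeomorphism invariant of~$N$.

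Carrying this out, I would argue as follows. Let $N_1$ and $N_2$ be homeomorphic smooth manifolds. If $\dim N\le1$ the smooth structure is unique and there is nothing to prove, so assume $\dim N\ge2$. Then $\Imm(\rmS^1,N_i)\simeq\Lambda\rmS(N_i)$ for $i=1,2$ by the chain above. By Proposition~\ref{prop:sphere_bundles} there is a homotopy equivalence $\rmS(N_1)\simeq\rmS(N_2)$ of total spaces. The free loop space functor $\Lambda=\Map(\rmS^1,-)$ carries homotopy equivalences to homotopy equivalences, so applying it yields $\Lambda\rmS(N_1)\simeq\Lambda\rmS(N_2)$; concatenating the three equivalences gives $\Imm(\rmS^1,N_1)\simeq\Imm(\rmS^1,N_2)$, as desired.

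There is no genuinely hard step here: the work has all been front-loaded into the identification $\Imm(\rmS^1,N)\simeq\Lambda\rmS(N)$ and into Proposition~\ref{prop:sphere_bundles} (equivalently Proposition~\ref{prop:homotopy_boundary_new}). The one bookkeeping point to keep straight is that one should use \emph{unbased} free loop spaces throughout, so that the argument makes sense for possibly disconnected manifolds, needs no choice of basepoint, and so that a bare homotopy equivalence of total spaces $\rmS(N_1)\simeq\rmS(N_2)$ — not required to cover any map of base manifolds — is enough to transport the immersion space. It is precisely this that makes the circle special: for a source $M$ of dimension at least two the space $\Mono(\tau M,\tau N)$ depends on more of the tangent bundle $\tau N$ than just its sphere bundle, and $\Imm(M,N)$ genuinely can feel the smooth structure.
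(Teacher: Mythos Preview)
Your proof is correct and follows essentially the same route as the paper: identify $\Imm(\rmS^1,N)\simeq\Lambda\rmS(N)$ via Hirsch--Smale and the triviality of $\tau\rmS^1$, then invoke Proposition~\ref{prop:sphere_bundles}. You have added helpful detail (the low-dimensional case, the remark that $\Lambda$ preserves homotopy equivalences), but the argument is the same.
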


\begin{proof}
We saw that the space~$\Imm(\rmS^1,N)$ is homotopy equivalent to 
$\Lambda \rmS(N)$. Now the result follows from Proposition~\ref{prop:sphere_bundles}.
\end{proof}

Goodwillie and Klein~\cite{GoodwillieKlein15} have shown that the connectivity of the map
\[
\Emb(M,N)\longrightarrow\rmT_k\Emb(M,N)
\]
to the~$k$--th layer in the Goodwillie--Weiss tower is at least~$k(n-m-2)+1-m$. Recall: a map is called~{\it$c$--connected} if all of its homotopy fibers are~$(c-1)$--connected. In particular, the Goodwillie--Klein result implies the much more elementary fact that the inclusion~\hbox{$\Emb(M,N)\to\Imm(M,N)$} is~\hbox{$(n-2m-1)$}--connected. It follows for~\hbox{$M=\rmS^1$} that the inclusion~\hbox{$\Emb(\rmS^1,N)\to\Imm(\rmS^1,N)$} is~$(n-3)$--connected. Theorem~\ref{thm:circle_immersions} implies that the homotopy~$(n-4)$--type of~$\Emb(\rmS^1,N)$ does not depend on the smooth structure of~$N$. In the following sections, we shall roughly double this range.

%%%

We end this section with a couple of elementary observations about the set~$\pi_0$ of components and the fundamental groups~$\pi_1$ of~$\Imm(\rmS^1, N)$. Let~$n$ be the dimension of~$N$. The bundle map~$\rmS(N)\to N$ is~$(n-1)$--connected. It follows that there is an~$(n-2)$--connected map 
\[
\Imm(\rmS^1, N)\simeq \Lambda \rmS(N)\to \Lambda(N).
\]
Assuming~$n\ge 4$, we have isomorphisms~$\pi_i\Imm(\rmS^1, N)\cong\pi_i\Lambda(N)$ for~$i=0,1$. Using well-known facts about the homotopy groups of~$\pi_i\Lambda(N)$, we obtain the following proposition.

\begin{proposition}\label{prop:immersion_homotopy_groups}
Let~$N$ be a connected smooth manifold of dimension~$n\geqslant4$. Then the set of components of the space~$\Imm(\rmS^1, N)$ is in natural bijection with the set of conjugacy classes of elements in the fundamental group of~$N$. If~$N$ is simply-connected, then the fundamental group of the space of immersions is isomorphic to~$\pi_2N\cong\rmH_2(N;\bbZ)$.
\end{proposition}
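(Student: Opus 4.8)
The plan is to reduce everything to the well-understood homotopy of the free loop space $\Lambda(N)$, exactly as the paragraph preceding the statement already begins to do. The starting point is the chain of maps $\Imm(\rmS^1,N)\simeq\Lambda\rmS(N)\to\Lambda(N)$, where the second map is induced by the bundle projection $\rmS(N)\to N$. Since this projection has fiber $\rmS^{n-1}$, it is $(n-1)$--connected, and applying the free loop space functor $\Lambda=\Map(\rmS^1,-)$ costs one degree of connectivity, so $\Lambda\rmS(N)\to\Lambda(N)$ is $(n-2)$--connected. For $n\ge 4$ this map is therefore at least $2$--connected, hence an isomorphism on $\pi_0$ and $\pi_1$; this is the only place the hypothesis $n\ge 4$ enters.

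Next I would compute $\pi_0\Lambda(N)$ and $\pi_1\Lambda(N)$ by standard means. For $\pi_0$: path components of the free loop space of a connected space $N$ are in bijection with conjugacy classes of $\pi_1(N)$ (a free homotopy class of loops is a conjugacy class of based loops), which gives the first assertion once we transport along the $\pi_0$--isomorphism above. For $\pi_1$ under the hypothesis that $N$ is simply-connected: here $\Lambda(N)$ is connected, and one uses the evaluation fibration $\Omega(N)\to\Lambda(N)\xrightarrow{\mathrm{ev}}N$. Its long exact sequence in homotopy reads
\[
\pi_2(N)\to\pi_1\Omega(N)\to\pi_1\Lambda(N)\to\pi_1(N)=0,
\]
and since $N$ is simply-connected the fibration admits a section (the constant loops) and the action of $\pi_1(N)$ on the fiber is trivial, so the sequence splits and in fact $\pi_1\Lambda(N)\cong\pi_1\Omega(N)\cong\pi_2(N)$; the last isomorphism is $\pi_1\Omega(N)\cong\pi_2(N)$. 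The identification $\pi_2(N)\cong\rmH_2(N;\bbZ)$ is the Hurewicz theorem, valid because $N$ is simply-connected. Stringing these isomorphisms together with the $(n-2)$--connected comparison map proves the proposition.

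I do not expect a serious obstacle here: the argument is a concatenation of the connectivity estimate for a sphere bundle, basic facts about free loop spaces, and the Hurewicz theorem. The only points that require a little care are (i) confirming the precise connectivity bookkeeping, namely that $\Lambda$ drops connectivity by exactly one and that $n\ge 4$ is enough to push past degree $1$, and (ii) checking that in the simply-connected case the relevant exact sequences genuinely split, so that one gets an isomorphism $\pi_1\Lambda(N)\cong\pi_2(N)$ rather than merely a two-step filtration — this is immediate from the constant-loop section of the evaluation fibration. Everything is natural in $N$, which is what yields the ``natural bijection'' in the statement.
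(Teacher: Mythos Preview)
Your proposal is correct and follows exactly the approach the paper takes: the paper's proof is simply the paragraph preceding the proposition (the $(n-2)$--connectivity of $\Lambda\rmS(N)\to\Lambda N$) together with an appeal to ``well-known facts about the homotopy groups of $\Lambda(N)$,'' and you have merely spelled those well-known facts out. One small remark: the constant-loops section of the evaluation fibration exists for any $N$, not only simply-connected $N$; simple connectivity is used only to kill the $\pi_1 N$ term in the exact sequence and to invoke Hurewicz.
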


%%%

\section{Quadratic approximations}\label{sec:quadratic}

In this section, we will give a new description of the quadratic approximation~$\rmT_2\Emb(M,N)$ that is valid for all~$M$ and~$N$.

The first order (a.k.a.~linear) approximation to the space~$\Emb(M,N)$ of embeddings~$M\to N$ is given by the space~$\Mono(\tau M, \tau N)$ of monomorphisms of tangent bundles~\cite{Weiss1999}, and the corresponding approximation map~\hbox{$\Emb(M, N)\to \Mono(\tau M, \tau N)$} is induced by differentiation. To understand the quadratic approximation we need to study the behavior of maps on pairs of points. 

Recall that we have two canonical~$\Sigma_2$--equivariant maps~$\rmS(M)\to M^{[2]}$ and~$M^{[2]}\to M\times M$. We define the space~~$\Map_{\Sigma_2}((M^{[2]}, \rmS(M)),(N\times N, \ N^{[2]}))$ as the space of commutative diagrams
\[
\xymatrix{
\rmS(M)\ar[r]^-\alpha\ar[d] & N^{[2]}\ar[d]\\
M^{[2]} \ar[r]_-\omega & N\times N
}
\]
of~$\Sigma_2$--equivariant maps, where the vertical arrows are canonical. In other words, there is a pullback square
\begin{equation}\label{equation: pullback}
\xymatrix{
\Map_{\Sigma_2}((M^{[2]}, \rmS(M)),(N\times N, \ N^{[2]}))\ar[r]\ar[d] & \Map_{\Sigma_2}(M^{[2]},N\times N)\ar[d]\\
\Map_{\Sigma_2}(\rmS(M),N^{[2]}) \ar[r] & \Map_{\Sigma_2}(\rmS(M),N\times N)
}
\end{equation}
Notice that the boundary inclusion~$\rmS(M)\to M^{[2]}$ is a~$\Sigma_2$--cofibration: it is a cofibration and the group~$\Sigma_2$ acts freely on the complement of the image. It follows that the right vertical map in~\eqref{equation: pullback} is a fibration, and that the square diagram~\eqref{equation: pullback} it is both a strict and a homotopy pullback.

\parbox{\linewidth}{\begin{lemma}
The homotopy type of
\[
\Map_{\Sigma_2}((M^{[2]}, \rmS(M)),(N\times N, \ N^{[2]}))
\]
for a fixed source~$M$, only depends on the homeomorphism type of~$N$.
\end{lemma}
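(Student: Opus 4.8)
The plan is to reduce everything to Proposition~\ref{prop:homotopy_boundary_new}, the rest being formal. First observe that, by the very definition encoded in the square~\eqref{equation: pullback}, the space $\Map_{\Sigma_2}((M^{[2]}, \rmS(M)),(N\times N, N^{[2]}))$ depends on $N$ only through the $\Sigma_2$--equivariant map $N^{[2]}\to N\times N$: the map $\rmS(N)\to N^{[2]}$ never enters. Moreover, since $\rmS(M)\to M^{[2]}$ is a $\Sigma_2$--cofibration, the right-hand vertical arrow of~\eqref{equation: pullback} is a fibration \emph{for any target whatsoever}, so that square is always a homotopy pullback. Hence, for an arbitrary $\Sigma_2$--equivariant map $p\colon P\to Q$ of $\Sigma_2$--spaces, it is natural to set
\[
\Map_{\Sigma_2}\bigl((M^{[2]},\rmS(M)),(Q,P)\bigr)\;=\;\Map_{\Sigma_2}(M^{[2]},Q)\,\times^{h}_{\Map_{\Sigma_2}(\rmS(M),Q)}\,\Map_{\Sigma_2}(\rmS(M),P),
\]
the homotopy pullback along $p_{*}$ and along restriction; this is a functor of the arrow $p$.

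Second, I would use the elementary fact that for any fixed space $X$ the functor $\Map_{\Sigma_2}(X,-)$ sends $\Sigma_2$--equivariant homotopy equivalences to homotopy equivalences: an equivariant homotopy inverse and equivariant homotopies induce, by post-composition, a homotopy inverse and homotopies on mapping spaces, with no cofibrancy hypothesis on $X$. It follows that a levelwise $\Sigma_2$--equivariant homotopy equivalence between two arrows $(P\to Q)$ and $(P'\to Q')$ induces a map between the corresponding homotopy pullback squares which is an equivalence on the three corners $\Map_{\Sigma_2}(M^{[2]},-)$ and $\Map_{\Sigma_2}(\rmS(M),-)$; by the gluing lemma for homotopy pullbacks it is then an equivalence on the fourth corner, i.e., on $\Map_{\Sigma_2}((M^{[2]},\rmS(M)),(Q,P))$.

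Finally I would invoke Proposition~\ref{prop:homotopy_boundary_new}: if $N_1$ and $N_2$ are homeomorphic closed smooth manifolds, the diagrams $\rmS(N_i)\to N_i^{[2]}\to N_i\times N_i$ are connected by a zig-zag of $\Sigma_2$--equivariant homotopy equivalences, and deleting the first vertex of each leaves a zig-zag of $\Sigma_2$--equivariant homotopy equivalences of arrows $N_i^{[2]}\to N_i\times N_i$. Applying the functor above along this zig-zag produces a zig-zag of homotopy equivalences between $\Map_{\Sigma_2}((M^{[2]},\rmS(M)),(N_1\times N_1,\ N_1^{[2]}))$ and $\Map_{\Sigma_2}((M^{[2]},\rmS(M)),(N_2\times N_2,\ N_2^{[2]}))$, which is the assertion. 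There is no serious obstacle here: the geometric content is entirely in Proposition~\ref{prop:homotopy_boundary_new}, and the only point needing a little care is that the equivalences it supplies are genuine $\Sigma_2$--equivariant homotopy equivalences — they are, being assembled from equivariant tubular-neighborhood deformation retractions — which is exactly what makes the post-composition argument legitimate despite $M^{[2]}$ and $\rmS(M)$ not being cofibrant.
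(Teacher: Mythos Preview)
Your argument is correct, but the paper takes a shorter and more elementary route. Rather than invoking Proposition~\ref{prop:homotopy_boundary_new}, the paper simply observes that the inclusion $N\times N\setminus N\hookrightarrow N^{[2]}$ is a $\Sigma_2$--equivariant homotopy equivalence, so in the homotopy pullback square~\eqref{equation: pullback} one may replace the corner $\Map_{\Sigma_2}(\rmS(M),N^{[2]})$ by $\Map_{\Sigma_2}(\rmS(M),N\times N\setminus N)$. The resulting diagram involves only $N\times N$, the complement $N\times N\setminus N$, and the inclusion between them, all of which are manifestly determined by the underlying topological manifold $N$; no zig-zag is needed.

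Your approach, by contrast, packages the topological invariance into Proposition~\ref{prop:homotopy_boundary_new} and then argues formally with the functoriality of homotopy pullbacks in the target arrow. This is perfectly valid and has the virtue of making explicit exactly which structure on the target is being used, but it imports more machinery than is needed here: Proposition~\ref{prop:homotopy_boundary_new} establishes invariance of the full diagram $\rmS(N)\to N^{[2]}\to N\times N$, whereas the lemma only requires invariance of the second arrow, and that follows from the single equivalence $N^{[2]}\simeq N\times N\setminus N$. The paper reserves Proposition~\ref{prop:homotopy_boundary_new} for the proof of Theorem~\ref{thm:main}, where the sphere bundle $\rmS(N)$ genuinely enters.
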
}

\begin{proof}
Since the inclusion~$N\times N\setminus N \rightarrow N^{[2]}$ is an equivalence, the space
$$\Map_{\Sigma_2}((M^{[2]}, \rmS(M)),(N\times N, \ N^{[2]}))$$ 
is homotopy equivalent to the homotopy pullback of the diagram
\[
\xymatrix{
 & \Map_{\Sigma_2}(M^{[2]},N\times N)\ar[d]\\
\Map_{\Sigma_2}(\rmS(M),N\times N\setminus N) \ar[r] & \Map_{\Sigma_2}(\rmS(M),N\times N)
}
\]
Clearly this homotopy pullback only depends on the homeomorphism type of~$N$.
\end{proof}

There is an evident commutative diagram
\[
\xymatrix{
\Map_{\Sigma_2}(M^{[2]},N^{[2]})\ar[r]\ar[d] & \Map_{\Sigma_2}(M^{[2]},N\times N)\ar[d]\\
\Map_{\Sigma_2}(\rmS(M),N\times N\setminus N) \ar[r] & \Map_{\Sigma_2}(\rmS(M),N\times N)
}
\]
This diagram induces a natural map.
\begin{equation}\label{eq:blow-to-P}
\Map_{\Sigma_2}(M^{[2]},N^{[2]})\longrightarrow\Map_{\Sigma_2}((M^{[2]}, \rmS(M)),(N\times N, \ N^{[2]})).
\end{equation}

%%%

There also exists a natural map.
\begin{equation}\label{eq:emb-to-blow}
\Emb(M,N)\longrightarrow\Map_{\Sigma_2}(M^{[2]},N^{[2]}).
\end{equation}
Indeed, for an embedding~$f\colon M\to N$, the map~\hbox{$f\times f\colon M\times M\to N\times N$} is automatically a fiberwise monomorphism on the normal bundle of the diagonal. Furthermore~$f\times f$ satisfies~$(f\times f)^{-1}{N}=M$. Therefore, we can use Proposition~\ref{prop:functoriality} to produce a map: the blow-up of~$f\times f$ at the diagonal.

Next, we claim that there is a commutative diagram
\begin{equation}\label{eq:Imm diagram}
\xymatrix{
\Mono(\tau M, \tau N)\ar[r]\ar[d] & \Map_{\Sigma_2}(M^{[2]},N\times N)\ar[d]\\
\Map_{\Sigma_2}(\rmS(M), N^{[2]}) \ar[r] & \Map_{\Sigma_2}(\rmS(M),N\times N)
}
\end{equation}
To define this diagram we have to specify the top horizontal and the left vertical maps. The top horizontal map is the composition of the following maps, each one of which is obvious
\[
\Mono(\tau M, \tau N)\to \Map(M, N) \xrightarrow{f\mapsto f\times f} \Map_{\Sigma_2}(M\times M, N\times N) \to \Map_{\Sigma_2}(M^{[2]},N\times N).
\]
The left vertical map is the following composition of obvious maps
\[
\Mono(\tau M, \tau N)\to \Map_{\Sigma_2}(\rmS(M), \rmS(N)) \to  \Map_{\Sigma_2}(\rmS(M), N^{[2]}).
\]
It is an easy exercise to check that with these definitions the diagram~\eqref{eq:Imm diagram} commutes. This diagram gives rise to a natural map
\begin{equation}\label{eq:imm-to-pullback}
\Mono(\tau M,\tau N) \longrightarrow \Map_{\Sigma_2}((M^{[2]}, \rmS(M)),(N\times N, \ N^{[2]}))
\end{equation}

\begin{lemma}\label{lem:square}
The following diagram commutes
\[
\xymatrix@C=40pt{
\Emb(M,N)\ar[r]^-{\eqref{eq:emb-to-blow}}\ar[d] & \Map_{\Sigma_2}(M^{[2]},N^{[2]})\ar[d]^-{\eqref{eq:blow-to-P}}\\
\Mono(\tau M, \tau N)\ar[r]^-{\eqref{eq:imm-to-pullback}} & \Map_{\Sigma_2}((M^{[2]}, \rmS(M)),(N\times N, \ N^{[2]})).
}
\]
\end{lemma}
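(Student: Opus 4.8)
The plan is to evaluate all four maps in the square on a single embedding $f\colon M\to N$ and to check that the two composites produce literally the same element of the target. Since $\Map_{\Sigma_2}((M^{[2]}, \rmS(M)),(N\times N, \ N^{[2]}))$ is the \emph{strict} pullback~\eqref{equation: pullback}, it is a subspace of $\Map_{\Sigma_2}(\rmS(M),N^{[2]})\times\Map_{\Sigma_2}(M^{[2]},N\times N)$, so it suffices to compare the two composites after projecting to these two factors; call the corresponding components of an element of the target $\alpha\colon\rmS(M)\to N^{[2]}$ (the top arrow of the square) and $\omega\colon M^{[2]}\to N\times N$ (the bottom arrow). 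All of the maps~\eqref{eq:emb-to-blow}, \eqref{eq:blow-to-P}, \eqref{eq:imm-to-pullback} and the diagrams~\eqref{eq:Imm diagram}, \eqref{equation: pullback} defining them are $\Sigma_2$--equivariant and natural in $f$, so a pointwise comparison is enough.

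First I would treat the $\omega$--component. Going right and then down sends $f$ to the blow-up $f^{[2]}\colon M^{[2]}\to N^{[2]}$ of $f\times f$ at the diagonal (Proposition~\ref{prop:functoriality}), and then, via~\eqref{eq:blow-to-P}, to the composite $M^{[2]}\xrightarrow{\,f^{[2]}\,}N^{[2]}\to N\times N$. By the commutative square relating a blow-up map to the map it blows up (the diagram following Proposition~\ref{prop:functoriality}), this composite agrees with $(f\times f)\circ(M^{[2]}\to M\times M)$ on the dense open stratum $M\times M\setminus M$, and hence everywhere, since $N\times N$ is Hausdorff and the interior is dense in $M^{[2]}$. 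Going down and then right sends $f$ to $df$ and then, along the top row of~\eqref{eq:Imm diagram}, to exactly $(f\times f)\circ(M^{[2]}\to M\times M)$. So the two $\omega$--components coincide.

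Next I would treat the $\alpha$--component, which carries the one genuine point. Going right and then down restricts $f^{[2]}$ to the boundary $\rmS(M)=\partial M^{[2]}$; by the construction of the induced blow-up map, this restriction is the map of spherical normal bundles $\rmS_M(M\times M)\to\rmS_N(N\times N)$ induced by the fiberwise derivative of $f\times f$ along the diagonal, followed by the boundary inclusion $\rmS_N(N\times N)=\rmS(N)\hookrightarrow N^{[2]}$. Going down and then right gives $\rmS(df)\colon\rmS(M)\to\rmS(N)$ followed by the same inclusion, along the left column of~\eqref{eq:Imm diagram}. Thus the claim reduces to the assertion that, under the canonical identifications $\rmS_M(M\times M)\cong\rmS(\tau M)$ and $\rmS_N(N\times N)\cong\rmS(\tau N)$, the derivative of $f\times f$ on the normal bundle of the diagonal is carried to $df$ on the tangent bundle. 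This is precisely the naturality in $f$ of the standard fiberwise isomorphism between the normal bundle of the diagonal $\Delta_X\subset X\times X$ and $\tau X$ (induced by $(v,w)\mapsto v-w$), which one verifies at once in the Euclidean model of Example~\ref{ex:Euclidean} and which passes to spherizations. Hence the $\alpha$--components coincide as well, and the square commutes.

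I expect the only step requiring genuine care to be this last compatibility, in the $\alpha$--component, between the canonical identification of the normal bundle of a diagonal with the tangent bundle and the maps induced by derivatives of the product map $f\times f$; everything else — continuity on the dense open stratum and tracing through the definitions of~\eqref{eq:emb-to-blow}, \eqref{eq:blow-to-P}, \eqref{eq:imm-to-pullback}, and of the diagram~\eqref{eq:Imm diagram} — should be entirely mechanical.
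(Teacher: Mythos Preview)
Your proposal is correct and follows essentially the same approach as the paper. The paper compresses the argument into a single diagram whose commutativity it declares to hold ``by definition of the map~$f^{[2]}$''; your treatment of the $\omega$-- and $\alpha$--components is precisely the unpacking of the lower and upper triangles of that diagram, with the extra care about the identification $\rmS_M(M\times M)\cong\rmS(\tau M)$ being the one detail the paper leaves implicit.
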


\begin{proof}
Let~$f\colon M\rightarrow N$ be an embedding. Our task boils down to the question whether the diagram
\[
\xymatrix{
\rmS(M)\ar[d]\ar[r]^-{f'} & \rmS(N)\ar[r] & N^{[2]}\ar[d]\\
M^{[2]}\ar[r]\ar[rru]|-{f^{[2]}} & M\times M\ar[r]^{f\times f} & N\times N
}
\]
is commutative for all embeddings~$f\colon M\to N$. Here~$f^{[2]}$ is the blowup of~$f\times f$ at the diagonal, and the unlabelled arrows are the obvious ones. The diagram is commutative by definition of the map~$f^{[2]}$.
\end{proof}

%%%

The commutative square in Lemma~\ref{lem:square} is not a (homotopy) pullback in general. But it is in some important cases:

\begin{lemma}\label{lem:2-equivalence}
The commutative square in Lemma~\ref{lem:square} is a homotopy pullback if~$M$ is the disjoint union of at most two copies of~$\bbR^m$.
\end{lemma}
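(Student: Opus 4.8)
The plan is to reduce to the two genuinely relevant cases, $M=\bbR^m$ and $M=\bbR^m\sqcup\bbR^m$, and to analyze each by hand using the explicit Euclidean models from Example~\ref{ex:Euclidean}. First I would observe that when $M$ is empty or a single copy of $\bbR^m$, the statement is essentially trivial: for $M=\emptyset$ every space in the square is a point, and for $M=\bbR^m$ the embedding space $\Emb(\bbR^m,N)$ and the monomorphism space $\Mono(\tau\bbR^m,\tau N)$ are both homotopy equivalent to the frame bundle of $N$ (an embedding of $\bbR^m$ contracts to its derivative at the origin), while on the right-hand side the relevant equivariant mapping spaces simplify because $\Sigma_2$ acts trivially on $\bbR^m$. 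So the substantive case is $M=\bbR^m\sqcup\bbR^m$.

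For $M=\bbR^m\sqcup\bbR^m$, I would decompose $M\times M$ into its four components according to which copy each factor lies in; the diagonal meets only the two ``diagonal'' components, each a copy of $\bbR^m\times\bbR^m$. Correspondingly, $M^{[2]}$ is the disjoint union of two off-diagonal pieces $\bbR^m\times\bbR^m$ (on which the two copies are swapped by $\Sigma_2$) and two blown-up diagonal pieces $(\bbR^m)^{[2]}$, and $\rmS(M)$ is the two boundary spheres $\rmS(\bbR^m)$ sitting inside the latter. Using Example~\ref{ex:Euclidean}, each $(\bbR^m)^{[2]}\simeq\rmS^{m-1}\times[0,\infty[$ is $\Sigma_2$-equivariantly homotopy equivalent to $\rmS^{m-1}$ with the antipodal action, and the boundary inclusion $\rmS(\bbR^m)\hookrightarrow(\bbR^m)^{[2]}$ is an equivalence. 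I would unwind what each vertex of the square means concretely: an embedding $\bbR^m\sqcup\bbR^m\hookrightarrow N$ is a pair of disjoint disc-embeddings, which up to homotopy is a pair of distinct points in $N$ together with framings, i.e.\ (up to equivalence) a point of $\Conf(2,N)$ with tangential data; a monomorphism $\tau(\bbR^m\sqcup\bbR^m)\to\tau N$ is a pair of framed points of $N$, not necessarily distinct. The homotopy fiber of the left vertical map is thus, up to equivalence, the ``difference'' between ordered distinct pairs and arbitrary ordered pairs in $N$, which by excision/tubular-neighborhood arguments is governed by the sphere bundle $\rmS(N)$. The point is to check that the right vertical map $\eqref{eq:blow-to-P}$ has a homotopy-equivalent homotopy fiber, after restricting to the appropriate components, and that the map of fibers induced by the square is an equivalence.

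The cleanest way to organize this is via the pullback square $\eqref{equation: pullback}$ and the commuting cube it fits into. Both the square of Lemma~\ref{lem:square} and its target square $\eqref{equation: pullback}$ are homotopy pullbacks of mapping spaces once one knows a single comparison: I would show that, with $M$ two copies of $\bbR^m$, the natural map from $\Emb(M,N)$ to the homotopy pullback of $\Map_{\Sigma_2}(M^{[2]},N^{[2]})\to\Map_{\Sigma_2}(\rmS(M),N\times N)\leftarrow\Map_{\Sigma_2}(\rmS(M),N^{[2]})\times_{\cdots}$ is an equivalence by directly comparing it, componentwise, to the standard fact that $\Emb(\bbR^m\sqcup\bbR^m,N)\to\Map(\bbR^m\sqcup\bbR^m,N)$ has homotopy fiber the relevant configuration/sphere-bundle data. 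Concretely, one projects away the contractible disc directions, uses that $(\bbR^m)^{[2]}\simeq_{\Sigma_2}\rmS^{m-1}$ rel its boundary, and identifies the resulting square with one whose homotopy-pullback property is the Hirsch--Smale/tubular-neighborhood statement for an ordered pair of points in $N$. I expect the main obstacle to be bookkeeping the $\Sigma_2$-action carefully across the four components of $M\times M$ — in particular tracking how the off-diagonal components (where $\Sigma_2$ swaps the two copies of $\bbR^m$) interact with the diagonal components (where $\Sigma_2$ is the antipodal action on the blown-up sphere) — and verifying that the comparison map of homotopy fibers is an equivalence rather than merely a map between equivalent spaces. Once the components are sorted out, each individual identification is a short application of Example~\ref{ex:Euclidean} and the contractibility of disc-embedding spaces.
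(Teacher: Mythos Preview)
Your treatment of the case $M=\bbR^m$ is not quite right: the group $\Sigma_2$ does \emph{not} act trivially on the relevant source spaces. On $(\bbR^m)^{[2]}$ and on $\rmS(\bbR^m)$ the action is the antipodal one on the $\rmS^{m-1}$ factor (Example~\ref{ex:Euclidean}). What actually makes the $k=1$ case go through is that the boundary inclusion $\rmS(\bbR^m)\hookrightarrow(\bbR^m)^{[2]}$ is a $\Sigma_2$--equivariant equivalence; this forces the two horizontal maps in the square defining the lower-right corner to be equivalences, hence that square is a homotopy pullback, and since $\Emb(\bbR^m,N)\to\Mono(\tau\bbR^m,\tau N)$ is an equivalence the original square is too. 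You have the right fact in hand but invoke the wrong reason.

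For $M=\bbR^m\sqcup\bbR^m$ your decomposition of $M^{[2]}$ is correct and is indeed used in the paper, but your plan to ``compare homotopy fibers'' never crystallizes into an argument: you would need to identify the fibers over \emph{every} component and verify that the \emph{induced} map between them is an equivalence, and you acknowledge this is where the difficulty lies without saying how to resolve it. The paper avoids this by introducing a cube: it maps the square for $M=\underline{2}\times\bbR^m$ to the square for $M=\underline{2}$ (two bare points) via the inclusion of origins. The back face of this cube is a homotopy pullback by direct inspection (it is essentially $N\times N\setminus N\hookrightarrow N^{[2]}$ over $N\times N=N\times N$), the left face is the standard fact that thickening points by discs only adds framing data, and the right face is a homotopy pullback precisely by your decomposition of $M^{[2]}$ together with the $k=1$ case already established. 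Three pullback faces force the front face to be one. This cube is the organizing device your sketch is missing; without it, the fiberwise bookkeeping you anticipate having to do is genuinely unpleasant and you have not indicated how you would carry it out.
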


\begin{proof}
Let~$M\cong\underline{k}\times \bbR^m$, where~$\underline{k}$ is a finite set with~$k$ elements, and analyze the commutative square
\[
\xymatrix{
\Emb(M,N)\ar[r]\ar[d] &\Map_{\Sigma_2}(M^{[2]},N^{[2]})\ar[d]\\
\Mono(\tau M, \tau N)\ar[r] & \Map_{\Sigma_2}((M^{[2]}, \rmS(M)),(N\times N, \ N^{[2]})).
}
\]
in this case. Our goal is to show that when~$k\leqslant 2$ the square is a homotopy pullback. 

In the case the set~$\underline{k}=\underline{0}$ is empty, all the spaces involved are contractible and there is nothing to prove. 

Next, suppose that~$\underline{k}=\underline{1}$ is a singleton, so that we have a homeomorphism~$M\cong \bbR^m$. In this case the map~\hbox{$\Emb(\bbR^m ,N)\to \Mono(\tau\bbR^m ,\tau N)$} is an equivalence, so we need to show that the map 
\begin{equation}\label{eq:k=1}
\Map_{\Sigma_2}((\bbR^m)^{[2]},N^{[2]})\longrightarrow\Map_{\Sigma_2}(((\bbR^m)^{[2]}, \rmS(\bbR^m)),(N\times N, \ N^{[2]}))
\end{equation}
is a homotopy equivalence. This is equivalent to showing that the diagram 
\[
\xymatrix{
\Map_{\Sigma_2}((\bbR^m)^{[2]},N^{[2]})\ar[r]\ar[d] & \Map_{\Sigma_2}(\rmS(\bbR^m),N^{[2]})\ar[d]\\
\Map_{\Sigma_2}((\bbR^m)^{[2]},N\times N)\ar[r] & \Map_{\Sigma_2}(\rmS(\bbR^m),N\times N)
}
\]
is a homotopy pullback square. The boundary inclusion~$\rmS(\bbR^m)\rightarrow(\bbR^m)^{[2]}$ is a~$\Sigma_2$--equivariant homotopy equivalence~(see Example~\ref{ex:Euclidean}). Therefore, both horizontal arrows are homotopy equivalences, and then the square must be a homotopy pullback.

Lastly, let us suppose that the set~$\underline{k}=\underline{2}$ consists of two points. Let~$M$ denote~$\underline{k}\times \bbR^m$. There is an embedding of~$\underline{k}$ into~$M$, sending each point of~$\underline{k}$ to the origin of the corresponding copy of~$\bbR^m$. This embedding gives rise to the following diagram,
\[
\xymatrix@!0@C=80pt@R=30pt{
& \Emb(\underline{k} ,N)\ar[rr]\ar[dd] |\hole & & \Map_{\Sigma_2}(\underline{k}^{[2]},N^{[2]})\ar[dd] \\
\Emb(M,N)\ar[rr]\ar[ru]\ar[dd] & & \Map_{\Sigma_2}(M^{[2]},N^{[2]})\ar[dd]\ar[ru]\\
&N^k\ar[rr]|!{[ru];[rd]}\hole & &\Blow_\Delta(\underline{k} ,N)\\
\Mono(\tau M, \tau N)\ar[rr]\ar[ru] & &\Blow_\Delta(M, N)\ar[ru].
}
\]
where we have have used the abbreviation
\[
\Blow_\Delta(M ,N)=\Map_{\Sigma_2}((M^{[2]}, \rmS(M)),(N\times N, \ N^{[2]})).
\]
We want to prove that the front face is a homotopy pullback. For this it is enough to prove that the left, back, and right faces are each a homotopy pullback. 

It is a standard fact that the left face is a homotopy pullback for~$M=\underline{k}\times \bbR^m$, for any~$k$: the difference between an embedding of~$\bbR^m$ and an embedding of its center is given by a framing of the tangent space at the center, and it is the same for mere immersions.

Note that since~$\underline{k}$ is a zero dimensional manifold,~$\rmS_\Delta(\underline{k})=\emptyset$ and~$\underline{k}^{[2]}=\underline{k}\times \underline{k}\setminus\underline{k}$. In particular, for~$k=2$, we have~$\underline{k}^{[2]}\cong\Sigma_2$ as~$\Sigma_2$--sets. Then, by inspection, the back face is homeomorphic to the square
\[
\xymatrix{
N\times N\setminus N\ar[r]^-\subset\ar[d] & N^{[2]}\ar[d]\\
N\times N\ar[r]_-= & N\times N.
}
\]
Clearly the horizontal arrows are equivalences, and so we have a homotopy pullback square. 

%%%

As for the right face, the general formula
\[
(L\sqcup M)^{[2]}\cong (L\times M)\sqcup(M\times L)\sqcup L^{[2]}\sqcup M^{[2]}.
\]
gives that in the case~$M=\underline{2}\times \bbR^m$ there is a homeomorphism
\[
M^{[2]}\cong\left(\Sigma_2\times\bbR^{2m}\right)\sqcup\ \underline{2}\times\left((\bbR^m)^{[2]}\right),
\]
and then
\[
\Map_{\Sigma_2}(M^{[2]},N^{[2]})
\simeq
N^{[2]}\times\Map_{\Sigma_2}((\bbR^m)^{[2]},N^{[2]})^2.
\]
This maps to
\[
\Blow_\Delta(\underline{2}\times\bbR^m,N)
\simeq
(N\times N)\times\Blow_\Delta(\bbR^m,N)^2.
\]
Also, we have~$\rmS(M\sqcup N)\cong \rmS(M)\sqcup \rmS(N)$, so that
\[
\rmS(\underline{2}\times\bbR^m)\cong\underline{2}\times \rmS(\bbR^m), 
\]
and the boundary inclusion of~$\rmS(\underline{2}\times\bbR^m)$ into~$(\underline{2}\times\bbR^m)^{[2]}$ becomes the evident map
\[
\underline{2}\times \rmS(\bbR^m)\longrightarrow\left(\Sigma_2\times\bbR^{2m}\right)\sqcup\left(\underline{2}\times(\bbR^m)^{[2]}\right)
\]
into the summand on the right:~$\underline{2}\times$boundary inclusion of~$\rmS(\bbR^m)\rightarrow(\bbR^m)^{[2]}$.

Together, we see that the right face becomes
\[
\xymatrix{
N^{[2]}\times\Map_{\Sigma_2}((\bbR^m)^{[2]}, N^{[2]})^2\ar[r]\ar[d] & N^{[2]}\ar[d]\\
(N\times N)\times\Blow_\Delta(\bbR^m,N)^2\ar[r]& N\times N,
}
\]
We proved that the map 
\[
\Map_{\Sigma_2}((\bbR^m)^{[2]}, N^{[2]})\to \Blow_\Delta(\bbR^m,N)
\]
is a homotopy equivalence when we considered the case~$k=1$ (see~\eqref{eq:k=1}). It follows that the last square is a homotopy pullback.
\end{proof}

%%%

For general~$M$, we have the following result:

\begin{lemma}\label{lem:quadratic}
Define~$\rmF_N(M)$ to be the homotopy pullback of the following diagram
\[
\xymatrix{
 & \Map_{\Sigma_2}(M^{[2]},\ N^{[2]})\ar[d]^-{\eqref{eq:blow-to-P}}\\
\Mono(\tau M, \tau N)\ar[r] & \Map_{\Sigma_2}((M^{[2]}, \rmS(M)),(N\times N, \ N^{[2]})).
}
\]
The functor~$M\mapsto\rmF_N(M)$ is quadratic
\end{lemma}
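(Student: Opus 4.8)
The plan is to verify the two defining properties of a quadratic (polynomial of degree $\leqslant 2$) functor in the sense of Weiss calculus: that $M\mapsto\rmF_N(M)$ takes certain cubical diagrams of open subsets of $M$ to homotopy cartesian cubes. Concretely, for a manifold $M$ and pairwise disjoint open subsets $A_0, A_1, A_2$ of $M$, one must show that the cube sending $S\subseteq\{0,1,2\}$ to $\rmF_N\bigl(A_0\cup\bigcup_{i\in S}A_i'\bigr)$ — or, in the contravariant manifold-calculus formulation, the cube $S\mapsto\rmF_N(M\setminus A_S)$ where $A_S=\bigcup_{i\notin S}A_i$ — is homotopy cartesian. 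Since $\rmF_N$ is defined as a homotopy pullback of three functors, and homotopy pullbacks commute with homotopy limits of cubes, it suffices to check that each of the three corner functors
\[
M\mapsto\Map_{\Sigma_2}(M^{[2]},N^{[2]}),\qquad
M\mapsto\Mono(\tau M,\tau N),\qquad
M\mapsto\Blow_\Delta(M,N)
\]
is polynomial of degree $\leqslant 2$; a homotopy pullback of degree-$\leqslant 2$ functors is again degree $\leqslant 2$.

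First I would dispose of the easy corner: $M\mapsto\Mono(\tau M,\tau N)$ is well known to be linear (polynomial of degree $\leqslant 1$), hence a fortiori quadratic — this is part of the standard Hirsch--Smale/Weiss-calculus package, since it is homotopy equivalent to $\rmT_1\Emb(M,N)=\Imm(M,N)$ (or the appropriate section space) already recalled in Section~\ref{sec:immersions}. The content is therefore in the two mapping-space-out-of-blowup functors. Here the key geometric input is the behavior of the blowup under disjoint unions, exactly the formula already used in the proof of Lemma~\ref{lem:2-equivalence}:
\[
(L\sqcup M)^{[2]}\cong (L\times M)\sqcup(M\times L)\sqcup L^{[2]}\sqcup M^{[2]},
\]
together with $\rmS(L\sqcup M)\cong\rmS(L)\sqcup\rmS(M)$. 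The point is that the second cross-effect of $M\mapsto M^{[2]}$ is the off-diagonal term $L\times M\sqcup M\times L$, while the third cross-effect vanishes: for three pairwise disjoint pieces there is no term of $(L_0\sqcup L_1\sqcup L_2)^{[2]}$ involving all three, so the relevant triple-union cube of spaces $M^{[2]}$ is (homotopy) cocartesian in a strong sense — in fact the total cofiber, built from triple intersections, is contractible because the corresponding "triple" blow-up stratum is empty. Applying the contravariant functor $\Map_{\Sigma_2}(-,N^{[2]})$, which converts homotopy pushouts to homotopy pullbacks, shows $M\mapsto\Map_{\Sigma_2}(M^{[2]},N^{[2]})$ is polynomial of degree $\leqslant 2$. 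The same disjoint-union analysis applied simultaneously to the pair $(M^{[2]},\rmS(M))$ handles the third corner $\Blow_\Delta(M,N)=\Map_{\Sigma_2}((M^{[2]},\rmS(M)),(N\times N,N^{[2]}))$, which is itself (by its defining pullback square~\eqref{equation: pullback}) a homotopy pullback of mapping spaces out of $M^{[2]}$ and out of $\rmS(M)$; the functor $M\mapsto\Map_{\Sigma_2}(\rmS(M),-)$ is linear since $\rmS(M)=\rmS(L)\sqcup\rmS(M)$ turns disjoint unions into products with no higher cross-effects, and $M\mapsto\Map_{\Sigma_2}(M^{[2]},-)$ is degree $\leqslant 2$ as just argued.

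The step I expect to be the main obstacle is making precise and rigorous the claim that the cube $S\mapsto\bigl(\bigcup_{i\in S}U_i\bigr)^{[2]}$ (or the pair-version with $\rmS$) is \emph{strongly cocartesian}, i.e. that it is a homotopy pushout of its restrictions, \emph{compatibly} with the $\Sigma_2$-actions and with the boundary inclusions, so that mapping into $(N\times N, N^{[2]})$ really does produce a homotopy-cartesian cube. This requires care because $M^{[2]}$ is a blow-up, not literally $M\times M\setminus\Delta$, so one works with an open cover of $M^{[2]}$ induced by an open cover $\{U_i\}$ of $M$ together with the "off-diagonal" open set $M\times M\setminus\Delta$ suitably blown up, and one must check that the nerve of this cover computes the homotopy type equivariantly and that excision applies. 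Concretely I would cover $M^{[2]}$ by the opens $U_i^{[2]}$ and the $\Sigma_2$-set $\{(x,y):x\in U_i,\,y\in U_j\}$ for $i\neq j$, verify the pieces and their intersections are as the disjoint-union formula predicts up to $\Sigma_2$-homotopy, and then invoke the standard fact that $\Map_{\Sigma_2}(-,Z)$ sends such a (homotopy) colimit decomposition to a homotopy limit. Once strong cocartesianness is in hand, degree $\leqslant 2$ for each corner — and hence, by stability of polynomial degree under homotopy pullback, for $\rmF_N$ itself — follows formally. Finally, quadraticity in the precise sense used by the paper (that the canonical map $\rmF_N\to\rmT_2\rmF_N$ is an equivalence) is equivalent to being polynomial of degree $\leqslant 2$, so this completes the proof.
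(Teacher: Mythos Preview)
Your proposal is correct and takes essentially the same approach as the paper: reduce to showing each of the three corner functors is polynomial of degree~$\leqslant 2$, using that this class is closed under homotopy limits. The paper's proof is much terser---it simply cites~\cite[Ex.~2.4/7.1]{Weiss1999} for the quadraticity of~$M\mapsto\Map_{\Sigma_2}(M^{[2]},N^{[2]})$ rather than sketching the cross-effect/disjoint-union argument you give, and it handles~$\Blow_\Delta(M,N)$ by the same reduction to its defining pullback square~\eqref{equation: pullback}; the ``main obstacle'' you flag (strong cocartesianness of the blow-up cube, equivariantly) is precisely what that citation covers.
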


\begin{proof}
The class of functors of degree at most~$d$ is closed under homotopy limits. Therefore, it is enough to prove that the three functors in the homotopy pullback defining~$\rmF_N$ are quadratic. 

The functor~$M\mapsto \Mono(\tau M, \tau N)$ is in fact linear. The functor
\[
M\longmapsto \Map_{\Sigma_2}(M^{[2]},N^{[2]})
\]
is quadratic essentially by~\cite[Ex.~2.4/7.1]{Weiss1999}. Finally, the functor
\[
M\longmapsto \Map_{\Sigma_2}((M^{[2]}, \rmS(M)),(N\times N, \ N^{[2]}))
\]
is, by definition~\eqref{equation: pullback}, itself a homotopy pullback of functors each one of which is easily shown to be of degree at most~$2$. 
\end{proof}

\begin{theorem}\label{thm:T2description}
For all~$M$ and~$N$, the commutative square in Lemma~\ref{lem:square} induces a homotopy pullback square
\[
\xymatrix{
\rmT_2\Emb(M,N)\ar[r]\ar[d] & \Map_{\Sigma_2}(M^{[2]},\ N^{[2]})\ar[d]\\
\Mono(\tau M, \tau N)\ar[r] &  \Map_{\Sigma_2}((M^{[2]}, \rmS(M)),(N\times N, \ N^{[2]})).
}
\]
\end{theorem}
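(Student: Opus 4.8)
The plan is to deduce the theorem formally from Lemmas~\ref{lem:2-equivalence} and~\ref{lem:quadratic} together with the universal property of the quadratic approximation~$\rmT_2$ in Weiss's embedding calculus~\cite{Weiss1999}. Throughout, $M$ plays the role of the variable: all of $\Emb(-,N)$, $\Mono(\tau(-),\tau N)$, $\Map_{\Sigma_2}((-)^{[2]},N^{[2]})$, and $\Map_{\Sigma_2}(((-)^{[2]},\rmS(-)),(N\times N,N^{[2]}))$ are good functors on the poset~$\calO(M)$ of open subsets --- continuous and carrying isotopy equivalences to homotopy equivalences --- which is routine to check, so that Weiss's machinery applies.

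First I would apply~$\rmT_2$ to the commutative square of Lemma~\ref{lem:square}. As observed in the proof of Lemma~\ref{lem:quadratic}, the three functors $\Mono(\tau(-),\tau N)$, $\Map_{\Sigma_2}((-)^{[2]},N^{[2]})$, and $\Map_{\Sigma_2}(((-)^{[2]},\rmS(-)),(N\times N,N^{[2]}))$ are already polynomial of degree at most~$2$, so the approximation maps out of them are equivalences and~$\rmT_2$ leaves them fixed. Thus~$\rmT_2$ turns the square of Lemma~\ref{lem:square} into a commutative square with~$\rmT_2\Emb(M,N)$ in the top-left corner and the other three corners unchanged, and this is precisely the square in the statement. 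Since~$\rmF_N(M)$ of Lemma~\ref{lem:quadratic} is by definition the homotopy pullback of its lower-left, lower-right, and upper-right corners, proving the theorem amounts to showing that the resulting comparison map
\[
\Phi\colon\rmT_2\Emb(-,N)\longrightarrow\rmF_N(-)
\]
is an equivalence of functors.

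Both source and target of~$\Phi$ are polynomial of degree at most~$2$ --- the target by Lemma~\ref{lem:quadratic}, the source by construction of~$\rmT_2$ --- so~$\Phi$ is an equivalence as soon as it is an equivalence after restriction to~$\calO_2$, that is, when evaluated on disjoint unions of at most two copies of~$\bbR^m$; this is the content of Weiss's theorem that a polynomial functor of degree at most~$d$ is recovered up to equivalence as the homotopy right Kan extension of its restriction to~$\calO_d$. For such~$M$, on the one hand the approximation map~$\Emb(M,N)\to\rmT_2\Emb(M,N)$ is an equivalence, because~$M$ is itself the largest open subset of~$M$ lying in~$\calO_2(M)$ and hence the homotopy limit computing~$\rmT_2\Emb(M,N)$ collapses to~$\Emb(M,N)$; on the other hand, Lemma~\ref{lem:2-equivalence} says exactly that the square of Lemma~\ref{lem:square} is a homotopy pullback for this~$M$, that is, that~$\Emb(M,N)\to\rmF_N(M)$ is an equivalence. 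Under these two identifications~$\Phi_M$ becomes the map~$\Emb(M,N)\to\rmF_N(M)$, which is an equivalence. Hence~$\Phi$ is an equivalence, and the square of the statement is a homotopy pullback.

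I do not expect a genuine obstacle at this stage: the geometric and homotopy-theoretic substance has been packaged into Lemma~\ref{lem:2-equivalence} (the comparison for at most two balls) and Lemma~\ref{lem:quadratic} (degree at most~$2$), and what remains is the formal bookkeeping of Weiss's calculus. The points demanding care are confirming that all the functors in sight are good so that~$\rmT_2$ behaves as advertised, and checking that the comparison map~$\Phi$ one writes down genuinely makes the square of the statement commute --- or that any homotopy needed to make it commute is accounted for --- rather than merely commute up to an unspecified homotopy.
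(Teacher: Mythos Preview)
Your proposal is correct and follows essentially the same route as the paper: use Lemma~\ref{lem:2-equivalence} to see that $\Emb(-,N)\to\rmF_N(-)$ is an equivalence on objects of $\calO_2$, hence induces an equivalence on $\rmT_2$, and combine with Lemma~\ref{lem:quadratic} ($\rmF_N$ is already quadratic) to conclude $\rmT_2\Emb(-,N)\simeq\rmF_N(-)$. The paper compresses this into two sentences, whereas you spell out the intermediate comparison map $\Phi$ and the reduction to $\calO_2$ explicitly, but the content is the same.
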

\begin{proof}
Lemma~\ref{lem:2-equivalence} shows that the canonical map~$\Emb(M,N)\to\rmF_N(M)$ of functors in~$M$ induces an equivalence~$\rmT_2\Emb(M,N)\to\rmT_2\rmF_N(M)$ between their second order approximations. Lemma~\ref{lem:quadratic} shows that the canonical map~$\rmF_N\to\rmT_2\rmF_N$ is an equivalence. Both together imply the result.
\end{proof}

%%%

\section{Embeddings of the circle}\label{sec:circle}

For a general source manifold~$M$, we see no reason why the homotopy type of the quadratic approximation~$\rmT_2\Emb(M,N)$ should be independent of the smooth structure on~$N$. In this section, we will specialize to the case~$M=\rmS^1$, so that the embedding spaces are spaces of knots. The target manifold~$N$ can still be arbitrary of dimension at least~$4$.

We are ready to state and prove our main theorem.

\begin{theorem}\label{thm:main}
The homotopy type of the space~$\rmT_2\Emb(\rmS^1,N)$ does not depend on the smooth structure of the manifold~$N$.
\end{theorem}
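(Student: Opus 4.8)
The plan is to feed Theorem~\ref{thm:T2description} into the homotopy pullback square, and then show that each of the three corners, and each of the two maps between them, has homotopy type depending only on the homeomorphism type of~$N$ when the source is~$M=\rmS^1$. Write~$f\colon N_1\to N_2$ for a homeomorphism between two smooth manifolds of dimension~$n\geqslant4$; we want to produce a homotopy equivalence~$\rmT_2\Emb(\rmS^1,N_1)\simeq\rmT_2\Emb(\rmS^1,N_2)$ by comparing the two pullback squares from Theorem~\ref{thm:T2description} with~$M=\rmS^1$.

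First I would dispatch the lower-right corner. The space~$\Map_{\Sigma_2}((M^{[2]},\rmS(M)),(N\times N,\,N^{[2]}))$ was already shown, in the Lemma immediately after~\eqref{equation: pullback}, to depend (for fixed source~$M$) only on the homeomorphism type of~$N$, so nothing special about~$M=\rmS^1$ is needed there. Next, the lower-left corner: by Hirsch--Smale, for~$M=\rmS^1$ and~$n\geqslant2$ we have~$\Mono(\tau\rmS^1,\tau N)\simeq\Imm(\rmS^1,N)\simeq\Lambda\rmS(N)$, and Theorem~\ref{thm:circle_immersions} (equivalently Proposition~\ref{prop:sphere_bundles}) shows this is a homeomorphism invariant of~$N$. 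For the upper-right corner~$\Map_{\Sigma_2}(M^{[2]},N^{[2]})$, I would use Example~\ref{ex:circle}: the pair~$((\rmS^1)^{[2]},\rmS(\rmS^1))$ is~$\Sigma_2$--homotopy equivalent to~$\rmS^1$ with the antipodal action (indeed~$(\rmS^1)^{[2]}$ deformation retracts~$\Sigma_2$--equivariantly onto its central circle). Hence~$\Map_{\Sigma_2}((\rmS^1)^{[2]},N^{[2]})\simeq\Map_{\Sigma_2}(\rmS^1_{\mathrm{antip}},N^{[2]})$, and then Proposition~\ref{prop:homotopy_boundary_new} gives a zig-zag of~$\Sigma_2$--equivalences between the diagram~$\rmS(N_1)\to N_1^{[2]}\to N_1\times N_1$ and the corresponding diagram for~$N_2$, which in particular supplies a~$\Sigma_2$--homotopy equivalence~$N_1^{[2]}\simeq N_2^{[2]}$ and hence an equivalence of the mapping spaces.

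The real point, and the step I expect to be the main obstacle, is \emph{compatibility}: it is not enough that the three corners match up individually; the two maps~\eqref{eq:blow-to-P} and~\eqref{eq:imm-to-pullback} out of the corners must be identified with their~$N_2$--counterparts up to homotopy, compatibly with the chosen equivalences, so that the induced map on homotopy pullbacks is an equivalence. This is exactly why Proposition~\ref{prop:homotopy_boundary_new} is phrased as an equivalence of \emph{diagrams}~$\rmS(N)\to N^{[2]}\to N\times N$ (connected by a zig-zag of~$\Sigma_2$--equivariant equivalences) rather than merely of the individual spaces. Applying~$\Map_{\Sigma_2}((\rmS^1)^{[2]},-)$ and~$\Map_{\Sigma_2}(\rmS(\rmS^1),-)$ to this zig-zag of diagrams, together with the identification~$\Imm(\rmS^1,N)\simeq\Map_{\Sigma_2}(\rmS(\rmS^1)=\Sigma_2\times\rmS^1,\,\rmS(N))$ (the free-loop description, which is visibly functorial in the bundle~$\rmS(N)$ and so is carried along by the zig-zag), produces a zig-zag of equivalences between the \emph{entire} cospan defining~$\rmF_{N_1}(\rmS^1)$ and the cospan defining~$\rmF_{N_2}(\rmS^1)$. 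Here one should be slightly careful that the map~\eqref{eq:imm-to-pullback} is built from~$\Mono(\tau M,\tau N)\to\Map_{\Sigma_2}(\rmS(M),\rmS(N))\to\Map_{\Sigma_2}(\rmS(M),N^{[2]})$ and from~$f\mapsto f\times f$ on~$M^{[2]}$; both constituents are natural in the diagram~$\rmS(N)\to N^{[2]}\to N\times N$, so a single application of the zig-zag of Proposition~\ref{prop:homotopy_boundary_new} handles them simultaneously. Taking homotopy pullbacks of the two cospans then yields~$\rmF_{N_1}(\rmS^1)\simeq\rmF_{N_2}(\rmS^1)$, and since Theorem~\ref{thm:T2description} identifies~$\rmT_2\Emb(\rmS^1,N_i)$ with this homotopy pullback, we conclude~$\rmT_2\Emb(\rmS^1,N_1)\simeq\rmT_2\Emb(\rmS^1,N_2)$. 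The one genuinely substantive input beyond formal nonsense is Proposition~\ref{prop:homotopy_boundary_new}; everything else is assembling mapping-space functors and homotopy pullbacks, so I would keep that bookkeeping explicit but brief.
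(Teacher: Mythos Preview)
Your proposal is correct and follows essentially the same route as the paper: reduce via Theorem~\ref{thm:T2description} to a cospan, replace~$\Mono(\tau\rmS^1,\tau N)$ by~$\Lambda\rmS(N)\cong\Map_{\Sigma_2}(\rmS(\rmS^1),\rmS(N))$, observe that the whole cospan is then a functor of the~$\Sigma_2$--diagram~$\rmS(N)\to N^{[2]}\to N\times N$, and invoke Proposition~\ref{prop:homotopy_boundary_new}. The paper makes the ``natural in the diagram'' step explicit by unfolding the lower-right corner via~\eqref{equation: pullback} into the five-term diagram~\eqref{diagram: T2knots}, but this is exactly the bookkeeping you sketch.
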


\begin{proof} 
By Theorem~\ref{thm:T2description}, and together with the equivalence~$\Imm(S^1, N)\simeq \Lambda \rmS(N)$, we already know that the space~$\rmT_2\Emb(\rmS^1,N)$ is equivalent to the homotopy pullback of the following diagram.
\begin{equation} \label{eq: T2knots}
\xymatrix{
 & \Map_{\Sigma_2}((\rmS^1)^{[2]},N^{[2]})\ar[d]\\
\Lambda \rmS(N)\ar[r] & \Map_{\Sigma_2}(((\rmS^1)^{[2]}, \rmS(\rmS^1)),(N\times N, \ N^{[2]}))
}
\end{equation}
We claim that the homotopy type of this diagram~\eqref{eq: T2knots} is determined by the homotopy type of the diagram~\hbox{$\rmS(N)\to N^{[2]} \to N\times N$}. By Proposition~\ref{prop:homotopy_boundary_new}, the latter is determined by the homeomorphism type of~$N$ and is independent of the smooth structure.

Because the lower right corner of the diagram~\eqref{eq: T2knots} is defined as a pullback, that entire diagram is determined by the following diagram.
\[
\xymatrix@!0@C=9em@R=5em{
 & \Map_{\Sigma_2}((\rmS^1)^{[2]}, \ {N\times N})\ar[d]\\
\Lambda \rmS(N)\ar[ru]\ar[rd]& \Map_{\Sigma_2}( \rmS(\rmS^1), \ {N\times N}) & \Map_{\Sigma_2}((\rmS^1)^{[2]},N^{[2]}) \ar[lu]\ar[ld] \\ &  \Map_{\Sigma_2}(\rmS(\rmS^1),N^{[2]}) \ar[u]&
}
\]
Using the fact that~$\rmS(\rmS^1)\cong \Sigma_2 \times \rmS^1$, we may rewrite this diagram as follows
\begin{equation}\label{diagram: T2knots}
\xymatrix@!0@C=9em@R=5em{
 & \Map_{\Sigma_2}((\rmS^1)^{[2]}, \ {N\times N})\ar[d]\\
\Lambda \rmS(N)\ar[ru]^-f\ar[rd]_-g & \Map(\rmS^1, \ {N\times N}) & \Map_{\Sigma_2}((\rmS^1)^{[2]},N^{[2]}) \ar[lu]\ar[ld] \\ &  \Map(\rmS^1, N^{[2]}) \ar[u]&
}
\end{equation}
where~$f$ is induced by the squaring map,~$g$ is induced by the inclusion~$\rmS(N)\rightarrow N^{[2]}$ and all the other maps should be self-evident. It is clear that the homotopy type of this diagram is determined by the homotopy type of the diagram~$\rmS(N)\to N^{[2]}\to N\times N$, and therefore so is the homotopy limit of this diagram, which is~$\rmT_2\Emb(\rmS^1, N)$.
\end{proof}

\begin{corollary}\label{cor:main}
The homotopy~$(2n-7)$--type of the space~$\Emb(\rmS^1,N)$ does not depend on the smooth structure on the manifold~$N$.
\end{corollary}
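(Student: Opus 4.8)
The plan is to deduce Corollary~\ref{cor:main} by combining Theorem~\ref{thm:main} with the Goodwillie--Klein connectivity estimate for the map $\Emb(\rmS^1,N)\to\rmT_2\Emb(\rmS^1,N)$. First I would recall that Goodwillie and Klein~\cite{GoodwillieKlein15} show the map $\Emb(M,N)\to\rmT_k\Emb(M,N)$ is $\big(k(n-m-2)+1-m\big)$--connected, where $m=\dim M$ and $n=\dim N$. Specializing to $M=\rmS^1$ (so $m=1$) and $k=2$, this connectivity is
\[
2(n-3)+1-1=2n-6.
\]
Recall that a $c$--connected map induces an isomorphism on $\pi_i$ for $i<c$ and an epimorphism on $\pi_c$; in particular it induces an isomorphism on homotopy $(c-1)$--types. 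Hence the map $\Emb(\rmS^1,N)\to\rmT_2\Emb(\rmS^1,N)$ is an isomorphism on the homotopy $(2n-7)$--type (and even an epimorphism on $\pi_{2n-6}$).

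Next I would invoke Theorem~\ref{thm:main}: for homeomorphic smooth $n$--manifolds $N_1$ and $N_2$, there is a homotopy equivalence $\rmT_2\Emb(\rmS^1,N_1)\simeq\rmT_2\Emb(\rmS^1,N_2)$. Combining the two facts, for homeomorphic $N_1,N_2$ we get a chain
\[
\tau_{\leqslant 2n-7}\Emb(\rmS^1,N_1)\simeq\tau_{\leqslant 2n-7}\rmT_2\Emb(\rmS^1,N_1)\simeq\tau_{\leqslant 2n-7}\rmT_2\Emb(\rmS^1,N_2)\simeq\tau_{\leqslant 2n-7}\Emb(\rmS^1,N_2),
\]
where $\tau_{\leqslant k}$ denotes the $k$--th Postnikov section. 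This is exactly the claim that the homotopy $(2n-7)$--type of $\Emb(\rmS^1,N)$ is independent of the smooth structure on $N$. One subtlety worth spelling out is that the equivalences must be compatible with the approximation maps, so that one really is comparing Postnikov sections of the embedding spaces themselves and not just of the $\rmT_2$ approximations; since the approximation map is natural in $N$ (or at least natural enough under the zig-zag of equivalences produced in the proof of Theorem~\ref{thm:main}, which is built from the geometrically natural zig-zag of Proposition~\ref{prop:homotopy_boundary_new}), this compatibility holds.

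There is essentially no hard part here: the corollary is a bookkeeping consequence of Theorem~\ref{thm:main} and a cited connectivity result. The only point requiring minor care is the arithmetic of the connectivity bound and the conversion between ``$c$--connected map'' and ``isomorphism on homotopy $k$--types'': a $(2n-6)$--connected map is an isomorphism on $\pi_i$ for $i\leqslant 2n-7$, hence an equivalence on $(2n-7)$--types. One should also note that the statement has content only for $n\geqslant 4$ (for $n\leqslant 3$ the range $2n-7$ is negative), and that for $n=4$ it recovers precisely the $\pi_0$ and $\pi_1$ statements advertised in the introduction, with the bonus epimorphism on $\pi_2$ coming from the $(2n-6)=2$--connectivity.
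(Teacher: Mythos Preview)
Your proof is correct and follows essentially the same approach as the paper: apply the Goodwillie--Klein connectivity bound with $k=2$ and $m=1$ to get a $(2n-6)$--connected approximation map, hence an equivalence on $(2n-7)$--types, and combine with Theorem~\ref{thm:main}. The only remark is that your ``subtlety'' about compatibility of the approximation maps with the zig-zag is unnecessary: the chain of equivalences of Postnikov sections you wrote down already works, since each step is an equivalence in its own right and ``same homotopy $(2n-7)$--type'' is transitive without any naturality hypothesis.
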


\begin{proof}
The approximation map~$\Emb(M,N)\to\rmT_2\Emb(M,N)$ to the second layer in the Goodwillie--Weiss tower is~\hbox{$(2(n-m-2)+1-m)$}--connected. For the embeddings of the circle~$M=\rmS^1$, this shows that the approximation map~$\Emb(\rmS^1,N)\to\rmT_2\Emb(\rmS^1,N)$ is~$(2n-6)$--connected, so that both spaces share the same homotopy~$(2n-7)$--type. Therefore, the theorem implies the corollary.
\end{proof}

To end this section, we will describe the homotopy fiber of the map
\[
\rmT_2\Emb(\rmS^1, N)\to \rmT_1\Emb(\rmS^1, N)\simeq\Lambda\rmS(N)
\]
over a convenient basepoint, in low dimensions. Let us pick a point~$(x, \vec{u})\in\rmS(N)$, i.e., a point~$x\in N$ and a unit tangent vector~$\vec{u}$ at~$x$. Let the corresponding constant loop be our chosen basepoint of the space~$\Lambda \rmS(N)$. This point is not in the image of the homotopy equivalence~$\Imm(S^1, N)\to \Lambda\rmS(N)$, but it is connected by a path to the image of an unknot in a Euclidean neighborhood of~$x$. So we may think of our basepoint as representing a small unknot. 

Let~$\Omega N$ be the pointed loop space of~$N$ with our chosen basepoint~$x$. There is a map~$\Omega N\to N$, evaluating at the middle of a loop. Let~$(\Omega N)^{\tau}$ be the Thom space of the pullback of the tangent bundle of~$N$ along this evaluation map. The group~$\Sigma_2$ acts on~$(\Omega N)^{\tau}$ by reversing the direction of loops and by~$-1$ on the tangent bundle. Let~\hbox{$\rmQ Y=\colim_n\Omega^n\Sigma^n Y$} be the usual stable homotopy functor.

\begin{proposition}\label{prop: fiber}
Let~$N$ be a smooth manifold of dimension at least~$4$. The homotopy fiber of the forgetful map~$\rmT_2\Emb(\rmS^1, N)\to \Lambda \rmS(N)$ over a constant loop is related by a~$3$--connected map to the space 
\[
{\Map_*}_{\Sigma_2}\!\left(C, \Omega\rmQ(\Omega N)^{\tau}\right) 
\]
of equivariant pointed maps on the cofiber~\hbox{$C\cong\rmS^1\times\rmS^1/\rmS^1$} of the inclusion~\hbox{$\rmS(\rmS^1)\to(\rmS^1)^{[2]}$}.
\end{proposition}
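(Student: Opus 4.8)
The plan is to feed the chosen basepoint into the homotopy pullback square of Theorem~\ref{thm:T2description} and then to identify the resulting space of relative equivariant maps. Using the identification $\rmT_1\Emb(\rmS^1,N)\simeq\Mono(\tau\rmS^1,\tau N)\simeq\Lambda\rmS(N)$ together with the fact that a homotopy pullback square is fiberwise a homotopy pullback, the homotopy fiber of $\rmT_2\Emb(\rmS^1,N)\to\Lambda\rmS(N)$ over the constant loop at $(x,\vec u)$ agrees with the homotopy fiber, over the image $\psi_0$ of that loop, of the right-hand vertical map
\[
\Map_{\Sigma_2}\big((\rmS^1)^{[2]},N^{[2]}\big)\longrightarrow\Map_{\Sigma_2}\big(((\rmS^1)^{[2]},\rmS(\rmS^1)),(N\times N,N^{[2]})\big)
\]
of Theorem~\ref{thm:T2description}. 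Unwinding~\eqref{diagram: T2knots}, the point $\psi_0$ is the commuting square built from the \emph{constant} map $(\rmS^1)^{[2]}\to N\times N$ at $(x,x)$ and from the map $\rmS(\rmS^1)\to N^{[2]}$ which, under $\rmS(\rmS^1)\cong\Sigma_2\times\rmS^1$ (Example~\ref{ex:circle}), is constant on each of the two circles at the two boundary points of $N^{[2]}$ over $x$ determined by $\pm\vec u$. Presenting the target as the strict pullback~\eqref{equation: pullback} and computing the homotopy fiber of the induced map of squares one row at a time --- which is legitimate because $\Map_{\Sigma_2}(Y,{-})$ preserves homotopy pullbacks and the relevant map into $N\times N$ is constant at $(x,x)$ --- I would arrive at
\[
\Map^{\mathrm{rel}}_{\Sigma_2}\big((\rmS^1)^{[2]},\,\Phi;\,\alpha_0\big),
\]
the space of $\Sigma_2$--equivariant maps $(\rmS^1)^{[2]}\to\Phi$ restricting on $\rmS(\rmS^1)$ to a prescribed map $\alpha_0$, where $\Phi$ is the homotopy fiber of $N^{[2]}\to N\times N$ over $(x,x)$ and $\alpha_0$ is lifted into $\Phi$ by the constant path. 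Note that $\alpha_0$ sends each boundary circle constantly to a point of the linking sphere $\rmS(T_xN)\subset\Phi$ --- the fiber of $N^{[2]}\to N\times N$ over $(x,x)$ --- and that the two points form a single free $\Sigma_2$--orbit.

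Next I would analyze $\Phi$. Since $N^{[2]}\simeq N\times N\setminus\Delta$, the space $\Phi$ is the homotopy fiber of $N\times N\setminus\Delta\hookrightarrow N\times N$ over $(x,x)$, and the usual path-space model presents it as a complement $X\setminus A$: here $X$ is the space of paths in $N\times N$ that end at $(x,x)$, which is $\Sigma_2$--equivariantly contractible (both $X$ and its fixed subspace of diagonal paths are contractible), while $A\subset X$ is the subspace of paths that begin on $\Delta$. Folding a pair of paths with a common starting point and endpoint $x$ into a single loop identifies $A$, $\Sigma_2$--equivariantly, with $\Omega N$, the involution becoming loop reversal, and identifies the normal bundle of $A$ in $X$ with the pullback of $\tau N$ along evaluation at the midpoint of the loop, the involution acting by $-1$ on this bundle. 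The Pontryagin--Thom collapse of (a finite-dimensional model of) a tubular neighborhood of $A$ then produces a $\Sigma_2$--equivariant cofiber sequence $\Phi\to X\to(\Omega N)^{\tau}$; since $X\simeq_{\Sigma_2}\ast$, this gives a $\Sigma_2$--equivariant equivalence $\Sigma\Phi\simeq(\Omega N)^{\tau}$, with the action on $(\Omega N)^{\tau}$ the one described in the statement. As $\Phi$ is $(n-2)$--connected and $(\Omega N)^{\tau}$ is $(n-1)$--connected, Freudenthal suspension followed by stabilization produces a $\Sigma_2$--equivariant map $\kappa\colon\Phi\to\Omega\rmQ(\Omega N)^{\tau}$ that is $(2n-3)$--connected; moreover, since $\alpha_0$ takes values in the sphere bundle that the Pontryagin--Thom map collapses, $\kappa$ carries $\alpha_0$ to the constant map at the ($\Sigma_2$--fixed) basepoint.

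Finally, post-composition with $\kappa$ sends $\Map^{\mathrm{rel}}_{\Sigma_2}((\rmS^1)^{[2]},\Phi;\alpha_0)$ into $\Map^{\mathrm{rel}}_{\Sigma_2}((\rmS^1)^{[2]},\Omega\rmQ(\Omega N)^{\tau};\mathrm{const})$, and because the boundary value is now constant at the basepoint this last space is precisely ${\Map_*}_{\Sigma_2}(C,\Omega\rmQ(\Omega N)^{\tau})$ with $C=(\rmS^1)^{[2]}/\rmS(\rmS^1)$; since $C$ is built from a point by attaching a $1$--cell and a $2$--cell, the $(2n-3)$--connected map $\kappa$ induces a $(2n-5)$--connected, hence for $n\ge4$ at least $3$--connected, map on these equivariant mapping spaces, which is the assertion. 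The hard part is making the last two steps genuinely equivariant: the linking sphere represents a generator of $\pi_{n-1}\Phi\cong\bbZ$, so the boundary condition is non-trivial in $\Phi$ and dies only upon stabilizing, and $\kappa$ must be arranged to stay at the basepoint on $\alpha_0$ while respecting that $\Phi$ carries a \emph{free} involution whereas the target has a fixed basepoint --- so one should work with the unreduced suspension, or with the relevant pair, rather than a naive reduced suspension. A subsidiary technical point is to justify the tubular-neighborhood and collapse arguments for the infinite-dimensional path space $X$, which one does by finite-dimensional approximation.
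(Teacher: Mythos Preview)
Your argument is correct and follows the same architecture as the paper: identify the homotopy fiber with equivariant maps~$(\rmS^1)^{[2]}\to F$ (your~$\Phi$) relative to a prescribed non-constant boundary condition~$\alpha_0$, stabilize~$F$ via Freudenthal, identify the suspension of~$F$ with~$(\Omega N)^\tau$, and then pass to pointed maps out of the cofiber~$C$. Two points of comparison are worth recording.

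For the identification of~$\widetilde\Sigma F$ with~$(\Omega N)^\tau$ you propose a Pontryagin--Thom collapse of a tubular neighborhood of~$A\simeq\Omega N$ inside the contractible path space~$X$. The paper instead uses Mather's cube theorem: the bottom face
\[
\xymatrix{\rmS(N)\ar[r]\ar[d] & N\ar[d]\\ N^{[2]}\ar[r] & N\times N}
\]
is a homotopy pushout, so the square of vertical homotopy fibers over~$(x,x)$, namely~$\rmS(\Omega N)\to\Omega N$, $F\to\ast$, is also a homotopy pushout, and taking horizontal cofibers yields~$(\Omega N)^\tau\simeq\widetilde\Sigma F$. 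This sidesteps your acknowledged infinite-dimensional tubular neighborhood issue entirely and is arguably cleaner, though your collapse picture is more geometrically transparent about \emph{why} a Thom space appears.

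Your sentence ``since~$\alpha_0$ takes values in the sphere bundle that the Pontryagin--Thom map collapses, $\kappa$ carries~$\alpha_0$ to the constant map'' is exactly where the paper does real work, and as written it is not yet a proof: the collapse is a map out of~$X$, not out of~$\Phi$, and one must still trace what the Freudenthal map~$\Phi\to\Omega\widetilde\Sigma\Phi$ does to~$\alpha_0$. The paper handles this by working with the unreduced~$\widetilde\Omega\widetilde\Sigma F$, reinterpreting the relative mapping space as maps~$(\rmS^1)^{[2]}\times I\to\widetilde\Sigma F$ with a prescribed restriction, and then showing that this prescribed restriction is equivariantly nullhomotopic because the boundary map~$\rmS(\rmS^1)\to F$ extends equivariantly over the~$2$--dimensional~$(\rmS^1)^{[2]}$ into the~$2$--connected~$F$. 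Your closing caveat shows you see the issue; the paper's argument is what fills it.

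Finally, your connectivity bound~$(2n-5)$ is sharper than the paper's stated~$3$: the paper only records the~$n=4$ case, using that~$F$ is~$2$--connected rather than~$(n-2)$--connected.
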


\begin{proof}
By Theorem~\ref{thm:T2description} and/or diagram~\eqref{diagram: T2knots}, the homotopy fiber that we are interested in is equivalent to the total homotopy fiber of the following diagram
\[
\xymatrix{
\Map_{\Sigma_2}((\rmS^1)^{[2]},N^{[2]})  \ar[r]\ar[d] & 
 \Map_{\Sigma_2}((\rmS^1)^{[2]}, \ {N\times N})\ar[d]\\
 \Map(\rmS^1, N^{[2]}) \ar[r] & \Map(\rmS^1, \ {N\times N})   
}
\]
The calculation of the total fiber is pretty straightforward. If there is any subtlety, it has to do with basepoints and the (lack of) dependence on the choice of basepoint. We will calculate the total fiber by first taking fibers in the horizontal direction and then the vertical direction. 

Let~$F$ be the homotopy fiber of the map~$N^{[2]}\to N\times N$ over the basepoint~$(x, x)$. Because the inclusion~\hbox{$N\times N\setminus N\to N^{[2]}$} is a homotopy equivalence, the space~$F$ is equivalent to the homotopy fiber of the inclusion~\hbox{$N\times N\setminus N\to N\times N$}. There is a map
\[
\xymatrix@!0@C=60pt@R=30pt{
N\setminus\{x\}\ar[r]\ar[d]&N\times N\setminus N\ar[r]\ar[d]& N\ar@{=}[d]\\
N\ar[r]&N\times N\ar[r]&N
}
\]
of (horizontal) fibration sequences, and computing homotopy fibers vertically, we see that~$F$ is homotopy equivalent to the homotopy fiber of the inclusion~$N\setminus\{x\}\rightarrow N$. In particular, the space~$F$ is~$2$--connected. 

The total fiber that we are interested in is equivalent to the fiber of the following map
\[
\Map_{\Sigma_2}((\rmS^1)^{[2]}, F)\to  \Map(\rmS^1, F).
\]
Equivalently, we can write this map as follows, using~$\rmS(\rmS^1)\cong \Sigma_2\times \rmS^1$:
\begin{equation}\label{eq: fiber}
\Map_{\Sigma_2}((\rmS^1)^{[2]}, F)\to  \Map_{\Sigma_2}(\rmS(\rmS^1), F).
\end{equation}
Note that the basepoint of~$\Map_{\Sigma_2}(\rmS(\rmS^1), F)$ is not a constant map. (There is no constant~$\Sigma_2$--equivariant constant map into~$F$, since the action of~$\Sigma_2$ on~$F$ is free.) Rather, the basepoint is a map that is constant on each connected component of~$\rmS(\rmS^1)\cong \Sigma_2\times \rmS^1$. It sends one copy of~$\rmS^1$ to the point~$(x, \vec{u})$ and the other copy to~$(x, -\vec{u})$. Let us explain how we think of the two points~$(x, \pm{\vec{u}})$ as points in~$F$. Initially~$(x, \vec{u})$ was defined to be the chosen basepoint of~$\rmS(N)$. Since we have an inclusion~$\rmS(N)\hookrightarrow N^{[2]}$, the points~$(x, \pm\vec{u})$ can also be thought as points of~$N^{[2]}$. In fact, they are points in the fiber of the map~$N^{[2]}\to N\times N$ over the point~$(x,x)$. Therefore they naturally define points in the homotopy fiber of same map, which is~$F$. It follows that the homotopy fiber of the map~\eqref{eq: fiber} is the space of equivariant maps from~$(\rmS^1)^{[2]}$ to~$F$ that take one path component of the boundary of~$(\rmS^1)^{[2]}$ to~$(x, \vec{u})$ and the other path component to~$(x, -\vec{u})$. Notice that this map from the boundary of~$(\rmS^1)^{[2]}$ to~$F$ can be extended to a~$\Sigma_2$--equivariant map from all of~$(\rmS^1)^{[2]}$ to~$F$, because~$(\rmS^1)^{[2]}$ is two-dimensional and~$F$ is two-connected. In other words the fiber of~\eqref{eq: fiber} is not empty.

Next, we would like to stabilize. Let~$\widetilde\Sigma F$ be the {\it unreduced} suspension of~$F$. We use the unreduced suspension because~$F$ does not have a~$\Sigma_2$--equivariant basepoint. Let~$\widetilde \Omega \widetilde\Sigma F$ be the space of paths in~$\widetilde\Sigma F$ from the~``south pole'' to the~``north pole.'' (By our convention, the south pole is the basepoint of~$\widetilde\Sigma F$.) There is a natural map~$F\to \widetilde \Omega \widetilde\Sigma F$ that is~$5$--connected because the space~$F$ is~$2$--connected.~(This is a version of the Freudenthal suspension map for unpointed spaces.) It follows that the fiber of the map~\eqref{eq: fiber} is connected to the fiber of the map 
\begin{equation}\label{eq: weird}
\Map_{\Sigma_2}((\rmS^1)^{[2]}, \widetilde \Omega \widetilde\Sigma F)\to  \Map_{\Sigma_2}(\rmS(\rmS^1), \widetilde \Omega \widetilde\Sigma F)
\end{equation}
by a~$3$--connected map.

Next, we claim that one can replace~$\widetilde\Omega$ with the usual loop space~$\Omega$ in this map. To see this, observe that the homotopy fiber above can be identified with the space of~$\Sigma_2$--equivariant maps from~$(\rmS^1)^{[2]}\times I$ to~$\widetilde\Sigma F$ that agree with a prescribed map on the subspace
\[
\rmS(\rmS^1)\times I \underset{\rmS(\rmS^1)\times\partial I}{\cup}(\rmS^1)^{[2]}\times\partial I.
\]
The prescribed map is defined as follows. On~$\rmS(\rmS^1)\times I$  it is the composite~$\rmS(\rmS^1)\times I \to F\times I \to \widetilde \Sigma F$, where the first map is determined by the basepoint map~$\rmS(\rmS^1) \to F$ and the second map is the canonical quotient map. On the components of~$(\rmS^1)^{[2]}\times\partial I$ the prescribed map is constant with image the south pole and north pole, respectively. 
We claim that the prescribed map is~$\Sigma_2$--equivariantly homotopic to the constant map into the south pole: this claim follows from the fact that the basepoint map~$\rmS(\rmS^1)\to F$ can be extended to a~$\Sigma_2$--equivariant map from~$(\rmS^1)^{[2]}$ to~$F$, because the space~$F$ is~$2$--connected. It follows in turn that the homotopy fiber of the map~\eqref{eq: weird} is equivalent to the space of pointed~$\Sigma_2$--equivariant maps from~$(\rmS^1)^{[2]}\times I$ to~$\widetilde\Sigma F$ that agree with the{\it~trivial} map on the indicated subspace. This mapping space is
the homotopy fiber of the map
\[
\Map_{\Sigma_2}((\rmS^1)^{[2]},  \Omega \widetilde\Sigma F)\to  \Map_{\Sigma_2}(\rmS(\rmS^1),  \Omega \widetilde\Sigma F),
\]
which is obtained from~\eqref{eq: weird} by replacing the functor~$\widetilde \Omega$ with the ordinary loop space functor~$\Omega$. Note that now the basepoint of~$\Map_{\Sigma_2}(\rmS(\rmS^1),  \Omega \widetilde\Sigma F)$ is the constant map into the trivial loop.

Finally, we can stabilize. By the ordinary Freudenthal suspension theorem, the last homotopy fiber is mapped to the homotopy fiber 
of 
\begin{equation}
\Map_{\Sigma_2}((\rmS^1)^{[2]},  \Omega \rmQ  \widetilde\Sigma F)\to  \Map_{\Sigma_2}(\rmS(\rmS^1),  \Omega \rmQ  \widetilde\Sigma F).
\end{equation}
by a~$4$--connected map. It follows that the homotopy fiber that we are interested in is connected to the last homotopy fiber by a~$3$--connected map. Now the homotopy fiber is taken over the usual basepoint, given by the constant map. The homotopy fiber is equivalent to the space 
\[
{\Map_*}_{\Sigma_2}(C,  \Omega \rmQ  \widetilde\Sigma F).
\]
of equivariant pointed maps, where~$C$ the (homotopy) cofiber of the inclusion~\hbox{$\rmS(\rmS^1)\to(\rmS^1)^{[2]}$}.

It remains to identify the space~$\widetilde\Sigma F$ with the Thom space~$(\Omega N)^\tau$. Recall that~$F$ denotes the homotopy fiber of the canonical map~\hbox{$N^{[2]}\to N\times N$}. Consider the following cube, where~$\rmS(\Omega N)$ denotes the pullback of the spherical tangent bundle~$\rmS(N)$ along the evaluation map~\hbox{$\Omega N\to N$}. 
\[
\xymatrix@!0@C=50pt@R=30pt{
\rmS(\Omega N)\ar[dr] \ar[rr]\ar[dd] && \Omega N\ar[dr] \ar[dd]|\hole \\
&F \ar[dd]\ar[rr] && \ast\ar[dd]\\
\rmS(N)\ar[dr] \ar[rr]|\hole  && N \ar[dr] \\
&N^{[2]}\ar[rr]  &&  {N\times N}  
}
\]
By construction, all vertical squares are homotopy pullbacks. The bottom square is both a strict and a homotopy pushout diagram. 
By Mather's cube theorem~\cite[Thm.~25]{Mather}, the top square is a homotopy pushout square. Passing to its~(horizontal) homotopy cofibers, we get an equivalence~\hbox{$(\Omega N)^\tau\xrightarrow{\simeq}\widetilde\Sigma F~$}.
\end{proof}

%%%

\section{Applications to the fourth dimension}\label{sec:four}

Let~$N=N^4$ be a smooth connected~$4$--manifold. The tangent bundle of an oriented~$4$--manifold {\em is} determined by its topology~(see~\cite{Hirzebruch+Hopf} and~\cite{Dold+Whitney}): Any oriented~$4$--plane bundle over a~$4$--dimensional complex is determined by its second Stiefel--Whitney class~$w_2$, its first Pontryagin class~$p_1$, and its Euler class~$e$. For the tangent bundle, these classes are all topological invariants. This also implies Proposition~\ref{prop:sphere_bundles} in dimension~$4$.

In this section, we work out the implications of our general results for the spaces~$\Emb(\rmS^1,N^4)$:

\begin{corollary}\label{cor:main4}
The homotopy~$1$--type of the space~$\Emb(\rmS^1,N^4)$ does not depend on the smooth structure on the~$4$--manifold~$N^4$.
\end{corollary}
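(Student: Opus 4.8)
The plan is to read the statement off as the special case $n=4$ of Corollary~\ref{cor:main}. For a closed smooth $4$--manifold one has $n=4$, hence $2n-7=1$, so Corollary~\ref{cor:main} already asserts precisely that the homotopy $1$--type of $\Emb(\rmS^1,N^4)$ is determined by the homeomorphism type of $N^4$. Unwound, this is the statement that the set $\pi_0\Emb(\rmS^1,N^4)$ and, for each of its path components, the fundamental group $\pi_1$ are homeomorphism invariants of $N^4$ --- which is exactly Corollary~\ref{cor:main4}, and the negative partial answer to Viro's question in its original dimension.

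For a self-contained account in this dimension I would assemble the two inputs explicitly rather than quote Corollary~\ref{cor:main} as a black box. First, the Goodwillie--Klein estimate recalled in Section~\ref{sec:immersions} makes the approximation map $\Emb(\rmS^1,N)\to\rmT_2\Emb(\rmS^1,N)$ be $(2n-6)$--connected, so for $n=4$ it is $2$--connected; by the definition of connectivity this means it is a $\pi_0$--bijection, a $\pi_1$--isomorphism for every choice of basepoint, and a $\pi_2$--epimorphism. Second, Theorem~\ref{thm:main} gives that the homotopy type of $\rmT_2\Emb(\rmS^1,N^4)$ depends only on the homeomorphism type of $N^4$. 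Combining these two facts yields the claim about the homotopy $1$--type, and at the same time produces the $\pi_2$--lower bound exploited in the rest of this section.

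I expect no genuine obstacle here: the corollary is a specialization of Theorem~\ref{thm:main} plus a connectivity count, and the only step a reader might want spelled out is the dimension--$4$ incarnation of the input Proposition~\ref{prop:sphere_bundles}. There I would remark that in dimension $4$ one need not invoke Thom~\cite{Thom} and Nash~\cite{Nash}: by Hirzebruch--Hopf~\cite{Hirzebruch+Hopf} and Dold--Whitney~\cite{Dold+Whitney} an oriented $4$--plane bundle over a $4$--complex is classified by $w_2$, $p_1$ and $e$, and for the tangent bundle of $N^4$ all three are homeomorphism invariants, so homeomorphic $N_1^4$ and $N_2^4$ have spherical tangent bundles with $\rmS(N_1)\simeq\rmS(N_2)$. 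This is the classical $4$--manifold bundle input recalled at the start of the section, and it makes the hypothesis of Theorem~\ref{thm:main} transparent in the case at hand; with it in place, the deduction of Corollary~\ref{cor:main4} is purely formal.
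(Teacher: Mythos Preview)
Your proposal is correct and matches the paper's approach: Corollary~\ref{cor:main4} is stated in the paper without a separate proof block precisely because it is the specialization $n=4$, $2n-7=1$ of Corollary~\ref{cor:main}, and your unwinding via the $(2n-6)$--connectivity of $\Emb(\rmS^1,N)\to\rmT_2\Emb(\rmS^1,N)$ together with Theorem~\ref{thm:main} is exactly the intended argument. The Hirzebruch--Hopf/Dold--Whitney remark you add is also the paper's own observation at the start of Section~\ref{sec:four}; note only that it is not a \emph{hypothesis} of Theorem~\ref{thm:main} but rather an alternative, dimension-specific route to the sphere-bundle invariance that Theorem~\ref{thm:main} uses internally (via Proposition~\ref{prop:homotopy_boundary_new}).
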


This allows us to compute the set of components (isotopy classes of embeddings), which was known before, and all the fundamental groups of the components, which is new, from the topology alone. We also get a lower bound on~$\pi_2$.

Let us start with~$\pi_0$. If~$M=\rmS^1$ and~$N^4$ is any~$4$--manifold, then the map~$\Emb(\rmS^1,N^4)\to\Imm(\rmS^1,N^4)$ is~$1$--connected, so that it induces a bijection between the sets of (path) components and an epimorphism on fundamental groups. For the set of components, we have Proposition~\ref{prop:immersion_homotopy_groups}, and get
\[
\pi_0\Emb(\rmS^1,N^4)\cong\pi_0\Imm(\rmS^1,N^4)\cong\pi_0\Lambda \rmS(N)\cong\pi_0\Lambda N^4,
\]
and that set is in natural bijection with the set of conjugacy classes of elements in the fundamental group~$\pi_1N^4$. Note that this set only depends on the homotopy type of the~$4$--manifold~$N^4$.

We can now turn to~$\pi_1$ and~$\pi_2$. For the moment, let us assume, for simplicity, that the manifold~$N^4$ is simply-connected. Then the space~$\Imm(\rmS^1,N^4)$ is connected and the fundamental group of that space is~$\rmH_2(N^4;\bbZ)$ by Proposition~\ref{prop:immersion_homotopy_groups}. Consequently, it is known that the space~$\Emb(\rmS^1,N^4)$ is path connected and that the fundamental group (and the first homology group) of that space surjects onto the abelian group~$\rmH_2(N^4;\bbZ)$. Corollary~\ref{cor:main4} lets us substantially improve on this estimate.

Our first application concerns homotopy~$4$--spheres, where the earlier estimate (``$\pi_1\Emb(\rmS^1,\Sigma^4)$ surjects onto the trivial group'') was contentless. Using Corollary~\ref{cor:main4} we now get:

%%%

\begin{proposition}\label{Prop:exotic-4-spheres}
If~$\Sigma^4$ is a homotopy~$4$--sphere, then the space~$\Emb(\rmS^1,\Sigma^4)$ of knots in~$\Sigma^4$ is simply-connected.
\end{proposition}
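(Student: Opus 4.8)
The plan is to reduce to the standard sphere and then read off $\pi_1$ from the quadratic approximation. By Freedman's topological four--dimensional Poincar\'e conjecture, a homotopy $4$--sphere $\Sigma^4$ is homeomorphic to $\rmS^4$, so by Corollary~\ref{cor:main4} the space $\Emb(\rmS^1,\Sigma^4)$ has the same homotopy $1$--type as $\Emb(\rmS^1,\rmS^4)$; it therefore suffices to show that $\Emb(\rmS^1,\rmS^4)$ is path connected with trivial fundamental group. Path connectedness is immediate: the map $\Emb(\rmS^1,\rmS^4)\to\Imm(\rmS^1,\rmS^4)$ is $(n-3)=1$--connected, and by Proposition~\ref{prop:immersion_homotopy_groups} the set $\pi_0\Imm(\rmS^1,\rmS^4)$ is in bijection with the conjugacy classes of $\pi_1\rmS^4$, hence a point. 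For the fundamental group I would pass to the quadratic stage: since $\dim\rmS^4=4$, the approximation map $\Emb(\rmS^1,\rmS^4)\to\rmT_2\Emb(\rmS^1,\rmS^4)$ is $(2n-6)=2$--connected (Corollary~\ref{cor:main}), hence an isomorphism on $\pi_0$ and $\pi_1$, and the problem becomes to show $\pi_1\rmT_2\Emb(\rmS^1,\rmS^4)=0$.

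For this I would use the fibration sequence
\[
\Phi\longrightarrow\rmT_2\Emb(\rmS^1,\rmS^4)\longrightarrow\rmT_1\Emb(\rmS^1,\rmS^4)\simeq\Lambda\rmS(\rmS^4)
\]
over the constant loop considered in Proposition~\ref{prop: fiber}. The base $\Lambda\rmS(\rmS^4)$ is simply connected: the unit tangent bundle $\rmS(\rmS^4)$ is a $3$--sphere bundle over $\rmS^4$ whose homotopy exact sequence shows it is $2$--connected with $\pi_3\rmS(\rmS^4)\cong\bbZ/2$ (the boundary map $\pi_4\rmS^4\to\pi_3\rmS^3$ is multiplication by $\chi(\rmS^4)=2$), so $\Lambda\rmS(\rmS^4)$ is $1$--connected with $\pi_2\Lambda\rmS(\rmS^4)\cong\pi_3\rmS(\rmS^4)\cong\bbZ/2$. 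Since $\pi_0\rmT_2\Emb(\rmS^1,\rmS^4)=\ast$, the long exact sequence of the fibration shows $\Phi$ is connected and identifies $\pi_1\rmT_2\Emb(\rmS^1,\rmS^4)$ with the cokernel of the connecting homomorphism $\partial\colon\pi_2\Lambda\rmS(\rmS^4)\to\pi_1\Phi$.

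It remains to compute $\pi_1\Phi$ and $\partial$. By Proposition~\ref{prop: fiber}, $\Phi$ is $3$--connectedly equivalent to ${\Map_*}_{\Sigma_2}(C,\Omega\rmQ(\Omega\rmS^4)^{\tau})$, so $\pi_1\Phi\cong\pi_1{\Map_*}_{\Sigma_2}(C,\Omega\rmQ(\Omega\rmS^4)^{\tau})$. Here $\Omega\rmS^4$ is $2$--connected, so its Thom space $(\Omega\rmS^4)^{\tau}$ is $3$--connected with $\pi_4\cong\bbZ$, hence $\Omega\rmQ(\Omega\rmS^4)^{\tau}$ is $2$--connected with $\pi_3\cong\bbZ$; moreover $\Sigma_2$ acts trivially on this group, since loop reversal fixes the constant loop and $-1$ acts on the Thom class of the rank--$4$ bundle $\tau\rmS^4$ by $(-1)^4=1$. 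On the other side, by Example~\ref{ex:circle} the space $C$ is the cylinder $\rmS^1\times[0,2\pi]$ with its two boundary circles collapsed to a single point, the involution is free away from that point, and the quotient of the cylinder by the involution is a M\"obius band, so $C/\Sigma_2$ is homotopy equivalent to $\mathbb{R}\mathrm{P}^2$. Equivariant obstruction theory then identifies $\pi_1\Phi$ with the reduced cohomology $\widetilde{H}^2(\mathbb{R}\mathrm{P}^2;\bbZ)\cong\bbZ/2$.

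The main obstacle is the last step: proving that $\partial\colon\pi_2\Lambda\rmS(\rmS^4)\cong\bbZ/2\to\pi_1\Phi\cong\bbZ/2$ is an isomorphism, so that its cokernel, and hence $\pi_1\rmT_2\Emb(\rmS^1,\rmS^4)$, is trivial. Concretely one must show that the nontrivial element of $\pi_3\rmS(\rmS^4)\cong\pi_2\Lambda\rmS(\rmS^4)$ --- coming from the nontrivial class in $\pi_3$ of the fibre $\rmS^3$ of the unit tangent bundle $\rmS(\rmS^4)\to\rmS^4$ --- does not lift from $\rmT_1\Emb(\rmS^1,\rmS^4)$ to $\rmT_2\Emb(\rmS^1,\rmS^4)$. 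I would attack this by tracing that class through the homotopy pullback~\eqref{diagram: T2knots}; if the bookkeeping for the connecting map proves awkward, an alternative is to bypass $\Phi$ entirely and compute $\pi_0$ and $\pi_1$ of the homotopy limit of the finite diagram~\eqref{diagram: T2knots} for $N=\rmS^4$ directly, via its associated Bousfield--Kan spectral sequence, using the connectivity estimates above to see that only finitely many groups contribute.
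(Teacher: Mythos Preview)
Your reduction to the standard sphere via Freedman and Corollary~\ref{cor:main4} matches the paper exactly. From there the paper diverges sharply: it simply cites Budney~\cite[Prop.~3.9]{Budney}, who shows that $\Emb(\rmS^1,\rmS^4)$ has the homotopy $1$--type of the Stiefel manifold $\SO(5)/\SO(2)$, which is simply connected. Your attempt to extract $\pi_1\rmT_2\Emb(\rmS^1,\rmS^4)$ directly from Proposition~\ref{prop: fiber} is a genuinely different and more self-contained route, and your intermediate computations are correct: $(\Omega\rmS^4)^\tau$ is $3$--connected with $\pi_4\cong\bbZ$ carrying the trivial $\Sigma_2$--action, $C/\Sigma_2\simeq\mathbb{R}\mathrm{P}^2$, hence $\pi_1\Phi\cong\bbZ/2$, and likewise $\pi_2\Lambda\rmS(\rmS^4)\cong\bbZ/2$.

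The gap is exactly where you flag it. You do not determine the connecting map $\partial\colon\bbZ/2\to\bbZ/2$, and neither of the two attacks you sketch is carried out. This is not a formality: showing that the nontrivial class in $\pi_3\rmS(\rmS^4)$ (the fibre $\rmS^3$, taken mod~$2$) fails to lift to $\rmT_2$ is a genuine obstruction computation in the diagram~\eqref{diagram: T2knots}, and the Bousfield--Kan alternative repackages the same difficulty. Until that step is done your argument does not establish the proposition. The paper sidesteps the issue entirely by outsourcing the $\rmS^4$ case to the literature; what your approach would buy, if completed, is a proof that stays inside the machinery of the paper and does not rely on Budney's separate analysis of $\Emb(\rmS^1,\rmS^4)$.
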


\begin{proof}
If~$\Sigma^4=\rmS^4$ happens to be the standard~$4$--sphere, then the statement is known to be true: the embedding space~$\Emb(\rmS^1,\rmS^4)$ is simply-connected~(see~\cite[Prop.~3.9]{Budney}, for instance): the embedding space~$\Emb(\rmS^1,\rmS^4)$ has the same homotopy~$1$--type as the space of linear embeddings, the Stiefel manifold~$\SO(5)/\SO(2)$. In general, Freedman has shown that every homotopy~$4$--sphere~$\Sigma^4$ is homeomorphic to the standard~$4$--sphere~$\rmS^4$, and then our Corollary~\ref{cor:main4} implies the result.
\end{proof}

\begin{remark}
It is known that the space~$\Emb(\rmS^1,\rmS^4)$ is not~$2$--connected. In fact, there is an isomorphism~$\pi_2\Emb(\rmS^1,\rmS^4)\cong\bbZ$, as shown by Budney~\cite[Prop.~3.9~(3)]{Budney} (mind the typo in the statement there). 
\end{remark}

%%%

As another application of our results, we will now show that there are~$4$--manifolds~$N$ for which the inclusion~\hbox{$\Emb(\rmS^1, N)\to \Imm(\rmS^1, N)$} has a (very) non-trivial kernel on~$\pi_1$ and~$\pi_2$:

\begin{example}
Let us consider the space~$\Emb(\rmS^1,\rmS^3\times\rmS^1)$. Because the target is parallelizable, we have an equivalence
\begin{equation}\label{eq:product}
\Imm(\rmS^1,\rmS^3\times\rmS^1)\simeq \Lambda (\rmS^3\times \rmS^3\times \rmS^1).
\end{equation}
It follows that the space~$\Imm(\rmS^1,\rmS^3\times\rmS^1)$, and therefore also the space~$\Emb(\rmS^1,\rmS^3\times\rmS^1)$ has a countably infinite number of connected components, indexed by the map induced between the fundamental~(or first homology) groups. Notice that the path components of~$\Imm(\rmS^1,\rmS^3\times\rmS^1)$ are homotopy equivalent to each other. We also have an analogous statement for the homotopy fibers of the map from~$\rmT_2 \Emb(\rmS^1,\rmS^3\times\rmS^1)$ to~$\rmT_1 \Emb(\rmS^1,\rmS^3\times\rmS^1)\simeq\Imm(\rmS^1,\rmS^3\times\rmS^1)$ over different path components:

\parbox{\linewidth}{\begin{proposition}\label{proposition: same fibers}
The homotopy type of the homotopy fiber of the map
\begin{equation}\label{eq: 2to1}
\rmT_2 \Emb(\rmS^1,\rmS^3\times\rmS^1)\longrightarrow\rmT_1 \Emb(\rmS^1,\rmS^3\times\rmS^1)
\end{equation}
is the same for every choice of basepoint.
In all cases, the homotopy fiber is equivalent to the space
\[
{\Map_{*}}_{\Sigma_2}(\rmS^1\times \rmS^1/\rmS^1, F)
\]
of equivariant pointed maps, where~$F$ is the homotopy fiber of the inclusion~$\rmS^3\times\rmS^1 \setminus{(1,1)}\to \rmS^3\times  \rmS^1$.
The action of the group~$\Sigma_2$ on~$F$ is defined via the action on~$\rmS^3$ and~$\rmS^1$ that sends an element of~$\rmS^1$ or~$\rmS^3$ to its inverse as a complex number or quaternion.
\end{proposition}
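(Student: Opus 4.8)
The plan is to apply Theorem~\ref{thm:T2description} together with the analysis in Proposition~\ref{prop: fiber}, specialized to the case $N = \rmS^3\times\rmS^1$. The key simplification is that $\rmS^3\times\rmS^1$ is parallelizable: as a Lie group it has trivial tangent bundle, so $\rmS(N) \cong N \times \rmS^3$ and the diagram $\rmS(N)\to N^{[2]}\to N\times N$ splits off the trivial sphere factor. In particular the spherical tangent bundle plays no essential role, and the homotopy fiber should reduce to mapping spaces built out of $F$, the homotopy fiber of the inclusion $\rmS^3\times\rmS^1\setminus\{(1,1)\} \to \rmS^3\times\rmS^1$, exactly as identified in the proof of Proposition~\ref{prop: fiber} where $\widetilde\Sigma F \simeq (\Omega N)^\tau$.

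First I would observe that the homotopy fiber of~\eqref{eq: 2to1} over a basepoint is, by diagram~\eqref{diagram: T2knots}, the total homotopy fiber of the square
\[
\xymatrix{
\Map_{\Sigma_2}((\rmS^1)^{[2]},N^{[2]})  \ar[r]\ar[d] &
 \Map_{\Sigma_2}((\rmS^1)^{[2]}, \ {N\times N})\ar[d]\\
 \Map(\rmS^1, N^{[2]}) \ar[r] & \Map(\rmS^1, \ {N\times N}),
}
\]
with $N = \rmS^3\times\rmS^1$. As in Proposition~\ref{prop: fiber}, taking horizontal fibers first replaces the target pair $(N^{[2]}, N\times N)$ by $F$, and the total fiber becomes the homotopy fiber of $\Map_{\Sigma_2}((\rmS^1)^{[2]},F)\to\Map_{\Sigma_2}(\rmS(\rmS^1),F)$. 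The crucial point for the \emph{basepoint-independence} claim is that the loop space $\Lambda\rmS(N) \simeq \Lambda(N\times\rmS^3) = \Lambda N \times \Lambda\rmS^3$ acts on this construction through the $\Lambda N$-factor, and since $N = \rmS^3\times\rmS^1$ is a topological group, $\Lambda N$ is group-like and acts transitively up to homotopy on its own components; translating a basepoint in one component to another is realized by an honest self-homotopy-equivalence of the whole square above. Concretely, left translation by a loop $\gamma$ in $N$ induces compatible self-equivalences of $N^{[2]}$, $N\times N$, $\rmS(N)$ covering the translation on $N$, hence a self-equivalence of the square that carries one basepoint to another; this proves the first sentence.

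For the identification of the fiber, I would note two things special to this $N$. Because $N$ is parallelizable one does not need to pass to the Thom space and stabilize: the unpointed mapping space can be handled directly since $F$ is already $2$-connected and $(\rmS^1)^{[2]}$ is $2$-dimensional, so extensions exist and the fiber of $\Map_{\Sigma_2}((\rmS^1)^{[2]},F)\to\Map_{\Sigma_2}(\rmS(\rmS^1),F)$ is literally $\Map_{*,\Sigma_2}(C,F)$ where $C$ is the cofiber of $\rmS(\rmS^1)\to(\rmS^1)^{[2]}$. By Example~\ref{ex:circle} this cofiber is $\rmS^1\times\rmS^1/\rmS^1$ with its evident $\Sigma_2$-action. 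Finally, I would unwind the $\Sigma_2$-action on $F$: it is inherited from the flip on $N\times N$ through the identification of $F$ with the homotopy fiber of $N\setminus\{(1,1)\}\to N$, and under the parallelization the flip on $\rmS^3\times\rmS^1$ fixing the basepoint $(1,1)$ becomes (up to homotopy) the map inverting each of the group factors; since $\rmS^3$ and $\rmS^1$ are groups this inversion fixes the identity, so the construction of $F$ is compatible with it, giving the stated action.

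The main obstacle I anticipate is the basepoint-independence argument: one must be careful that the translation self-equivalences of $N^{[2]}$ and $\rmS(N)$ are genuinely $\Sigma_2$-equivariant and fit into a map of the \emph{entire} square~\eqref{diagram: T2knots} (not merely of the individual mapping spaces), and that the induced self-equivalence of the total homotopy fiber sends the basepoint corresponding to one component of $\Lambda\rmS(N)$ to the basepoint of another. This is where one genuinely uses that $N$ is a group rather than just parallelizable; the equivariance is routine once phrased correctly, but it is the step that requires the most care.
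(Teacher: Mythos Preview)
Your identification of the fiber over a \emph{constant} loop as ${\Map_*}_{\Sigma_2}(C,F)$ with $C\cong\rmS^1\times\rmS^1/\rmS^1$ is essentially the argument of Proposition~\ref{prop: fiber}, and that part is fine. The difficulty is the basepoint-independence, and here your translation argument has a genuine gap.

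You propose that ``left translation by a loop $\gamma$ in $N$ induces compatible self-equivalences of $N^{[2]}$, $N\times N$, $\rmS(N)$.'' But a single element $g\in N$ gives, via the diagonal action $(a,b)\mapsto(ga,gb)$, a self-homeomorphism of $N\times N$ preserving the diagonal and hence lifting to $N^{[2]}$; however translation by a fixed element does not move between path components of $\Lambda N$. To change component you must multiply pointwise by a nontrivial loop $\gamma$, and then on $\Map_{\Sigma_2}((\rmS^1)^{[2]},N\times N)$ the only $\Sigma_2$--equivariant candidate is $f(z_1,z_2)\mapsto(\gamma(z_1),\gamma(z_2))\cdot f(z_1,z_2)$. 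This does \emph{not} preserve the off-diagonal (take $(a,b)$ with $a\ne b$ but $\gamma(z_1)^{-1}\gamma(z_2)=ab^{-1}$), so it does not lift to a self-map of $\Map_{\Sigma_2}((\rmS^1)^{[2]},N^{[2]})$, and the square is not carried to itself. Relatedly, pointwise multiplication by a loop does not even preserve $\Emb(\rmS^1,N)$: with $N=\rmS^3\times\rmS^1$, the embedding $z\mapsto(1,z)$ multiplied by $\gamma(z)=(1,z^{-1})$ becomes constant. A second, related issue: your ``take horizontal fibers first'' step, which replaces the target pair by a single fiber $F$, is only valid when the basepoint in $\Map_{\Sigma_2}((\rmS^1)^{[2]},N\times N)$ is a constant map; for a nonconstant basepoint the horizontal fiber is a space of sections of a pulled-back fibration, not simply $\Map_{\Sigma_2}((\rmS^1)^{[2]},F)$.

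The paper's proof avoids both problems by a different use of the group structure: rather than translating, it passes to the quotient. The $\Sigma_2$--equivariant fibration $G\to G\times G\to\widehat G$, $(g_1,g_2)\mapsto g_1^{-1}g_2$, replaces the target square $(N\times N\setminus N,\,N\times N)$ by $(\widehat N\setminus\{e\},\,\widehat N)$. The point is that under this quotient map, the image of each representative basepoint $\alpha_n(z)=(i(z),z^n)$ restricted to the middle circle $\widetilde{\rmS^1}=\{(z,-z)\}\subset(\rmS^1)^{[2]}$ becomes the \emph{constant} map $(z,-z)\mapsto(-1,(-1)^n)$. Once the basepoints are constant, the total fiber is visibly ${\Map_*}_{\Sigma_2}(\rmS^1\times\rmS^1/\rmS^1,F_n)$, and a final observation (multiplication by $-1$ on $\widehat\rmS^1$) shows the $F_n$ are all $\Sigma_2$--equivalent. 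This quotient trick is the missing idea; it also explains directly why the $\Sigma_2$--action on $F$ is by inversion, since that is precisely how the swap on $G\times G$ descends to $\widehat G$.
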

}

\begin{proof}
It is enough to verify the proposition for a choice of one basepoint in each path component.
We know that the following maps all induce a bijection on~$\pi_0$.
\[
\Emb(\rmS^1,\rmS^3\times\rmS^1)\longrightarrow\rmT_2 \Emb(\rmS^1,\rmS^3\times\rmS^1) 
\longrightarrow \rmT_1 \Emb(\rmS^1,\rmS^3\times\rmS^1) \longrightarrow \Lambda(\rmS^3\times\rmS^3 \times\rmS^1).
\]
We want to choose a convenient set of representative basepoints. For the purpose of this proof, let us identify~$\rmS^1$ with the circle of unit complex numbers and~$\rmS^3$ with the unit quaternions. The point~$(-1, -1)$ is our basepoint for~\hbox{$\rmS^3\times\rmS^1$}. Let~$i\colon \mathbb C\to \mathbb H$ be the inclusion of the complex numbers into the quaternions. Let~$\alpha_n\colon \rmS^1\to \rmS^3 \times \rmS^1$ be the map defined by~\hbox{$\alpha_n(z)=(i(z),z^n)$}. Then~$\{\alpha_n\mid n\in \mathbb Z\}$ is a complete set of representatives of the path components of~$\Emb(\rmS^1,\rmS^3\times\rmS^1)$ and therefore their images give a complete set of representatives of the path components in the other spaces, too.
We will show that the homotopy fibers of the map~\eqref{eq: 2to1} are pairwise homotopy equivalent for all these basepoints.

We have seen in Theorem~\ref{thm:T2description} and/or diagram~\eqref{diagram: T2knots}, that the homotopy fiber of~\eqref{eq: 2to1} is equivalent to the total homotopy fiber of the following square diagram.
\begin{equation}\label{eq: total fiber}
\xymatrix{
\Map_{\Sigma_2}((\rmS^1)^{[2]}, (\rmS^3\times \rmS^1)^{[2]})  \ar[r]\ar[d] & 
 \Map_{\Sigma_2}((\rmS^1)^{[2]}, \ {\rmS^3\times \rmS^1\times \rmS^3\times \rmS^1})\ar[d]\\
 \Map_{\Sigma_2}(\rmS(\rmS^1), (\rmS^3\times \rmS^1)^{[2]}) \ar[r] & \Map_{\Sigma_2}(\rmS(\rmS^1), \ {\rmS^3\times \rmS^1\times \rmS^3\times \rmS^1})   
}
\end{equation}
Of course ``the'' total homotopy fiber depends on a choice of basepoint in~$\Map_{\Sigma_2}((\rmS^1)^{[2]}, (\rmS^3\times \rmS^1)^{[2]})$. Our task is to compare the total homotopy fibers for basepoints given by the images of the embeddings~$\alpha_n$ defined above in~$\Map_{\Sigma_2}((\rmS^1)^{[2]}, (\rmS^3\times \rmS^1)^{[2]})$. The image of~$\alpha_n$ in~$\Map_{\Sigma_2}((\rmS^1)^{[2]}, (\rmS^3\times \rmS^1)^{[2]})$ is the map that sends a point~\hbox{$(z_1, z_2)\in \rmS^1\times \rmS^1\setminus \rmS^1$} to the point \[((i(z_1), z_1^n), (i(z_2), z_2^n))\in  (\rmS^3\times \rmS^1)\times  (\rmS^3\times \rmS^1)\setminus  \rmS^3\times \rmS^1.\]  

We will simplify diagram~\eqref{eq: total fiber} in three steps. 

(Step 1) Recall from Example~\ref{ex:circle} that there is a homeomorphism~$(\rmS^1)^{[2]}\cong \rmS^1\times [0,2\pi]$. The middle circle~$\rmS^1\times \{\pi\}$ corresponds under this homeomorphism to the subspace~$\widetilde{\rmS^1}:=\{(z, -z)\in (\rmS^1)^{[2]}\mid z\in\rmS^1\}$. With this homeomorphism it is clear that the circle~$\widetilde{\rmS^1}$ is a~$\Sigma_2$-equivariant strong deformation retract of~$(\rmS^1)^{[2]}$. From here it follows that the total homotopy fiber of~\eqref{eq: total fiber} is equivalent to the total homotopy fiber of the following diagram.
\begin{equation}\label{eq: middle}
\xymatrix{
\Map_{\Sigma_2}(\widetilde{\rmS^1}, (\rmS^3\times \rmS^1)^{[2]})  \ar[r]\ar[d] & 
 \Map_{\Sigma_2}(\widetilde{\rmS^1}, \ {\rmS^3\times \rmS^1\times \rmS^3\times \rmS^1})\ar[d]\\
 \Map_{\Sigma_2}(\rmS(\rmS^1), (\rmS^3\times \rmS^1)^{[2]}) \ar[r] & \Map_{\Sigma_2}(\rmS(\rmS^1), \ {\rmS^3\times \rmS^1\times \rmS^3\times \rmS^1})   
}
\end{equation}
To remember that the circle~$\widetilde{\rmS^1}$ with the antipodal action arose as the middle of~$(\rmS^1)^{[2]}$, we continue denoting the elements of~$\widetilde{\rmS^1}$ as pairs~$(z, -z)$, where~$z$ is a unit complex number. The vertical maps in the diagram are induced by the canonical~$\Sigma_2$-equivariant maps~$\rmS(\rmS^1)\cong \Sigma_2\times \rmS^1\to \widetilde{\rmS^1}$. Our task now is to compare the total fibers of the diagram~\eqref{eq: middle} with basepoints in~$\Map_{\Sigma_2}(\widetilde{\rmS^1}, (\rmS^3\times \rmS^1)^{[2]})~$ given by maps
\begin{equation}\label{eq:beta}
(z, -z) \mapsto ((i(z), z^n), (-i(z), (-z)^n))
\end{equation}
where~$n\in {\mathbb Z}$.

(Step 2) Notice that the boundary of~$(\rmS^3\times \rmS^1)^{[2]}$ is not in the image of any of the maps that serve as basepoints of the mapping space~$\Map_{\Sigma_2}(\widetilde{\rmS^1}, (\rmS^3\times \rmS^1)^{[2]})$. It follows that in diagram~\eqref{eq: middle} we may replace the space~$(\rmS^3\times \rmS^1)^{[2]}$ with the homotopy equivalent space~$(\rmS^3\times \rmS^1)^{2}\setminus \rmS^3\times\rmS^1$. So we are now interested in the total homotopy fiber of the following diagram.
\begin{equation}\label{eq: blowdown}
\xymatrix{
\Map_{\Sigma_2}(\widetilde{\rmS^1}, (\rmS^3\times \rmS^1)^{2}\setminus \rmS^3\times\rmS^1)  \ar[r]\ar[d] & 
 \Map_{\Sigma_2}(\widetilde{\rmS^1}, \ {(\rmS^3\times \rmS^1)^2})\ar[d]\\
 \Map_{\Sigma_2}(\rmS(\rmS^1), (\rmS^3\times \rmS^1)^{2}\setminus \rmS^3\times\rmS^1) \ar[r] & \Map_{\Sigma_2}(\rmS(\rmS^1), \ {(\rmS^3\times \rmS^1)^2})   
}
\end{equation}

(Step 3) Our last step is to simplify further the total homotopy fiber of~\eqref{eq: blowdown}, using the fact that~$\rmS^3\times \rmS^1$ is a Lie group. 

Suppose~$G$ is a Lie group. Let~$\widehat{G}$ denote the underlying space of~$G$, equipped with the~$\Sigma_2$--action that sends an element to its inverse. There is a fibration sequence
\[
G  \xrightarrow{g\mapsto  (g, g)}  G\times G  \xrightarrow{(g_1, g_2)  \mapsto  g_1^{-1}g_2 }  \widehat{G} 
\]
of spaces with~$\Sigma_2$--action, when we let $\Sigma_2$ act trivially on $G$ and by interchange-of-factors on $G\times G$. The pre-image of~$\widehat{G}\setminus \{e\}$ in~$G\times G$ is~$G\times G \setminus G$. It follows that there is a square
\[
\xymatrix@C=7em{
G\times G\setminus G\ar[d]_-{(g_1, g_2)\mapsto g_1^{-1}g_2}\ar[r]^-{\subset} & G\times G\ar[d]^-{(g_1, g_2)\mapsto g_1^{-1}g_2}\\
\widehat{G}\setminus\{e\}\ar[r]_-{\subset} & G
}
\]
that is both a pullback and a homotopy pullback.

Now taking~$G$ to be~$\rmS^3\times \rmS^1$, we conclude that the total fiber of~\eqref{eq: blowdown} is equivalent to the total fiber of the following square.
\begin{equation}\label{eq: cutdown}
\xymatrix{
\Map_{\Sigma_2}(\widetilde{\rmS^1}, \widehat\rmS^3\times \widehat\rmS^1 \setminus{(1,1)})  \ar[r]\ar[d] & 
 \Map_{\Sigma_2}(\widetilde{\rmS^1}, \ {\widehat\rmS^3\times \widehat\rmS^1})\ar[d]\\
 \Map_{\Sigma_2}(\rmS(\rmS^1), \widehat\rmS^3\times \widehat\rmS^1\setminus{(1,1)} \ar[r] & \Map_{\Sigma_2}(\rmS(\rmS^1), \ {\widehat\rmS^3\times \widehat\rmS^1})   
}
\end{equation}
Here~$\widehat \rmS^3$ and~$\widehat \rmS^1$ indicate the spheres~$\rmS^3$ and~$\rmS^1$ endowed with the action of the group~$\Sigma_2$ that sends an element~$z$ to its inverse~$z^{-1}$, and the map from~\eqref{eq: blowdown} to~\eqref{eq: cutdown} that induces an equivalence of total fibers is induced by the quotient map~$(\rmS^3 \times \rmS^1)\times(\rmS^3 \times \rmS^1)\to \widehat\rmS^3 \times \widehat\rmS^1$, defined by the formula~$(w_1, w_2)\mapsto w_1^{-1} w_2$. 

This finishes our simplification of diagram~\eqref{eq: total fiber}, and we can now describe the total homotopy fibers with respect to the various base points. Recall from~\eqref{eq:beta} that the representatives of the basepoints in the space~\hbox{$\Map_{\Sigma_2}(\widetilde{\rmS^1}, (\rmS^3\times \rmS^1)^2 \setminus \rmS^3\times \rmS^1)$} are given by the maps
\[
(z, -z)\mapsto ((i(z), z^n), (-i(z), (-z)^n))
\]
for~\hbox{$n\in\bbZ$}. The image of this basepoint in the upper left corner~$\Map_{\Sigma_2}(\widetilde{\rmS^1}, \widehat \rmS^3\times \widehat \rmS^1 \setminus{(1,1)})$ of the simplified square~\eqref{eq: cutdown} is the map that sends~$(z,-z)\in\widetilde \rmS^1$ to~$(-1,(-1)^n)$.
We see that, for any given $n\in\bbZ$, the induced basepoint is a {\em constant} map.
Therefore, for every $n\in\bbZ$, the total homotopy fiber of~\eqref{eq: cutdown} for the $n$--th basepoint is equivalent to the space 
\[
{\Map_{*}}_{\Sigma_2}(\rmS^1\times \rmS^1/\rmS^1, F_n)
\]
of pointed~$\Sigma_2$-equivariant maps, where~$\rmS^1\times \rmS^1/\rmS^1$ arises as the homotopy cofiber of the inclusion~\hbox{$\rmS(\rmS^1)\to\widetilde\rmS^1$}, and where the space~$F_n$ is the homotopy fiber of the inclusion~$\widehat \rmS^3\times \widehat \rmS^1 \setminus{(1,1)}\to \widehat \rmS^3\times \widehat \rmS^1$ over~$(-1, (-1)^n)$.
This already shows that there can be at most two different homotopy types of homotopy fibers: one for~$n$ even and one for~$n$ odd, because~$F_n$ depends only on the parity of~$n$ by definition. To resolve the remaining ambiguity, we remark that the spaces~$F_n$ are all~$\Sigma_2$--homotopy equivalent, because multiplication by~$-1$ induces a~$\Sigma_2$--equivariant homeomorphism from~$\widehat\rmS^1$ to itself that sends $+1$ to $-1$.
\end{proof}

It is elementary to deduce from the equivalence~\eqref{eq:product} that there are isomorphisms~$\pi_1(\Imm(\rmS^1,\rmS^3\times\rmS^1))\cong\bbZ$ as well as~\hbox{$\pi_2(\Imm(\rmS^1,\rmS^3\times\rmS^1))\cong\bbZ\oplus\bbZ$}, and all higher homotopy groups are finitely generated, too. In contrast:

\begin{proposition}\label{prop:homotopy_fiber}
Both~$\pi_1$ and~$\pi_2$ of each homotopy fiber of the map
\[
\Emb(\rmS^1,\rmS^3\times\rmS^1) \longrightarrow \Imm(\rmS^1,\rmS^3\times\rmS^1)
\]
are abelian and contain an infinitely generated free abelian group. In particular, they are not finitely generated.
\end{proposition}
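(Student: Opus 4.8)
The plan is to combine Proposition~\ref{proposition: same fibers} with the Goodwillie--Klein connectivity estimate to reduce the question to a concrete calculation about the mapping space $M={\Map_{*}}_{\Sigma_2}(\rmS^1\times\rmS^1/\rmS^1,F)$, where $F$ is the homotopy fiber of $\rmS^3\times\rmS^1\setminus\{(1,1)\}\to\rmS^3\times\rmS^1$ with its free $\Sigma_2$--action. First I would note that, since $\dim(\rmS^3\times\rmS^1)=4$, the approximation map $\Emb(\rmS^1,\rmS^3\times\rmS^1)\to\rmT_2\Emb(\rmS^1,\rmS^3\times\rmS^1)$ is $(2n-6)=2$--connected, hence an isomorphism on $\pi_1$ and a surjection on $\pi_2$; therefore the homotopy fiber of $\Emb\to\Imm$ agrees with the homotopy fiber of $\rmT_2\Emb\to\rmT_1\Emb$ on $\pi_1$ and surjects onto its $\pi_2$ — wait, I need to be careful about direction here, so instead I would run the five-lemma on the long exact sequences of the two fibration sequences and conclude that the homotopy fiber of $\Emb\to\Imm$ has a $\pi_1$ isomorphic to that of the Proposition~\ref{proposition: same fibers} fiber and a $\pi_2$ that maps onto it. Since "contains an infinitely generated free abelian group" passes to quotients only in the wrong direction, I would in fact want the cleaner statement: the map of fibers is $2$--connected, so it is an isomorphism on $\pi_1$ and a surjection on $\pi_2$, and I would then separately check that $\pi_2$ of the model space is itself free abelian (so any preimage containing infinitely many independent elements suffices). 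It is cleanest to prove directly that $\pi_1 M$ and $\pi_2 M$ are free abelian of infinite rank and that the map from $\pi_i$ of the true fiber is surjective for $i\le 2$; surjectivity onto an infinitely generated free abelian group forces the source to be infinitely generated as well.

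Next I would analyze the space $M$. The space $F$ is the homotopy fiber of a $3$--connected inclusion (removing a point from a $4$--manifold), so $F$ is $2$--connected and, by excision/Blakers--Massey, $\pi_3 F\cong\bbZ$ with higher groups those of $\rmS^3$ in a range; equivalently $F$ is, through dimension roughly $5$, a copy of $S^3$, but with a specific $\Sigma_2$--action. The key point is to identify this action: the involution on $\rmS^3\times\rmS^1\setminus\{(1,1)\}$ is $w\mapsto w^{-1}$ on each factor (from the Lie-group simplification in the proof of Proposition~\ref{proposition: same fibers}), which fixes $(1,1)$ and $(-1,-1)$ and so on the "local sphere" $\pi_3 F\cong\bbZ$ it acts by a sign that I would pin down by a linking/degree computation near the removed point; the antipodal-type description in Example~\ref{ex:Euclidean} together with the orientation-reversal of $z\mapsto z^{-1}$ on the $\rmS^1$ factor should give the sign $(-1)^{?}$. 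Meanwhile $\rmS^1\times\rmS^1/\rmS^1$ — the cofiber of $\rmS(\rmS^1)\to(\rmS^1)^{[2]}$ — is, by Example~\ref{ex:circle}, the cofiber of $\rmS^1\sqcup\rmS^1\hookrightarrow \rmS^1\times[0,2\pi]$ collapsing both ends, which is $\rmS^2$ with the $\Sigma_2$--action coming from $(z,t)\mapsto(z,2\pi-t)$ on the cylinder; this action is free on the central circle and swaps the two collapsed ends, so on $\rmS^2$ it is a reflection (order-two, non-free, with a fixed circle) — I would identify it precisely. So $M\simeq {\Map_*}_{\Sigma_2}(\rmS^2_\sigma,\, S^3_\epsilon)$ for explicit involutions $\sigma,\epsilon$, in a range of dimensions.

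The computation of $\pi_1 M$ and $\pi_2 M$ then proceeds via equivariant obstruction theory / the Bredon cohomology spectral sequence, or more concretely by a cofiber sequence built from the $\Sigma_2$--CW structure of $\rmS^2_\sigma$ (fixed circle plus a free $2$--cell). This expresses $M$ as a homotopy pullback involving ordinary mapping spaces $\Map_*(\rmS^2,F)$, $\Map_*(\rmS^1,F)=\Omega F$ on the fixed set, and a twisted-loops term for the free cell; $\pi_i M$ for $i\le 2$ then sits in an exact sequence assembled from $\pi_*F\cong\bbZ$ in degree $3$. Concretely I expect $\pi_1 M$ to be a sum of copies of $\pi_3 F$ indexed by the free cells/components and hence $\cong\bigoplus\bbZ$ — but the genuinely infinite rank must come from somewhere, and the natural source is that $\Map_*(\rmS^1,F_+)$-type factor is really an infinite disjoint union or has infinitely many components because $F$, being only $2$--connected but not simply a $K(\bbZ,3)$, carries infinitely many homotopy classes — no: the infinite generation comes from the component structure of a mapping space out of a $1$--complex into a space with nontrivial $\pi_3$, unwound via the free loop space $\Lambda S^3$ whose homology is infinitely generated. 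I would make this precise by observing that hidden inside $M$ (after stabilizing, as in Proposition~\ref{prop: fiber}, replacing $F$ by $\Omega\rmQ(\Omega N)^\tau$) there is a factor of the form $\Omega\rmQ$ of an infinite wedge, whose low homotopy groups are visibly infinitely generated free abelian; transferring back along the finitely-many-connectivity maps preserves infinite rank of $\pi_1,\pi_2$. \textbf{The main obstacle} is the bookkeeping of the $\Sigma_2$--action — getting the sign on $\pi_3 F$ right and correctly identifying the equivariant cell structure of $\rmS^2_\sigma$ — since these signs determine whether the relevant equivariant mapping space is built from $\Map_*(\rmS^2,F)$ (giving $\pi_1=\pi_3F$, finite rank) or from a twisted version whose component set, equivalently a set of orbits of a nontrivial $\bbZ[\Sigma_2]$--action on an infinitely generated group such as $\pi_1\Lambda\rmQ(\bigvee S^3)$, is infinite. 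Once the action is identified, the infinite generation is a routine consequence of the infinite generation of $\pi_*\rmQ(\bigvee_{\bbZ} S^k)$ (equivalently, of $\bigoplus_{\bbZ}\pi_*^s$), and abelianness of $\pi_1$ follows because $M$ is a double loop space in the relevant range (it is $\Omega$ of an equivariant mapping space into $\Omega\rmQ(-)$).
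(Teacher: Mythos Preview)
Your reduction via Proposition~\ref{proposition: same fibers} and the $2$--connectedness of~$\Emb\to\rmT_2\Emb$ to the mapping space $M={\Map_*}_{\Sigma_2}(C,F)$ is exactly the paper's first move, but your analysis of~$F$ contains a real error that derails the rest. You assert, via excision/Blakers--Massey, that $\pi_3 F\cong\bbZ$ and that $F$ looks like~$\rmS^3$ in a range. This is false because $N=\rmS^3\times\rmS^1$ is \emph{not} simply connected: the pair $(N\setminus\{x\},\partial D^4)$ is only $0$--connected, so the excision estimate gives nothing at~$\pi_3$. Concretely, $N\setminus\{x\}\simeq\rmS^3\vee\rmS^1$, and the Ganea/Whitehead-product fibration $\Sigma(\Omega A\wedge\Omega B)\to A\vee B\to A\times B$ with $\Omega\rmS^1\simeq\bbZ$ gives
\[
F\ \simeq\ \Sigma(\Omega\rmS^3\wedge\Omega\rmS^1)\ \simeq\ \bigvee_{n\in\bbZ}\Sigma\Omega\rmS^3\ \simeq\ \bigvee_{n\in\bbZ}(\rmS^3\vee\rmS^5\vee\cdots),
\]
so already $\pi_3 F\cong\bigoplus_{\bbZ}\bbZ$. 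This infinite wedge, indexed by~$\pi_1 N$, \emph{is} the source of the infinite generation; you grope toward it later via~$\Omega N$ and Proposition~\ref{prop: fiber}, but once~$F$ is identified directly no stabilization is needed. Your description of the $\Sigma_2$--action on~$C$ is also wrong (the action on~$(\rmS^1)^{[2]}$ is free, so the only fixed point of~$C$ is the collapsed basepoint, not a circle), which would wreck the equivariant-obstruction-theory route you sketch.

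The paper sidesteps the geometry of~$C$ entirely by using the free-cell cofiber sequence ${\Sigma_2}_+\wedge\rmS^1\xrightarrow{a\cdot\sigma(a)}{\Sigma_2}_+\wedge\rmS^1\to C$, which by adjunction gives a fibration
\[
M\longrightarrow\Omega F\xrightarrow{\ 1+\sigma\ }\Omega F.
\]
On $\pi_2\Omega F=\pi_3 F\cong\prod_{n}\bbZ(n)$ the involution swaps $n\leftrightarrow -n$, so for each $n>0$ the map $1+\sigma$ sends $(i,j)\in\bbZ(n)\times\bbZ(-n)$ to $(i+j,i+j)$; both kernel and cokernel visibly contain $\bigoplus_{n>0}\bbZ$. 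The long exact sequence then exhibits $\pi_1 M$ as that cokernel (hence abelian, as a quotient of an abelian group, without any loop-space argument) and shows $\pi_2 M$ surjects onto the kernel. For the transfer to the genuine fiber, note that surjecting onto an infinitely generated free abelian group is enough: free abelian groups are projective, so the surjection splits and the source contains a copy.
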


\begin{proof}
Because of the equivalence~\hbox{$\rmT_1\Emb(\rmS^1,\rmS^3\times\rmS^1)\simeq\Imm(\rmS^1,\rmS^3\times\rmS^1)$} and the fact that the approximation~\hbox{$\Emb(\rmS^1,\rmS^3\times\rmS^1)\to\rmT_2 \Emb(\rmS^1,\rmS^3\times\rmS^1)$} is~$2$--connected, it is sufficient to prove the statement for the homotopy fibers of the map $\rmT_2 \Emb(\rmS^1,\rmS^3\times\rmS^1)\to\rmT_1 \Emb(\rmS^1,\rmS^3\times\rmS^1)$. By Proposition~\ref{proposition: same fibers} all of these homotopy fibers are equivalent to the mapping space~\hbox{${\Map_{*}}_{\Sigma_2}(\rmS^1\times \rmS^1/\rmS^1, F)$}, where~$F$ is the homotopy fiber of the map~$\rmS^3\times \rmS^1 \setminus (1,1) \to \rmS^3\times \rmS^1~$. 

As for the $\Sigma_2$--homotopy type of the homotopy fiber~$F$, the space~$\rmS^3\times \rmS^1 \setminus (1,1)$ is homotopy equivalent to~$\rmS^3\vee \rmS^1$, where we can take the wedge point to be~$(-1, -1)$. Let us recall that the group~$\Sigma_2$ is acting on~$\rmS^1$ and~$\rmS^3$ by taking each element to its inverse~(as elements in~$\bbC$ or~$\bbH$). 
The Whitehead product fibration $\Sigma(\Omega A\wedge \Omega B)\to A\vee B\to A\times B$ shows that we have equivalences
\begin{equation}\label{eq:F}
F\simeq
\Sigma(\Omega\rmS^3\wedge\Omega\rmS^1)\simeq
\bigvee_{n\in \mathbb Z} \Sigma \Omega \rmS^3\simeq
\bigvee_{n\in \mathbb Z} \rmS^3\vee \rmS^5\vee \rmS^7\ldots,
\end{equation}
where the last equivalence comes from the James splitting: the space~$\Sigma \Omega \rmS^3$ is homotopy equivalent to the wedge sum~$\rmS^3\vee\rmS^5\vee\rmS^7\ldots$. The action of the group~$\Sigma_2$ on the indexing set~$\mathbb Z$ sends~$n$ to its inverse~$-n$. 

As for the space~$\rmS^1\times\rmS^1/\rmS^1$, it fits into a cofibration sequence 
\[
{\Sigma_2}_+ \wedge\rmS^1 \overset{f}{\longrightarrow}
{\Sigma_2}_+ \wedge\rmS^1 \longrightarrow \rmS^1\times\rmS^1/\rmS^1,
\]
of pointed~$\Sigma_2$--equivariant maps. By adjunction, any such map~$f$ is described uniquely by an element in~\hbox{$\pi_1({\Sigma_2}_+ \wedge\rmS^1)$}, which is the free group on two generators, say~$a$ and~$\sigma(a)$, where~$\sigma$ is the non-trivial element of the group~$\Sigma_2$. The map~$f$ in question is~$a\cdot\sigma(a)$.

For any space~$F$, using the identifications~\hbox{${\Map_*}_{\Sigma_2}({\Sigma_2}_+ \wedge\rmS^1,F)\cong{\Map_*}(\rmS^1,F)=\Omega F$}, it follows that  with an action~$\sigma$ of the group~$\Sigma_2$, the mapping space~${\Map_*}_{\Sigma_2}\!\left(\rmS^1\times\rmS^1/\rmS^1, F\right)$ fits in a fibration sequence
\[
{\Map_*}_{\Sigma_2}\!\left(\rmS^1\times\rmS^1/\rmS^1, F\right) \longrightarrow \Omega F \xrightarrow{1+\sigma} \Omega F,
\]
where addition means loop multiplication.

Now let us take~$F$ to be what it was before, as in~\eqref{eq:F}.
We obtain a fibration sequence
\[
{\Map_*}_{\Sigma_2}\!\left(\rmS^1 \times \rmS^1/\rmS^1, F\right) \longrightarrow
\Omega (\bigvee_{n\in\mathbb Z}\rmS^3 \vee \rmS^5 \cdots) \longrightarrow 
\Omega(\bigvee_{n\in\mathbb Z} \rmS^3\vee\rmS^5 \cdots).
\]
Taking the homotopy long exact sequence, and focusing on~$\pi_2$ and~$\pi_1$, we obtain the following exact sequence.
\[
\pi_2{\Map_*}_{\Sigma_2}\!\left(\rmS^1 \times \rmS^1/\rmS^1, F\right) \longrightarrow 
\prod_{n=-\infty}^{\infty} \bbZ(n) \longrightarrow 
\prod_{n=-\infty}^{\infty} \bbZ(n) \longrightarrow 
\pi_1{\Map_*}_{\Sigma_2}\!\left(\rmS^1 \times \rmS^1/\rmS^1, F\right) \longrightarrow 0
\]
Here~$\bbZ(n)$ denotes a copy of the group~$\bbZ$ corresponding to the index~$n$. The homomorphism in the middle splits as a product of a homomorphism~$\bbZ(0)\to\bbZ(0)$, which we do not need to determine, 
and, for each~$n>0$, the homomorphism~$\bbZ(n)\times \bbZ(-n) \to \bbZ(n)\times \bbZ(-n)$ that sends a pair~$(i,j)$ to the pair~\hbox{$(i+j, i+j)$}. 

The group~$\pi_2{\Map_*}_{\Sigma_2}\!\left(\rmS^1 \times \rmS^1/\rmS^1, F\right)$ is abelian, because it is a~$\pi_2$, and by exactness of the sequence above, it surjects onto the kernel of the middle homomorphism, which obviously contains an infinitely generated free abelian group. Therefore, the group~$\pi_2{\Map_*}_{\Sigma_2}\!\left(\rmS^1 \times \rmS^1/\rmS^1, F\right)$ itself also contains an infinitely generated free abelian group.  As for the fundamental group~$\pi_1{\Map_*}_{\Sigma_2}\!\left(\rmS^1 \times \rmS^1/\rmS^1, F\right)$, we first note that the exact sequence implies that it is abelian as well, as the quotient of an abelian group. And the cokernel of the middle homomorphism, which is isomorphic to~$\pi_1{\Map_*}_{\Sigma_2}\!\left(\rmS^1 \times \rmS^1/\rmS^1, F\right)$, also contains an infinitely generated free abelian group. 
\end{proof}

\begin{corollary}\label{cor:kernels}
For~$j=1$ and~$j=2$, the kernel of the homomorphism
\[
\pi_j\Emb(\rmS^1,\rmS^3\times\rmS^1) \longrightarrow \pi_j\Imm(\rmS^1,\rmS^3\times\rmS^1)
\]
is abelian and contains an infinitely generated free abelian group. In particular, the kernels are not finitely generated.
\end{corollary}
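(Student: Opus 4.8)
The plan is to deduce the corollary from Proposition~\ref{prop:homotopy_fiber} by feeding the homotopy fiber sequence
\[
\widehat F\longrightarrow\Emb(\rmS^1,\rmS^3\times\rmS^1)\longrightarrow\Imm(\rmS^1,\rmS^3\times\rmS^1)
\]
into the associated long exact sequence of homotopy groups. Here $\widehat F$ denotes the homotopy fiber over a chosen basepoint of $\Emb(\rmS^1,\rmS^3\times\rmS^1)$; by Proposition~\ref{proposition: same fibers}, or already by Proposition~\ref{prop:homotopy_fiber}, its homotopy type is independent of that choice. For $j=1,2$ the segment
\[
\pi_{j+1}\Imm(\rmS^1,\rmS^3\times\rmS^1)\xrightarrow{\ \partial\ }\pi_j\widehat F\longrightarrow\pi_j\Emb(\rmS^1,\rmS^3\times\rmS^1)\longrightarrow\pi_j\Imm(\rmS^1,\rmS^3\times\rmS^1)
\]
identifies the kernel of the right-hand homomorphism with the image of $\pi_j\widehat F$ in $\pi_j\Emb(\rmS^1,\rmS^3\times\rmS^1)$, that is, with the quotient $\pi_j\widehat F/\partial\bigl(\pi_{j+1}\Imm(\rmS^1,\rmS^3\times\rmS^1)\bigr)$. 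Since Proposition~\ref{prop:homotopy_fiber} says that $\pi_j\widehat F$ is abelian for $j=1,2$, this quotient is abelian, which already gives the first assertion.

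For the lower bound I would first record that $\pi_{j+1}\Imm(\rmS^1,\rmS^3\times\rmS^1)$ is finitely generated for $j=1,2$. Indeed, via the equivalence $\Imm(\rmS^1,\rmS^3\times\rmS^1)\simeq\Lambda(\rmS^3\times\rmS^3\times\rmS^1)$ and the section of the evaluation fibration $\Omega X\to\Lambda X\to X$ furnished by the constant loops, $\pi_k\Lambda X$ is an extension of $\pi_kX$ by $\pi_{k+1}X$, and for $X=\rmS^3\times\rmS^3\times\rmS^1$ both of these groups are finitely generated in every degree. Consequently the subgroup $G:=\partial\bigl(\pi_{j+1}\Imm(\rmS^1,\rmS^3\times\rmS^1)\bigr)$ of $A:=\pi_j\widehat F$ is finitely generated, whereas $A$ contains a free abelian subgroup $B$ of infinite rank by Proposition~\ref{prop:homotopy_fiber}.

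The proof then reduces to the elementary fact that, for an abelian group $A$ containing a free abelian subgroup $B$ of infinite rank and a finitely generated subgroup $G\le A$, the quotient $A/G$ again contains a free abelian subgroup of infinite rank. To see this, note that $B\cap G$ is a subgroup of the finitely generated group $G$, hence finitely generated; fixing a basis $\{b_i\}_{i\in I}$ of $B$, the supports of a finite generating set of $B\cap G$ have union some finite set $S\subseteq I$, so $B\cap G\subseteq\bigoplus_{i\in S}\bbZ b_i$. Setting $B''=\bigoplus_{i\in I\setminus S}\bbZ b_i$, a free abelian group of infinite rank, we get $B''\cap G\subseteq(B\cap G)\cap B''=0$, so that $B''$ maps injectively to $A/G$. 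Applied with the $A$, $B$, $G$ above, this shows that the kernel of $\pi_j\Emb(\rmS^1,\rmS^3\times\rmS^1)\to\pi_j\Imm(\rmS^1,\rmS^3\times\rmS^1)$ contains an infinitely generated free abelian group; in particular it is not finitely generated, since a finitely generated abelian group is Noetherian and hence has no infinitely generated subgroups. The one step where I expect to need genuine care is the bookkeeping in this final lemma, together with a sentence making precise that the long exact sequence is taken over the relevant (and, by Proposition~\ref{proposition: same fibers}, immaterial) basepoint; everything else is a direct appeal to the propositions already established.
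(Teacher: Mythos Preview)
Your proposal is correct and follows exactly the deduction the paper intends: the corollary is stated without its own proof, and the paper has already recorded (just before Proposition~\ref{prop:homotopy_fiber}) that all homotopy groups of $\Imm(\rmS^1,\rmS^3\times\rmS^1)\simeq\Lambda(\rmS^3\times\rmS^3\times\rmS^1)$ are finitely generated, so the long exact sequence argument plus the elementary algebraic lemma you spell out is precisely what is meant. Your write-up simply makes explicit the details the paper leaves to the reader.
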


We have also seen that the group~$\pi_1\Emb(\rmS^1,\rmS^3\times\rmS^1)$ contains an infinitely generated free abelian group. We refer to the Budney's and Gabai's more recent preprint~\cite{Budney+Gabai} for more information on these fundamental groups. Our methods allow us to obtain information on higher homotopy groups as well:

\begin{corollary}\label{cor:pi_two}
The group~$\pi_2\Emb(\rmS^1,\rmS^3\times\rmS^1)$ contains an infinitely generated free abelian group. In particular, it is not finitely generated.
\end{corollary}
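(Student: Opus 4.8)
The plan is to read this off from Corollary~\ref{cor:kernels}. That corollary asserts, for $j=2$, that the kernel $K$ of the homomorphism $\pi_2\Emb(\rmS^1,\rmS^3\times\rmS^1)\to\pi_2\Imm(\rmS^1,\rmS^3\times\rmS^1)$ contains an infinitely generated free abelian group; since $K$ is a subgroup of $\pi_2\Emb(\rmS^1,\rmS^3\times\rmS^1)$, the latter contains such a group as well, and in particular is not finitely generated. So the only real work is to have the relevant instance of Corollary~\ref{cor:kernels} in place, and I would record its proof as follows.

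Write $\mathrm{Fib}$ for the homotopy fiber of the map $\Emb(\rmS^1,\rmS^3\times\rmS^1)\to\Imm(\rmS^1,\rmS^3\times\rmS^1)$ over a chosen basepoint. The long exact sequence of this fibration contains
\[
\pi_3\Imm(\rmS^1,\rmS^3\times\rmS^1)\xrightarrow{\ \partial\ }\pi_2\mathrm{Fib}\longrightarrow\pi_2\Emb(\rmS^1,\rmS^3\times\rmS^1),
\]
so that $K$ is isomorphic to $\pi_2\mathrm{Fib}/\operatorname{im}(\partial)$. By Proposition~\ref{prop:homotopy_fiber}, $\pi_2\mathrm{Fib}$ is abelian and contains an infinitely generated free abelian subgroup. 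I would next observe that $\operatorname{im}(\partial)$ is finitely generated: by the equivalence~\eqref{eq:product} we have $\Imm(\rmS^1,\rmS^3\times\rmS^1)\simeq\Lambda(\rmS^3\times\rmS^3\times\rmS^1)$, and since $\rmS^3\times\rmS^3\times\rmS^1$ is a Lie group there is an equivalence $\Lambda(\rmS^3\times\rmS^3\times\rmS^1)\simeq(\rmS^3\times\rmS^3\times\rmS^1)\times\Omega(\rmS^3\times\rmS^3\times\rmS^1)$, all of whose homotopy groups are finitely generated; hence so is $\pi_3\Imm(\rmS^1,\rmS^3\times\rmS^1)$, and so is its quotient $\operatorname{im}(\partial)$.

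It then remains to check the elementary algebraic fact that if an abelian group $A$ contains an infinitely generated free abelian subgroup $V$ and $B\leqslant A$ is finitely generated, then $A/B$ still contains one: indeed $V\cap B$ is finitely generated, hence supported on finitely many of the free generators of $V$, so one may split $V=V'\oplus V''$ with $V\cap B\subseteq V'$ and $V''$ infinitely generated free abelian, and then $V''\hookrightarrow V/(V\cap B)\cong(V+B)/B\hookrightarrow A/B$. Applying this with $A=\pi_2\mathrm{Fib}$ and $B=\operatorname{im}(\partial)$ yields Corollary~\ref{cor:kernels} for $j=2$, and hence the statement. The only place where care is needed is exactly this last step: quotienting a group that merely \emph{contains} an infinitely generated free abelian subgroup by an \emph{arbitrary} subgroup can collapse it entirely, so one genuinely uses that the obstruction group $\pi_3\Imm(\rmS^1,\rmS^3\times\rmS^1)$ — equivalently $\operatorname{im}(\partial)$ — is finitely generated. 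That, together with Proposition~\ref{prop:homotopy_fiber} and the long exact sequence, is the entire input.
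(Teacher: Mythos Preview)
Your proposal is correct and follows the paper's intended route: the paper deduces Corollary~\ref{cor:pi_two} immediately from Corollary~\ref{cor:kernels} (the kernel being a subgroup of $\pi_2\Emb$), and Corollary~\ref{cor:kernels} in turn from Proposition~\ref{prop:homotopy_fiber} together with the observation, stated just before it, that all homotopy groups of $\Imm(\rmS^1,\rmS^3\times\rmS^1)\simeq\Lambda(\rmS^3\times\rmS^3\times\rmS^1)$ are finitely generated. You have simply made explicit the long exact sequence step and the elementary algebra needed to pass from ``$\pi_2\mathrm{Fib}$ contains an infinitely generated free abelian group'' to the same statement for its quotient by a finitely generated subgroup; the paper leaves both of these implicit.
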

\end{example}

It would be interesting to see a calculation showing an example of a {\it simply-connected}~$4$--manifold~$N$ for which the map~$\Emb(\rmS^1, N)\to \Imm(\rmS^1, N)$ has a non-trivial kernel on~$\pi_1$.~(Moriya's recent preprint~\cite{Moriya} contains restrictions that apply.) It is easy to show that the homotopy fiber of the map~$\rmT_2\Emb(\rmS^1, N)\to \Imm(\rmS^1, N)$, and therefore also of the inclusion~\hbox{$\Emb(\rmS^1, N)\to \Imm(\rmS^1, N)$}, has non-trivial~$\pi_1$ for many manifolds~$N$, including simply-connected ones. But we have not analyzed the long exact sequence in homotopy in enough detail to show that the map from the homotopy fiber to~$\rmT_2\Emb(\rmS^1, N)$ is non-zero on~$\pi_1$ for some simply-connected~$N$.

%%%

\section*{Acknowledgments}

The authors thank the Isaac Newton Institute for Mathematical Sciences, Cambridge, for support and hospitality during the programme `Homotopy harnessing higher structures' where work on this paper was undertaken. This work was supported by~EPSRC grant no~EP/K032208/1. The first author was supported by the Swedish Research Council, grant number 2016-05440.

%%%

\parskip0pt

\parskip11pt

%%%

\vfill

Gregory Arone\\
Department of Mathematics\\
Stockholm University\\
SE-106 91 Stockholm\\
SWEDEN\\
\href{mailto:gregory.arone@math.su.se}{gregory.arone@math.su.se}

Markus Szymik\\
Department of Mathematical Sciences\\
NTNU Norwegian University of Science and Technology\\
7491 Trondheim\\
NORWAY\\
\href{mailto:markus.szymik@ntnu.no}{markus.szymik@ntnu.no}


\begin{thebibliography}{GKW03}

\bibitem[AK14]{AroneKankaanrinta} G. Arone and M. Kankaanrinta. On the functoriality of the blow-up construction. Bull. Belg. Math. Soc. Simon Stevin 17 (2010) 821--832.

\bibitem[Bud08]{Budney} R. Budney. A family of embedding spaces. Groups, homotopy and configuration spaces, 41--83.
Geom. Topol. Monogr. 13. Geom. Topol. Publ., Coventry, 2008. 

\bibitem[BG]{Budney+Gabai} R. Budney, D. Gabai. Knotted $3$--balls in $\rmS^4$. Preprint, arXiv:1912.09029.

\bibitem[Dax72]{Dax} J.-P. Dax. \'Etude homotopique des espaces de plongements. Ann. Sci. \'Ecole Norm. Sup. 5 (1972) 303--377.

\bibitem[DW59]{Dold+Whitney} A. Dold, H. Whitney. Classification of oriented sphere bundles over a~$4$--complex. Ann. of Math. 69 (1959) 667--677.

\bibitem[GKW01]{GoodwillieKleinWeiss01} T.G. Goodwillie, J.R. Klein, M.S. Weiss. Spaces of smooth embeddings, disjunction and surgery. Surveys on surgery theory, Vol. 2, 221--284, Ann. of Math. Stud. 149. Princeton Univ. Press, Princeton, NJ, 2001.

\bibitem[GKW03]{GoodwillieKleinWeiss03} T.G. Goodwillie, J.R. Klein, M.S. Weiss. A Haefliger style description of the embedding calculus tower. Topology 42 (2003) 509--524. 

\bibitem[GK15]{GoodwillieKlein15} T.G. Goodwillie, J.R. Klein. Multiple disjunction for spaces of smooth embeddings. J. Topol. 8 (2015) 651--674.

\bibitem[HH58]{Hirzebruch+Hopf} F. Hirzebruch, H. Hopf. Felder von Fl\"achenelementen in~$4$--dimensionalen Mannig\-fal\-tig\-keiten. Math. Ann. 136 (1958) 156--172. 

\bibitem[Las76]{Lashof} R. Lashof. Embedding spaces. Illinois J. Math. 20 (1976) 144--154. 

\bibitem[Mat76]{Mather} M. Mather. Pull-backs in homotopy theory. Canadian J. Math. 28 (1976) 225--263. 

\bibitem[Mil63]{Milnor:ICM} J. Milnor. Topological manifolds and smooth manifolds. 1963 Proc. ICM (Inst. Mittag-Leffler, Djurs\-holm, Stock\-holm, 1962) 132--138.

\bibitem[Mor]{Moriya} S. Moriya. Models for knot spaces and Atiyah duality. Preprint, arXiv:2003.03815.

\bibitem[Nas55]{Nash} J.  Nash. A path space and the Stiefel--Whitney classes. Proc. Nat. Acad. Sci. U.S.A. 41 (1955) 320--321.

\bibitem[Tho52]{Thom} R. Thom. Espaces fibr\'es en sph\`eres et carr\'es de Steenrod. Ann. Sci. Ecole Norm. Sup.69 (1952) 109--182.

\bibitem[Vir15]{Viro} O. Viro. Space of smooth~$1$--knots in a~$4$--manifold: is its algebraic topology sensitive to smooth structures? Arnold Math. J. 1 (2015) 83--89.

\bibitem[Wei96]{Weiss1996} M. Weiss. Calculus of embeddings. Bull. Amer. Math. Soc. 33 (1996) 177--187. 

\bibitem[Wei99]{Weiss1999} M. Weiss. Embeddings from the point of view of immersion theory. I. Geom. Topol. 3 (1999) 67--101. 

\end{thebibliography}
\end{document}